\numberwithin{equation}{section}
\numberwithin{figure}{section}
\theoremstyle{plain}
\newtheorem{thm}{\protect\theoremname}
  \theoremstyle{plain}
  \newtheorem{prop}[thm]{\protect\propositionname}
  \theoremstyle{remark}
  \newtheorem{rem}[thm]{\protect\remarkname}
  \theoremstyle{definition}
  \newtheorem{defn}[thm]{\protect\definitionname}
  \theoremstyle{plain}
  \newtheorem{lem}[thm]{\protect\lemmaname}
    \theoremstyle{definition}
  \newtheorem{example}[thm]{\protect\examplename}
\theoremstyle{remark}
  \providecommand{\definitionname}{Definition}
  \providecommand{\lemmaname}{Lemma}
  \providecommand{\propositionname}{Proposition}
  \providecommand{\remarkname}{Remark}
\providecommand{\theoremname}{Theorem}
\providecommand{\examplename}{Example}
\providecommand{\assertionname}{Assertion}
\author{Rudy Rosas}
\thanks{The author was supported by the Vicerrectorado the Investigaci\'on de la Pontificia Universidad Cat\'olica del Per\'u}
\email{rudy.rosas@pucp.pe}
\address{Pontificia Universidad Cat\'olica del Per\'u, Av Universitaria 1801, Lima, Per\'u.}
\begin{document}

\title{Nodal separators of holomorphic
foliations}

\maketitle

\selectlanguage{english}

\begin{abstract}
We study a special kind of local invariant sets of singular holomorphic foliations called nodal separators \cite{MM,camacho2013}. We define  notions of equisingularity and topological equivalence for nodal separators as intrinsic objects and,  in analogy with the celebrated theorem of Zariski for analytic curves, we prove the equivalence of these notions. We give some applications in the study of topological equivalences of holomorphic foliations. In particular, we show that the nodal singularities and its eigenvalues in the resolution of a generalized curve  are topological invariants.
\end{abstract}

\tableofcontents
\section{Introduction}
We consider a one-dimensional holomorphic foliation $\mathcal{F}$ on a complex smooth surface $V$, with an isolated singularity
at $p\in V$. In local coordinates $(\mathbb{C}^2,0)\simeq (V,p)$ the foliation is generated by a holomorphic vector field $Z$ with an isolated singularity at $0\in\mathbb{C}^2$.  The singularity at $p\in V$ is called {\it reduced} if the linear part of $Z$ has eigenvalues
$\lambda_{1},\lambda_{2}\in\mathbb{C}$ with $\lambda_{1}\neq0$ and such that
$\lambda=\frac{\lambda_{2}}{\lambda_{1}}$ is not a rational positive number.
This last number will be called the eigenvalue of the singularity $p\in V$. The singularity $p$ is {\it hyperbolic}  if
$\lambda\in\mathbb{C}\backslash\mathbb{R}$,
it is a {\it saddle} if $\lambda<0$, it is a {\it node} if $\lambda\in(0,\infty)\backslash \mathbb{Q}$, and it
is a {\it saddle-node} if $\lambda=0$. When the singularity of $\mathcal{F}$
at $p\in V$ is a node we have a particular kind of local invariant
sets:  In this case there are suitable local coordinates such that the foliation
near $p\in V$ is given by the holomorphic vector field
$x\frac{\partial}{\partial x}+\lambda y\frac{\partial}{\partial y}$
and we have the multi-valued first integral $yx^{-\lambda}$. Then
the closure of any leaf other than the separatrices is a set of
type $|y|=c|x|^{\lambda}$ ($c>0$) which is called a {nodal separator}
\cite{MM}. More precisely, we say that a set $S$ is a {\it nodal separator} for a node, if in
linearizing coordinates as above we have $S=\{(x,y):|y|=c|x|^{\lambda}\}\cap B$, $c>0$, where  $B$ is
an open ball centered at the singularity. Clearly $S$ is  invariant by the foliation restricted to $B$. In general, if the singularity at $p\in V$ is not necessarily reduced, we say that a set $S\subset V$ is a {\it nodal separator at p}
if there is a neighborhood $U$ of $p$ in $V$ such that the strict transform of $S\cap U$ in the resolution of $\mathcal{F}$ is a nodal separator for some node in the resolution.
The nodal separators and the separatrices are the minimal dynamical blocks at a singularity, as the  following theorem asserts \cite{camacho2013}.

\begin{thm}\label{cr}
Let $\mathcal{F}$ be a germ of holomorphic foliation with  an isolated singularity at $0\in\mathbb{C}^2$.
 Let $\mathcal{I} $ be a closed connected invariant set such that $\{0\}\subsetneq \mathcal{I}$. Then $\mathcal{I}$ contains either a separatrix
or a nodal separator at $0\in\mathbb{C}^2$. In particular,  if $L$ is  a local leaf of $\mathcal{F}$ such that $0 \in \overline{L}$, then $\overline{L}$ contains either a   separatrix or a nodal separator  at $0 \in \mathbb{C}^{2}$.
\end{thm}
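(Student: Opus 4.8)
\emph{Strategy of proof.} The plan is to pass to the reduction of singularities of $\mathcal{F}$ and thereby reduce the statement to a local analysis at reduced singularities together with a combinatorial chase along the exceptional divisor. Let $\pi\colon(\tilde{V},E)\to(\mathbb{C}^{2},0)$ be the reduction of singularities, so that $\tilde{\mathcal{F}}=\pi^{*}\mathcal{F}$ has only reduced singularities and $E=\pi^{-1}(0)$ is a normal crossings union of projective lines. Set $\tilde{\mathcal{I}}:=\overline{\pi^{-1}(\mathcal{I}\setminus\{0\})}$; this is a closed $\tilde{\mathcal{F}}$-invariant set and $\pi(\tilde{\mathcal{I}})=\mathcal{I}$. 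Because $\mathcal{I}$ is connected and strictly larger than $\{0\}$, the origin is accumulated by $\mathcal{I}\setminus\{0\}$, so $\tilde{\mathcal{I}}$ meets $E$; and since the points of $\pi^{-1}(\mathcal{I}\setminus\{0\})$ lie off $E$ while accumulating on it, $\tilde{\mathcal{I}}$ is not contained in $E$ and every point of $\tilde{\mathcal{I}}\cap E$ is non-isolated in $\tilde{\mathcal{I}}$. It is enough to produce inside $\tilde{\mathcal{I}}$ one of the following: a nodal separator for a node of $\tilde{\mathcal{F}}$; a leaf through a regular point of $\tilde{\mathcal{F}}$ lying on a non-invariant (dicritical) component of $E$; or a separatrix of $\tilde{\mathcal{F}}$ transverse to $E$. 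Each of these projects by $\pi$ to a nodal separator at $0$, respectively a separatrix at $0$, contained in $\mathcal{I}=\pi(\tilde{\mathcal{I}})$. In particular, if $\tilde{\mathcal{I}}$ meets a dicritical component at a regular point of $\tilde{\mathcal{F}}$ we are done, and from now on only the invariant components of $E$ matter.

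The second ingredient is the local picture at a reduced singularity $q$, read off from suitable normal forms (linearizing coordinates when $q$ is a node or a hyperbolic singularity, the Dulac normal form when $q$ is a saddle-node). The only germs of invariant curves through $q$ are its one or two separatrices. If $q$ is a saddle, every non-separatrix leaf $L_{c}$ has empty germ at $q$, since inside a small ball it stays at distance from $q$ bounded below by a positive power of $|c|$; hence a germ of closed invariant set strictly larger than $\{q\}$ must contain a separatrix of $q$, and if it strictly contains one separatrix it contains the other as well. If $q$ is a hyperbolic singularity, the closure of every non-separatrix leaf contains both separatrices. If $q$ is a node, the multivalued first integral $|y|\,|x|^{-\lambda}$ shows that the closure of every non-separatrix leaf through a neighbourhood of $q$ is a nodal separator. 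If $q$ is a saddle-node, the closure of every non-separatrix leaf accumulating at $q$ contains the strong analytic separatrix of $q$.

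With these in hand I would run the chase. The set $\Lambda:=\tilde{\mathcal{I}}\cap E$ is non-empty and compact, and at each of its points the germ of $\tilde{\mathcal{I}}$ is strictly larger than that point. Take $y\in\Lambda$. If $y$ is a regular point of $\tilde{\mathcal{F}}$ on an invariant component $D$, then the leaf through $y$ is $D$ minus the singular set of $\tilde{\mathcal{F}}$, so $D\subseteq\tilde{\mathcal{I}}$ and I pass to the singularities lying on $D$. If $y$ is a singularity, the local picture applies to the germ of $\tilde{\mathcal{I}}$ at $y$: when $y$ is a node we get a nodal separator in $\tilde{\mathcal{I}}$ and finish; when $y$ is a saddle, a hyperbolic singularity, or a saddle-node, we get a separatrix of $y$ inside $\tilde{\mathcal{I}}$, and either it is transverse to $E$ and we finish, or it is one of the invariant components of $E$ through $y$ and the chase proceeds with that component. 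So each step either completes the proof or adjoins a new invariant component of $E$ to $\tilde{\mathcal{I}}$.

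The remaining and crucial point is that the chase terminates; I expect this to be the main obstacle. Since the dual graph of $E$ is a finite tree, a non-terminating chase would force $\tilde{\mathcal{I}}$ to contain a whole subtree $E'\subseteq E$ and to coincide near each singularity of $E'$ with $E'$ itself, all those singularities being corner saddles, hyperbolic corners, or saddle-nodes along their strong separatrix, with no node among them and no separatrix transverse to $E$ ever reached. But $\tilde{\mathcal{I}}$ is not contained in $E$, so it carries a leaf off $E$ whose accumulation on $E$ lies in $E'$. The task is to rule this out: one must show that a leaf off $E$ that stays asymptotically near such a subtree is necessarily absorbed by a node — yielding a nodal separator — or ejected along a separatrix transverse to $E$. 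This is where the tree structure of the resolution, the Camacho--Sad index theorem along the components of $E'$ (recall that the index along the strong separatrix of a saddle-node vanishes), and a direct examination of the holonomy along the chains of saddles and saddle-nodes must be combined. Granting this, the dichotomy follows, and the ``in particular'' clause is the special case $\mathcal{I}=\overline{L}$.
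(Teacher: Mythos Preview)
The paper does not contain a proof of this theorem. Theorem~\ref{cr} is quoted from the reference \cite{camacho2013} (Camacho--Rosas, \emph{Invariant sets near singularities of holomorphic foliations}) and is used in the present paper only as an input, for instance in the proof of Theorem~\ref{invariancia}. So there is no ``paper's own proof'' to compare against; any assessment has to be on the merits of your sketch alone.

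Your strategy --- pass to the reduction of singularities, analyse the germ of a closed invariant set at each type of reduced singularity, and propagate along the invariant components of the exceptional divisor --- is exactly the right framework, and indeed it is the framework of \cite{camacho2013}. Your local descriptions are accurate: at a saddle no non-separatrix leaf approaches the singularity; at a hyperbolic singularity every non-separatrix leaf accumulates on both separatrices; at a node the closure of a non-separatrix leaf is a nodal separator; at a saddle-node the strong separatrix is always in the closure of a leaf that approaches the singularity.

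Where your sketch stops being a proof is precisely where you say it does: the termination of the chase. You correctly identify that the residual case is a closed invariant set containing a subtree $E'$ of invariant components whose mutual intersections are corner saddles, hyperbolic corners, or saddle-nodes with strong separatrix inside $E'$, together with leaves off $E$ accumulating only on $E'$. Ruling this out is the substance of the Camacho--Rosas argument; it is not a one-line consequence of the Camacho--Sad index formula or of the tree shape of the dual graph. The actual argument requires a careful control of how a leaf that enters a neighbourhood of $E'$ moves from one component to the next (via the local holonomies at the corners), and shows that after finitely many such transitions the leaf must either escape the neighbourhood, hit a node, or be forced onto a transverse separatrix. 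Your invocation of ``Camacho--Sad plus holonomy along chains'' is in the right spirit but is not yet an argument: you have not explained which quantity is monotone along the chase, nor why the saddle-node corners with strong separatrix in $E'$ cannot trap the leaf indefinitely. Until that mechanism is supplied, the proof has a genuine gap at its most delicate point.
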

In this paper, we study some properties of  nodal separators at $(\mathbb{C}^2,0)$ as intrinsic objects, that is,  not necessarily linked to a holomorphic foliation at  $(\mathbb{C}^2,0)$. The nodal separators have a good behavior under complex blow ups:  these object has well defined iterated tangents and so, in an infinitesimal viewpoint, they look like curves, although the information given by the sequence of infinitely near points in the case of nodal separators is essentially infinite. However, in analogy with the case of curves, in Section \ref{nodal separators} we establish the concept of equisingularity for nodal separators. On the other hand, also in Section \ref{nodal separators} we give a notion of topological equivalence for nodal separators: roughly speaking, we say that two nodal separators $S$ and $S'$ at $(\mathbb{C}^2,0)$ are topologically equivalent if there is a local homeomorphism of  the ambient space  taking  $S$ to $S'$ and preserving the ``Levi foliations'' defined on $S$ and $S'$.  
The following theorem, which is one of the main results of this work, is analogous to a well known theorem for curves due to Zariski \cite{zariski}.
\begin{thm}\label{Zariski}  Two nodal separators are equisingular if and only if they are topologically equivalent.
\end{thm}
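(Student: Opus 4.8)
The plan is to establish the two implications of Theorem~\ref{Zariski} separately, using as a common bridge the resolution $\pi\colon(M,D)\to(\mathbb{C}^{2},0)$ of a nodal separator together with the restriction of its Levi foliation to a small link. Recall from Section~\ref{nodal separators} that $S$ comes with a canonical finite sequence of blow-ups after which the strict transform $\widetilde S$ is a model nodal separator $\{|v|=c\,|u|^{\lambda}\}$ at a point $q$ of the exceptional divisor $D$, with well-defined eigenvalue $\lambda\in(0,\infty)\setminus\mathbb{Q}$; that the two ``separatrices'' $\{u=0\},\{v=0\}$ of the model blow down to two analytic curves (the separatrices of $S$); and that equisingularity of $S$ and $S'$ means that the towers of infinitely near points agree, equivalently that the dual trees of the minimal resolutions coincide and the terminal eigenvalues agree, it being understood that interchanging the two ends of $S$ replaces $\lambda$ by $1/\lambda$. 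Since $\lambda$ is irrational, the leaves of a model Levi foliation are dense, so $S$ already carries a rigid transverse structure, and it is this rigidity — not any ``missing'' boundary of $S$ — that will carry the eigenvalue.

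For the implication ``equisingular $\Rightarrow$ topologically equivalent'' I would proceed by an explicit construction, building the homeomorphism from the top of the resolution tower downward. At the model point the two strict transforms are $\{|v|=c\,|u|^{\lambda}\}$ and $\{|v|=c'\,|u|^{\lambda}\}$ in suitable coordinates — after, if the matched combinatorics so requires, exchanging $u\leftrightarrow v$ on one side so that the eigenvalues become literally equal — and the biholomorphism $(u,v)\mapsto(u,(c'/c)v)$ carries one to the other, fixes the local divisor, and preserves the nodal foliation, hence the Levi foliation. This initialises a germ of homeomorphism near the terminal node; since the two resolutions have the same dual tree, one then descends one blow-up at a time, at each step extending the homeomorphism over the relevant exceptional $\mathbb{P}^{1}$ (mapping it onto its counterpart) and then pushing it down past that blow-up, which is legitimate because collapsing those matched $\mathbb{P}^{1}$'s is the only identification involved. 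The outcome is a homeomorphism $\Phi\colon(M,D)\to(M',D')$ compatible with $\pi$ and $\pi'$, hence a homeomorphism $h\colon(\mathbb{C}^{2},0)\to(\mathbb{C}^{2},0)$ with $h(S)=S'$ intertwining the Levi foliations. The only care needed is to perform each extension in collars so that the result remains a homeomorphism compatible with the nested exceptional fibres and with the Levi foliation; this is routine.

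The substance is the converse. Let $h$ be a topological equivalence, so $h(S)=S'$ and $h$ intertwines the Levi foliations; consequently $h(\overline S)=\overline{S'}$. I would extract the equisingularity data in two layers. The \emph{combinatorial layer}: for $\varepsilon$ small the link $S\cap\mathbb{S}^{3}_{\varepsilon}$ is a torus $T$, embedded in $\mathbb{S}^{3}_{\varepsilon}$ by iterated thickenings that record the dual tree of $\pi$ — the analogue, for nodal separators, of the iterated-torus-knot description of a curve link that underlies the knot-theoretic proof of Zariski's theorem for curves \cite{zariski} — and $h$ carries this embedded torus, up to isotopy, to the corresponding one, so that by rigidity of such graph-manifold pairs the dual trees of $\pi$ and $\pi'$ are matched; this pins down all of the equisingularity data except the terminal eigenvalue. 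The \emph{transverse layer}: having matched the combinatorics, one compares the terminal nodes. The Levi foliation restricts on $T$ to a linear (Kronecker) foliation, and its slope, read in the canonical framing of $T$ (the meridians associated with the two divisor components through $q$ after resolution, equivalently with the two separatrices of $S$), equals $\lambda$. Since $h$ carries $T$ to the corresponding torus $T'$, respects that framing, and conjugates the Levi foliations, it conjugates the two Kronecker foliations compatibly with their framings; hence $\lambda=\lambda'$, or $\lambda=1/\lambda'$ when $h$ interchanges the two ends of $S$. Together with the combinatorial layer this is exactly equisingularity of $S$ and $S'$.

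I expect the main obstacle to be making the passage from $h$ to these two layers rigorous: $h$ is a priori a wild homeomorphism, and one must show it nonetheless respects the iterated-torus structure of the link and, at the deepest level, the terminal torus with its canonical framing — that is, that it cannot shuffle the self-similar nodal structure so as to alter the eigenvalue. The leverage comes from combining the control on the embedded link-pair (the counterpart of how, for curves, the link determines the Puiseux data) with the rigidity of the minimal Levi foliation (dense leaves, hence a unique transverse structure), presumably organised as a step-by-step lift of $h$ to homeomorphisms between neighbourhoods of the exceptional divisors compatible with the divisor stratification, followed by an appeal to the rigidity of linear foliations on the torus. Once this is secured, the combinatorial bookkeeping and the whole forward direction are comparatively soft.
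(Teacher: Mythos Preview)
Your forward direction is essentially the paper's: build the equivalence at the terminal node and push it down through the tower.

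For the converse you take a genuinely different route, and there is a real gap. First, a point of formulation: a nodal separator has no \emph{finite} resolution --- blowing up a node yields another node --- so there is no ``dual tree of the minimal resolution plus terminal eigenvalue'' to speak of. The equisingularity data is the full infinite sequence of proximities, which by Proposition~\ref{caso lineal} is encoded by the continued fraction of the eigenvalue at any fixed level. Your ``combinatorial layer'' (an iterated--torus picture of the link encoding a finite tree) is therefore modelled on the wrong object.

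More seriously, your ``transverse layer'' needs exactly the statement that $\mathfrak h$ respects the canonical framing on the terminal torus --- equivalently, that the induced $SL(2,\mathbb Z)$ matrix on $H_1$ is $\pm\mathrm{id}$ --- and you offer no mechanism for this beyond a hoped--for ``step--by--step lift of $h$ through the resolution''. That lift is precisely what is not available: after pulling back, $\mathfrak h$ lives only on the complement of the exceptional divisor, and extending it across even one component is essentially what you are trying to prove. The paper confronts this directly: Lemma~\ref{ass} and Proposition~\ref{nice} show only that $\tilde\lambda=(c+d\lambda)/(a+b\lambda)$ for \emph{some} $\begin{pmatrix}a&b\\c&d\end{pmatrix}\in SL(2,\mathbb Z)$, and the determination of this matrix as $\pm\mathrm{id}$ (Proposition~\ref{best}) is obtained only \emph{a posteriori}, as a corollary of the equisingularity.

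The paper's key idea, which your proposal misses, is to sidestep the framing question entirely by \emph{approximating the nodal separator by curves} (Proposition~\ref{aproximacion por curvas}). After straightening $\mathfrak h$ on $S$ to the explicit monomial form of Proposition~\ref{nice}, one replaces the irrational exponent $\lambda$ by a nearby rational $n/m$; the straightened $\mathfrak h$ then carries the curve $\{(z^m,z^n)\}$ at $p_k$ to a curve $\{(\mu_0 z^{\tilde m},\nu_0 z^{\tilde n})\}$ at $\tilde p_k$, and these curves share the first $k$ infinitely near points with $S$ and $\widetilde S$. Now Zariski's theorem for \emph{curves} forces the equisingularity of these approximants, and letting $k\to\infty$ gives the equisingularity of $S$ and $\widetilde S$. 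The curve theorem thus supplies exactly the rigidity of the embedded link that your argument would need to establish from scratch.
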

The proof of this theorem is organized as follows. In Section \ref{eq-top} we prove the first part of Theorem \ref{Zariski}: equisingularity implies topological equivalence. In Section \ref{top-eq} we reduce the second part of Theorem \ref{Zariski} to  Proposition \ref{aproximacion por curvas}.  We begin the proof of Proposition \ref{aproximacion por curvas} in section \ref{better} with the construction of a ``nice'' topological equivalence (Proposition \ref{nice}). Finally, we end the proof of Proposition \ref{aproximacion por curvas} in Section \ref{proofAC}.

In the context of holomorphic foliations   at $(\mathbb{C}^2,0)$, in Section \ref{fol1} we prove the following theorem.
\begin{thm}\label{bijection} Let  $\mathcal{F}$ and $\widetilde{\mathcal{F}}$ be holomorphic foliations with isolated singularities at $0\in\mathbb{C}^2$. Let $\mathfrak{h}:{\mathfrak{U}}\rightarrow\widetilde{{\mathfrak{U}}}$, $\mathfrak{h}(0)=0$  be a topological equivalence between  $\mathcal{F}$ and $\widetilde{\mathcal{F}}$. Then there is a bijection $\mathfrak{h}_*$ between the set $\mathfrak{N}$ of nodes in the resolution of $\mathcal{F}$ with the set  $\widetilde{\mathfrak{N}} $ of nodes in the resolution of $\widetilde{\mathcal{F}}$ such that: the nodal separators issuing from a node $\mathfrak{n}\in \mathfrak{N}$ are mapped to the nodal separators issuing from the node $\mathfrak{h}_*(\mathfrak{n})\in\widetilde{\mathfrak{N}}$. In particular, the number of nodes in the resolution of a foliation is a topological invariant.
\end{thm}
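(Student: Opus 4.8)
The plan is to give a purely dynamical and topological description of the nodal separators at $0$, so that the equivalence $\mathfrak h$ is forced to permute them, and then to recover the partition ``issuing from a common node'' from the topology of the space of all such separators. To begin, since $\mathfrak h$ is a topological equivalence with $\mathfrak h(0)=0$, it carries leaves of $\mathcal F|_{\mathfrak U}$ to leaves of $\widetilde{\mathcal F}|_{\widetilde{\mathfrak U}}$; hence it carries closed connected invariant sets of $\mathcal F$ containing $0$ to closed connected invariant sets of $\widetilde{\mathcal F}$ containing $0$, it preserves inclusions among them and, being a homeomorphism, it preserves topological dimension. In particular it maps the closed connected invariant sets that are \emph{minimal} among those strictly containing $\{0\}$ to sets of the same kind.

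The key step is the following characterization: a set $S\subset\mathfrak U$ is a nodal separator at $0$ for $\mathcal F$ if and only if $S$ is a closed connected invariant set, minimal among those with $\{0\}\subsetneq S$, and of topological dimension $3$. For the ``only if'' direction, in linearizing coordinates at the node attached to $S$ the strict transform of $S$ is a cone over a $2$-torus, and the resolution map is a homeomorphism off the exceptional divisor, so $S$ itself is homeomorphic to a cone over a $2$-torus and has dimension $3$; moreover every leaf contained in $S$ accumulates at $0$ and, because the node's eigenvalue is irrational, is dense in $S$, so the only closed connected invariant subsets of $S$ are $\{0\}$ and $S$, i.e.\ $S$ is minimal. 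For the ``if'' direction, let $S$ be minimal, closed, connected, invariant, with $\{0\}\subsetneq S$ and $\dim S=3$. By Theorem \ref{cr}, $S$ contains a separatrix or a nodal separator at $0$. A separatrix is a germ of analytic curve, hence of dimension $2$, and it is a closed connected invariant set lying strictly between $\{0\}$ and $S$, which minimality forbids; so $S$ contains a nodal separator $S'$, and $\{0\}\subsetneq S'\subseteq S$ together with minimality gives $S'=S$. Since ``closed'', ``connected'', ``invariant'', ``minimal'' and ``topological dimension $3$'' are all preserved by $\mathfrak h$ and by $\mathfrak h^{-1}$, the map $\mathfrak h$ restricts to a bijection between the set $\mathcal S$ of nodal separators of $\mathcal F$ at $0$ and the analogous set $\widetilde{\mathcal S}$ for $\widetilde{\mathcal F}$.

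Next I would study the topology of $\mathcal S$ under the Hausdorff distance between compact subsets of a fixed small closed ball around $0$. For a node $\mathfrak n\in\mathfrak N$ with eigenvalue $\lambda$, the nodal separators issuing from $\mathfrak n$ are, in linearizing coordinates at $\mathfrak n$, the images under the resolution map of the sets $\{|v|=c|u|^{\lambda}\}$, $c>0$; the map $c\mapsto S_c$ is continuous, so these form a single arc in $\mathcal S$. Conversely, along a continuous path $t\mapsto S^{(t)}$ in $\mathcal S$ one looks at the strict transforms $\widehat{S^{(t)}}$ in the resolution of $\mathcal F$: fixing a closed annular region around $0$ over which the resolution map is a diffeomorphism, the compact pieces of $\widehat{S^{(t)}}$ in that region vary continuously, each lies near exactly one node $\mathfrak n_t$, and near $\mathfrak n_t$ we have $\widehat{S^{(t)}}=\{|v|=c(t)|u|^{\lambda}\}$ for a positive $c(t)$. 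Were $\mathfrak n_t$ not locally constant, at a jump the limiting set would have to be at the same time a nodal separator at one node and the limit of sets $\{|v|=c(t)|u|^{\lambda}\}$ at a different node with $c(t)\to 0$ or $c(t)\to\infty$, hence a union of separatrices of that node and not a nodal separator --- contradicting that the path stays in $\mathcal S$. Thus $\mathfrak n_t$ is constant along paths, the connected components of $\mathcal S$ are exactly the arcs $\{S_c:c>0\}$, one per node, and $\pi_0(\mathcal S)$ is canonically identified with $\mathfrak N$; likewise $\pi_0(\widetilde{\mathcal S})$ with $\widetilde{\mathfrak N}$.

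Finally, $\mathfrak h:\mathcal S\to\widetilde{\mathcal S}$ is a bijection and both it and its inverse send Hausdorff-continuous families to Hausdorff-continuous families, being restrictions of the ambient homeomorphism; hence $\mathfrak h$ is a homeomorphism of $\mathcal S$ onto $\widetilde{\mathcal S}$ and induces a bijection on connected components, that is, a bijection $\mathfrak h_*:\mathfrak N\to\widetilde{\mathfrak N}$ which by construction sends the family of nodal separators issuing from $\mathfrak n$ onto the family issuing from $\mathfrak h_*(\mathfrak n)$; the ``in particular'' statement follows at once. (The sharper assertion that $\mathfrak n$ and $\mathfrak h_*(\mathfrak n)$ have the same eigenvalue, alluded to in the abstract, would then come from feeding a single pair of corresponding nodal separators into Theorem \ref{Zariski}, but it is not needed here.) I expect the main obstacle to be the third step: making rigorous the continuity of the strict-transform operation on Hausdorff-convergent families of nodal separators and controlling the degenerations, so as to establish that $\mathcal S$ is, topologically, the disjoint union of one arc per node --- equivalently, that the families belonging to two distinct nodes cannot be joined by a continuous path inside $\mathcal S$.
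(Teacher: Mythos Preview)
Your proof is correct and follows essentially the same route as the paper: Theorem~\ref{cr} plus minimality/leaf-density to show that $\mathfrak h$ carries nodal separators to nodal separators (this is the paper's Theorem~\ref{invariancia}, and your minimality characterization is just a clean repackaging of that argument), followed by a connectedness argument (the paper's Theorem~\ref{parejas de nodos}) to obtain the bijection on nodes. Regarding the obstacle you anticipate in the third step, the paper sidesteps it entirely: one only needs that the family issuing from a fixed node $\mathfrak n$ is connected (it is parametrized by $c>0$) and that the assignment $S\mapsto(\text{node of }\mathfrak h(S))$ is locally constant along that family, which together with the same reasoning applied to $\mathfrak h^{-1}$ already gives the bijection---so the global component structure of $\mathcal S$, and in particular the delicate claim that separators attached to distinct nodes cannot be joined by a path in $\mathcal S$, is never required.
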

Observe that this theorem does not need any hypothesis on the foliations. In particular, the foliations could have saddle-nodes in its resolutions, so Theorem  \ref{bijection} is really new outside the class of generalized curves \cite{CSL}. In the case of \emph{Generic General Type} foliations,  Theorem \ref{bijection} is a consequence of the work of Mar\'in and Mattei \cite{MM} --- Generic General Type foliations are generalized curves  with an additional generic dynamical property which guarantees that the conjugation $\mathfrak{h}$ is transversely holomorphic ---. In fact, in \cite{MM} the authors prove much more: if $\mathcal{F}$ is of Generic General Type and $\widetilde{\mathcal{F}}$ is any foliation topologically equivalent to $\mathcal{F}$, then there exists a topological equivalence between  $\mathcal{F}$ and $\widetilde{\mathcal{F}}$  extending to the exceptional divisor after the resolutions of $\mathcal{F}$ and $\widetilde{\mathcal{F}}$. On the other hand, if $\mathcal{F}$ is a generalized curve  not necessarily of Generic General Type, in \cite{rosas3} is proved that always exists a topological equivalence between $\mathcal{F}$ and $\widetilde{\mathcal{F}}$  extending after resolution to a neighborhood of each linearizable or resonant singularity which is not a corner. In particular,
this topological equivalence extends to each nodal singularity which is not a corner. The goal of the last theorem of this paper, proved in Section \ref{fol1},  is to construct a topological equivalence extending also to the nodal singularities in the corners of the resolution:  

\begin{thm}\label{nodalext} Let $\mathcal{F}$ and $\widetilde{\mathcal{F}}$ be topological equivalent holomorphic foliations at  $(\mathbb{C}^2,0)$. Suppose that $\mathcal{F}$ is a generalized curve. Then there exists a topological equivalence between $\mathcal{F}$ and $\widetilde{\mathcal{F}}$ which, after resolution, extends as a homeomorphism to a neighborhood of each linearizable or resonant non-corner singularity and each nodal corner singularity. In particular, the eigenvalue of each nodal singularity in the resolution of $\mathcal{F}$ is a topological invariant.
\end{thm}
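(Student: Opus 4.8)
The plan is to start from the topological equivalence $\mathfrak{h}:\mathfrak{U}\to\widetilde{\mathfrak{U}}$ provided by \cite{rosas3}, which already extends after resolution to a neighborhood of each non-corner linearizable or resonant singularity; in particular it extends near every non-corner nodal singularity. By Theorem \ref{bijection} we have the bijection $\mathfrak{h}_*$ between the set $\mathfrak{N}$ of nodes of the resolution of $\mathcal{F}$ and the set $\widetilde{\mathfrak{N}}$ of nodes of the resolution of $\widetilde{\mathcal{F}}$, matching nodal separators with nodal separators. The task is then purely local: at each corner node $\mathfrak{n}\in\mathfrak{N}$ we must modify $\mathfrak{h}$ inside a small bidisc around $\mathfrak{n}$ so that it becomes a homeomorphism there conjugating the two linearized nodes, while leaving it unchanged outside a slightly larger bidisc, so that the global modifications (finitely many, one per corner node) can be pasted together.

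First I would fix a corner node $\mathfrak{n}$ and its image $\widetilde{\mathfrak{n}}=\mathfrak{h}_*(\mathfrak{n})$, which by Theorem \ref{bijection} is again a corner node (the corner/non-corner distinction is read off from whether both separatrices lie in the exceptional divisor, and $\mathfrak{h}$ respects the divisor). The two local separatrices of $\mathfrak{n}$, carried in the exceptional divisor, are sent by $\mathfrak{h}$ to the two local separatrices of $\widetilde{\mathfrak{n}}$; moreover the family of nodal separators $\{|y|=c|x|^{\lambda}\}$ issuing from $\mathfrak{n}$ is carried to the family $\{|\tilde y|=c|\tilde x|^{\tilde\lambda}\}$ issuing from $\widetilde{\mathfrak{n}}$. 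Each individual nodal separator is an intrinsic object with its Levi foliation, so the hypotheses of Theorem \ref{Zariski} are in force: the two nodes are topologically equivalent nodal separators, hence \emph{equisingular}, which by the definition of equisingularity in Section \ref{nodal separators} forces $\lambda=\tilde\lambda$. This is the step that delivers the ``in particular'' clause about the eigenvalue being a topological invariant; the point to check carefully is that the topological equivalence of the \emph{ambient foliations} indeed induces a topological equivalence of the \emph{nodal separators as intrinsic objects} — i.e. that $\mathfrak{h}$ (suitably restricted) preserves the Levi foliations — which follows because the Levi foliation on a nodal separator $S$ of a node is exactly the foliation of $S$ by closures of leaves of $\mathcal{F}$, an intrinsic dynamical datum preserved by any topological equivalence.

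Next, knowing $\lambda=\tilde\lambda$ and choosing linearizing coordinates $(x,y)$ at $\mathfrak{n}$ and $(\tilde x,\tilde y)$ at $\widetilde{\mathfrak{n}}$, I would build by hand a local homeomorphism of the bidisc conjugating $x\frac{\partial}{\partial x}+\lambda y\frac{\partial}{\partial y}$ to $\tilde x\frac{\partial}{\partial\tilde x}+\lambda\tilde y\frac{\partial}{\partial\tilde y}$ that agrees with the given $\mathfrak{h}$ on the boundary sphere of the bidisc and on the two axes. The model here is the standard ``radial'' construction: on each axis $\mathfrak{h}$ is already a homeomorphism (it extends over the non-corner part and matches separatrices), and the foliation off the axes is a product of the nodal separators $|y|=c|x|^{\lambda}$ with the Levi circles; one extends the boundary data by interpolating along the one-parameter family of nodal separators (the parameter $c$) and along the leaves (the ``angular'' direction), exactly as one does to prove equisingularity implies topological equivalence in Section \ref{eq-top}. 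Concretely I expect to invoke, or re-run, the construction behind the first half of Theorem \ref{Zariski} but now \emph{rel boundary}, i.e. with prescribed boundary values coming from $\mathfrak{h}$; the compatibility on the two axes and on the sphere is what makes the glued map continuous.

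The main obstacle is this gluing/extension-rel-boundary step: the boundary homeomorphism on $\partial(\text{bidisc})$ induced by $\mathfrak{h}$ is a priori an arbitrary homeomorphism compatible with the Levi/Hopf structure only up to isotopy, and one must verify that it is isotopic, through structure-preserving homeomorphisms, to the ``model'' boundary map of the linear conjugation, so that the two can be interpolated across a collar without destroying continuity where the modification meets the unmodified $\mathfrak{h}$. This is where the corner geometry bites: the bidisc boundary is not a sphere but $S^3$ decomposed by the two divisor components into two solid tori meeting along a torus, and on that torus $\mathfrak{h}$ must respect the two holonomy pencils; controlling the mapping-class data there, and checking it is unobstructed because the node is linearizable (so the holonomy is a rotation and the relevant homeomorphism groups are connected), is the crux. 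Once that isotopy is in hand, performing finitely many such surgeries — one per nodal corner, on pairwise disjoint bidiscs — and leaving $\mathfrak{h}$ untouched elsewhere yields the desired global topological equivalence extending over all the listed singularities, and the equality $\lambda=\tilde\lambda$ established along the way gives the topological invariance of the nodal eigenvalues.
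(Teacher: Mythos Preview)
Your overall architecture matches the paper's: start from the equivalence of \cite{rosas3}, pair corner nodes via Theorem~\ref{bijection}, obtain $\lambda=\tilde\lambda$ from Theorem~\ref{Zariski}, and modify $\mathfrak{h}$ locally at each corner node. However, the local step has a genuine gap.

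You assert that ``on each axis $\mathfrak{h}$ is already a homeomorphism (it extends over the non-corner part and matches separatrices)''. At a \emph{corner} node the two axes are components of the exceptional divisor, and the lifted $\mathfrak{h}$ is defined only off the divisor; extending it there is precisely what is to be proved, so this cannot be used as input. The paper does not assume this. Instead it runs the construction of \cite{rosas3} \emph{twice}, once for each divisor component $\mathcal{E}_1,\mathcal{E}_2$ through the node (each time treating the other component as if it were a transverse separatrix), producing two homeomorphisms $\mathfrak{h}_1,\mathfrak{h}_2$ that agree with $\mathfrak{h}$ away from the node and that send solid-torus boundary pieces to solid-torus boundary pieces.

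More importantly, your resolution of the torus mapping-class obstruction---``the node is linearizable, so the relevant homeomorphism groups are connected''---is false. The group of homeomorphisms of the $2$-torus preserving a linear foliation of irrational slope $\lambda$ has $\pi_0$ identified with the set of $A\in SL(2,\mathbb Z)$ fixing the line $\mathbb R(1,\lambda)$; for quadratic irrational $\lambda$ (e.g.\ the golden mean) this is infinite, so linearizability alone removes nothing. What the paper actually uses is Proposition~\ref{best}, a refinement of Theorem~\ref{Zariski} asserting that an \emph{ambient} topological equivalence of nodal separators forces $\mathfrak{h}^*=\pm\mathrm{id}$ on $H_1$; the proof goes through the approximating curves of Proposition~\ref{aproximacion por curvas} and Zariski equisingularity of curves, not through any connectedness argument. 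In \cite{rosas3} the $\pm\mathrm{id}$ statement was the sole place where a genuine separatrix was needed, and Proposition~\ref{best} is exactly its substitute at a corner. With $\mathfrak{h}^*=\pm\mathrm{id}$ in hand, $\mathfrak{h}_1$ and $\mathfrak{h}_2$ induce the same homology map on the intermediate tori $T_s$, and via Lemma~\ref{ass} can be joined by a continuous family of foliation-preserving torus homeomorphisms; the resulting boundary map on $\partial D$ is then coned inward using the conical structure of the node. Your proposal gestures at this obstruction but does not supply the tool that kills it.
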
 
 A key step in the proof of this theorem  is to establish a correspondence, after resolution,  between the singularities of $\mathcal{F}$ and  $\widetilde{\mathcal{F}}$. When a singularity $p$ in the resolution of $\mathcal{F}$ is not a corner, we can use the separatrix issuing from $p$ to  define the corresponding singularity $\tilde{p}$ in the resolution of $\widetilde{\mathcal{F}}$. Moreover, By Zariski's Theorem \cite{zariski},  the singularities $p$ and $\tilde{p}$ are in ``isomorphic positions'' in their corresponding exceptional divisors. If the singularity $p$ is a corner, we have no separatrix issuing from $p$ and this is the main difficulty when we deal with corner singularities  --- recall that  $\mathcal{F}$ is not necessarily of Generic General Type, so the techniques of \cite{MM} does not work ---. However, if the corner singularity $p$ is a node, we can overcome this difficulty by using a nodal separator issuing from $p$ and Theorem \ref{bijection} to define the singularity $\tilde{p}$  corresponding to $p$ in the resolution of $\widetilde{\mathcal{F}}$. Moreover,  Theorem \ref{Zariski}  guarantees that  $p$ and $\tilde{p}$ are in ``isomorphic positions'' in  their corresponding exceptional divisors. From this point  the construction of a topological equivalence extending to $p$ follows some ideas already used in \cite{rosas3}.

\section{Nodal separators}\label{nodal separators}

Let $V$ be a complex surface and let $p\in V$ be a  regular point. 

\begin{defn}  A set $S\subset V$ will be called a nodal separator at $p\in V$ if there exist
\begin{enumerate}
\item a complex surface $M$;
\item a map $\pi:M\rightarrow V$, which is a finite composition of blow ups at points equal or infinitely near to $p\in V$; and
\item a germ of nodal foliation $\mathcal{F}$ at some point $q\in\pi^{-1}(p)$ 
\end{enumerate} such that the strict transform of $S$ by $\pi$ is a nodal separator of $\mathcal{F}$ at  $q\in M$. By simplicity, we will denote the strict transform of $S$ by $\pi$ also by $S$, so we can say that $S$  is a nodal separator of $\mathcal{F}$ at  $q\in M$.
\end{defn}

\begin{rem}\label{corner}
In the definition above, by performing additional blow ups at $q$ if necessary, we can assume the following additional properties:
\begin{enumerate}\item the point $q$ is the intersection of two irreducible components $E_1$ and $E_2$ of the exceptional divisor $\pi^{-1}(p)$;
\item $E_1$ and $E_2$ are the separatrices of the nodal foliation $\mathcal{F}$ at $q\in M$.
\end{enumerate}
\end{rem}
\begin{rem} Let $S$ be a nodal separator  at $p\in V$.  Restricted to some neighborhood of $p$, the nodal separator $S$ has the following properties:
\begin{enumerate}
\item $S$ is a real surface of dimension three with an isolated singularity at $p\in S$;
\item the Levi distribution on $S\backslash\{p\}$ is integrable, so we have a Levi foliation on  $S\backslash\{p\}$;
\item the Levi foliation on  $S\backslash\{p\}$ is minimal, that is,  its leaves are dense in $S$.
\end{enumerate}
At this point, the following question  become interesting: there exist other examples of real surfaces  satisfying  the properties 1,2 and 3 above? We can easily construct examples which are, essentially, immersed nodal separators: Let $S$ be a nodal separator at $p\in V$ and let $\psi:S\rightarrow V$, $\psi(p)=p$ be continuous, injective and holomorphic on a neighborhood of $S\backslash\{p\}$; then $\psi(S)$ satisfies properties 1,2 and 3 above. There exists an essentially different example?
\end{rem}
  
As in the case of germs of curves, we will define a notion of equisingularity for nodal separators. Let $S$ be a nodal separator  at $p\in V$. We denote by $\mathcal{N}_p(S)$ the set of points equal or infinitely near to $p$ that lie on $S$. 

\begin{defn} Let $V$ and $\widetilde{V}$ be smooth surfaces and let $S$ and $\widetilde{S}$ be two nodal separators at $p\in V $ and at $\tilde{p}\in \widetilde{V}$, respectively. We say that $S$ and $\widetilde{S}$ are equisingular if  there exists a bijection $\phi:\mathcal{N}_p(S)\rightarrow \mathcal{N}_{\tilde{p}}(\widetilde{S})$  preserving the natural ordering and proximity of infinitely near points, that is: $\zeta_1$ is infinitely near (resp. proximate) to $\zeta_2$ if and only if  $\phi(\zeta_1)$ is infinitely near (resp. proximate) to $\phi(\zeta_2)$.
\end{defn}

It is easy to see that, after a blow up at $p\in V$, the nodal separator $S$ intersects the exceptional divisor at exactly one point; clearly this property holds after successively blow ups. In other words,  there is a single point on $S$ in each infinitesimal neighborhood of $p$. Therefore the points in $\mathcal{N}_p(S)$  are sequentially ordered by the natural ordering of infinitely near points.

\begin{prop}\label{caso lineal}Let $S$ and   $\widetilde{S}$ be  nodal separators  associated to nodal singularities at $p\in V$ and $\tilde{p}\in\widetilde{V}$ of eigenvalues $\lambda$ and $\tilde{\lambda}$ in $(1,+\infty)\backslash \mathbb{Q}$, respectively.  Then, $S$ and $\widetilde{S}$ are equisingular if and only if $\lambda=\tilde{\lambda}$. 
\end{prop}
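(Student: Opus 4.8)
The plan is to reduce both implications to one explicit computation: the iterated blow up of a nodal separator of a node of eigenvalue $\lambda$, which I claim reads off the (infinite) continued fraction expansion of $\lambda$ from the proximity relations among the infinitely near points lying on it. Granting this, the ``if'' part is formal and the ``only if'' part follows from the uniqueness of the continued fraction of an irrational number.

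For the ``if'' part, assume $\lambda=\tilde\lambda$. Since a node is linearizable, in suitable local coordinates $S=\{|y|=c|x|^{\lambda}\}\cap B$ and $\widetilde S=\{|\tilde y|=\tilde c|\tilde x|^{\lambda}\}\cap\widetilde B$. The germ of biholomorphism $(V,p)\to(\widetilde V,\tilde p)$ given in these coordinates by $\tilde x=x$, $\tilde y=(\tilde c/c)\,y$ carries $S$ onto $\widetilde S$; being a biholomorphism of smooth surfaces it lifts to every blow up and hence induces a bijection between infinitely near points which respects the natural ordering and the proximity relation, and its restriction to $\mathcal N_{p}(S)$ is the required equisingularity. (Alternatively, the computation below shows directly that the positive constant in the local equation of each strict transform is irrelevant, so that only the exponents --- hence only $\lambda$ --- govern the combinatorics.)

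For the ``only if'' part I would run the blow-up analysis. In linearizing coordinates write $S=\{|y|=|x|^{\lambda}\}$ with $\lambda\in(1,+\infty)\setminus\mathbb Q$, and let $\lambda=[a_{0};a_{1},a_{2},\dots]$, $a_{0}=\lfloor\lambda\rfloor\ge 1$, be its continued fraction. Since $S$ meets every infinitesimal neighbourhood of $p$ in a single point, $\mathcal N_{p}(S)=\{p=p_{0},p_{1},p_{2},\dots\}$ is a sequence with $p_{k+1}$ in the first neighbourhood of $p_{k}$, and the proximity datum is the record, for each $k\ge 1$, of the unique point besides $p_{k-1}$ (if there is one) to which $p_{k}$ is proximate. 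One checks inductively that near $p_{k}$ the strict transform of $S$ has an equation $|v|=c_{k}|u|^{\alpha_{k}}$ in coordinates $(u,v)$ in which each local branch of the exceptional divisor is a coordinate axis, the exponents being governed by
\[
\alpha_{0}=\lambda,\qquad
\alpha_{k+1}=
\begin{cases}
\alpha_{k}-1 & \text{if }\ \alpha_{k}>1,\\[2pt]
\dfrac{\alpha_{k}}{1-\alpha_{k}} & \text{if }\ \alpha_{k}<1.
\end{cases}
\]
These operations preserve irrationality, so every $\alpha_{k}$ is a positive irrational, in particular $\ne 1$; hence the strict transform of $S$ at each $p_{k}$ has a single iterated tangent --- one of the two axes --- and the process never stops. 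Tracking which branch of the exceptional divisor survives each blow up (this is exactly the bookkeeping of the resolution of a plane branch $v^{n}=u^{m}$, now carried out with an irrational exponent) one finds: among $p_{1},p_{2},\dots$ the free points are precisely $p_{1},\dots,p_{a_{0}}$; the remaining $p_{a_{0}+1},p_{a_{0}+2},\dots$ are all satellites and break up into consecutive blocks $B_{1},B_{2},\dots$ of lengths $a_{1},a_{2},\dots$; and all points of $B_{i}$ are proximate to one common earlier point $q_{i}$, with $q_{i}\ne q_{j}$ for $i\ne j$. Thus the proximity structure of $\mathcal N_{p}(S)$ both determines and is determined by the sequence $(a_{0},a_{1},a_{2},\dots)$, i.e.\ by the continued fraction of $\lambda$. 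Now an order-preserving bijection $\phi:\mathcal N_{p}(S)\to\mathcal N_{\tilde p}(\widetilde S)$ necessarily sends $p_{k}$ to $\tilde p_{k}$, and if it also preserves proximity it identifies the two proximity structures; hence the continued fractions of $\lambda$ and $\tilde\lambda$ agree, and since both numbers are irrational, $\lambda=\tilde\lambda$.

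The main obstacle is the combinatorial bookkeeping in the last paragraph: one must follow, along a chain of corner points of unbounded length, which irreducible component of the exceptional divisor passes through each $p_{k}$, so as to identify correctly the earlier point $q_{i}$ and to check that the block lengths are exactly the partial quotients $a_{i}$ (degenerate cases $a_{i}=1$ included). What makes this tractable is precisely the irrationality of $\lambda$: it keeps all the exponents $\alpha_{k}$ irrational, hence away from $1$, so that at each stage the real hypersurface $S$ behaves --- as far as tangent cones and blow ups are concerned --- exactly like a plane curve.
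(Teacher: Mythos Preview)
Your argument is correct and follows essentially the same route as the paper: both read off a continued fraction from the proximity pattern of the chain $p_0,p_1,p_2,\dots$ and conclude by uniqueness of continued fractions of irrationals. The only cosmetic difference is the bookkeeping: you track the exponent $\alpha_k$ via the recursion $\alpha_k\mapsto\alpha_k-1$ or $\alpha_k/(1-\alpha_k)$ and organize the satellites into blocks of lengths $a_1,a_2,\dots$ (the partial quotients of $\lambda$ itself), whereas the paper counts, for each newly created divisor, how many consecutive $p_j$ lie on it, obtaining instead the continued fraction of $\lambda/(\lambda-1)$; these two encodings of the proximity data are equivalent and lead to the same conclusion.
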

\begin{rem} Clearly, by taking the multiplicative inverse if necessary, we can assume that the eigenvalue of a node belongs to $(1,+\infty)\backslash \mathbb{Q}$.
\end{rem}
\begin{proof} If $\lambda=\tilde{\lambda}$, in linearizing coordinates we have that $S$ and $\widetilde{S}$ are both nodal separators associated to the node    $x\frac{\partial}{\partial x}+\lambda y\frac{\partial}{\partial y}$.  This  implies the equisingularity of $S$ and $\widetilde{S}$.  Suppose now that $S$ and $\widetilde{S}$ are equisingular.  
Again, in linearizing coordinates $S$ is a nodal separator of the node $x\frac{\partial}{\partial x}+\lambda y\frac{\partial}{\partial y}$, so $S$ is given by $\{|y|=c|x|^\lambda\}$ for some $c>0$. Moreover, after the linear change of coordinates $(x,y)\mapsto (x,ry)$, for some $r>0$, we can assume that $c=1$.   Let $p_1, p_2,\ldots$ be  the points infinitely near to $p\in V$ that lie on $S$, that is:
\begin{enumerate}
\item $p_1$ is the only point in the exceptional divisor $E_1$  of the  blow up at $p\in V$, that lies in $S$;
\item $p_j$ is the only point in the exceptional divisor $E_j$ of the blow up at $p_{j-1}$, that lies in $S$ ($j\ge 2$).
\end{enumerate} 
All the strict transforms of $E_j$  by subsequent blow ups are also denoted by $E_j$. Define the sequence $n_1,n_2,\dots$ of natural numbers as follows:\begin{enumerate}
\item Let $n_1\in\mathbb{N}$ be  such that $p_1,\ldots,p_{n_1}\in E_1$ and $p_{n_1+1}\notin E_1$. It is not difficult to see that $n_1=[\frac{\lambda}{\lambda-1}]$, so $\frac{\lambda}{\lambda-1}=n_1+\frac{1}{\lambda_1}$ for some $\lambda_1>1$.
\item Let $n_2\in\mathbb{N}$ be such that $p_{n_1+1},\ldots,p_{n_1+n_2}\in E_{n_1+1}$ and $p_{n_1+n_2+1}\notin E_{n_1+1}$. In this case we have $n_2=[\lambda_1]$ and therefore $\frac{\lambda}{\lambda-1}=n_1+\frac{1}{n_2+\frac{1}{\lambda_2}}$ for some $\lambda_2>1$.
\item Let  $n_3\in\mathbb{N}$ be such that $p_{n_1+n_2+1},\ldots,p_{n_1+n_2+n_3}\in E_{n_1+n_2+1}$ and $p_{n_1+n_2+n_3+1}\notin E_{n_1+n_2+1}$. Then  $\frac{\lambda}{\lambda-1}=n_1+\frac{1}{n_2+\frac{1}{n_3+\frac{1}{\lambda_3}}}$ for some $\lambda_3>1$.
\item etc.
\end{enumerate}
Therefore $[n_1,n_2,\ldots]$ is the representation of $\frac{\lambda}{\lambda-1}$ as a continued fraction. On the other hand, let $\widetilde{p}_1, \widetilde{p}_2,\ldots$ be the points infinitely near to $\widetilde{p}$ that lies in $\widetilde{S}$:
\begin{enumerate}
\item $\widetilde{p}_1$ is the only point in the exceptional divisor $\widetilde{E}_1$  of the  blow up at $\widetilde{p}$, that lies in $\widetilde{S}$,
\item $\widetilde{p}_j$ is the only point in the exceptional divisor $\widetilde{E}_j$ of the blow up at $\widetilde{p}_{j-1}$, that lies in $\widetilde{S}$ ($j\ge 2$).
\end{enumerate}
Since the nodal separators ${S}$ and $\widetilde{S}$ are equisingular, clearly we have that
\begin{enumerate}
\item $\widetilde{p}_1,\ldots,\widetilde{p}_{n_1}\in \widetilde{E}_1$ and $\widetilde{p}_{n_1+1}\notin \widetilde{E}_1$.  So $\frac{\tilde{\lambda}}{\tilde{\lambda}-1}=n_1+\frac{1}{\tilde{\lambda}_1}$ for some $\tilde{\lambda}_1>1$.
\item $\widetilde{p}_{n_1+1},\ldots,\widetilde{p}_{n_1+n_2}\in \widetilde{E}_{n_1+1}$ and $\widetilde{p}_{n_1+n_2+1}\notin \widetilde{E}_{n_1+1}$. So $\frac{\tilde{\lambda}}{\tilde{\lambda}-1}=n_1+\frac{1}{n_2+\frac{1}{\tilde{\lambda}_2}}$ for some $\tilde{\lambda}_2>1$.

\item etc.
\end{enumerate}
From this we conclude that  $[n_1,n_2,\ldots]$ is also the representation of $\frac{\tilde{\lambda}}{\tilde{\lambda}-1}$ as a continued fraction, so $\widetilde{\lambda}=\lambda$.

\end{proof}

As in the case of curves, we can establish a notion of topological equivalence for nodal separators.
\begin{defn} Let $V$ and $\widetilde{V}$ be smooth surfaces and let $S$ and $\widetilde{S}$ be two nodal separators at $p\in V $ and at $\tilde{p}\in \widetilde{V}$, respectively. We say that $S$ and $\widetilde{S}$ are topological equivalent if there  is an orientation preserving homeomorphism $\mathfrak{h}:{\mathfrak{U}}\rightarrow\widetilde{{\mathfrak{U}}}$, $\mathfrak{h}(p)=\tilde{p}$  between neighborhoods of $p\in V$ and $\tilde{p}\in\widetilde{V}$, such that:
 \begin{enumerate}
 \item $\mathfrak{h}(S\cap \mathfrak{U})=\widetilde{S}\cap\widetilde{\mathfrak{U}}$;
     \item $\mathfrak{h}$ conjugates the Levi foliations of $S$ and $\widetilde{S}$.
          \end{enumerate}
          The homeomorphism $\mathfrak{h}$ will be called a topological equivalence between the nodal separators $S$ and $\widetilde{S}$.
\end{defn}
\begin{example}\label{ejemplonodo1} Two nodal separators of $x\frac{\partial}{\partial x}+\lambda y\frac{\partial}{\partial y}$, $\lambda\in(0,+\infty)\backslash \mathbb{Q}$ are topologically equivalent by a  biholomorphism of the form $(x,y)\mapsto(x,ry)$,  $r>0$. Thus, given a nodal separator $S$ of a nodal singularity, after a  holomorphic change of coordinates we can always assume that $S=\{|y|=|x|^{\lambda}\}$.

\end{example}

\section{Equisingularity implies topological equivalence}\label{eq-top}
In this section we prove the first part of Theorem \ref{Zariski}: equisingularity implies topological equivalence. Then, we assume that the nodal separators $S$ at $p\in V $ and $\widetilde{S}$ at $\widetilde{p}\in \widetilde{V} $  are equisingular.  Let $p_1, p_2,\ldots$ be the points infinitely near to $p$ that lie on $S$: \begin{enumerate}
\item $p_1$ is the only point in the exceptional divisor $E_1$  of the  blow up at $p$, that lies in $S$;
\item $p_j$ is the only point in the exceptional divisor $E_j$ of the blow up at $p_{j-1}$, that lies in $S$ ($j\ge 2$).
\end{enumerate} All the strict transforms of $E_j$  by subsequent blow ups are also denoted by $E_j$. Analogously, let $\tilde{p}_1\in\widetilde{E}_1$, $\tilde{p}_2\in\widetilde{E}_2,\ldots$ be the points infinitely near to $\tilde{p}$ that lie on $\widetilde{S}$. There exists $k\in\mathbb{N}$ such that  $S$ and $\widetilde{S}$ are nodal separators issuing from nodal foliations at $p_k$ and $\widetilde{p}_k$ respectively. By Remark \ref{corner}, if we take $k$ large enough we can assume the following properties:
\begin{enumerate}
\item $p_k$ is the intersection of $E_k$ with $E_l$ for some $l<k$;  
\item $\widetilde{p}_k$ is the intersection of $\widetilde{E}_k$ with $\widetilde{E}_{\widetilde{l}}$ for some $\widetilde{l}<k$;
\item $E_k$ and $E_l$ are the separatrices of the nodal foliation generating $S$;
\item $\widetilde{E}_k$ and $\widetilde{E}_{\widetilde{l}}$  are the separatrices of the nodal foliation generating $\widetilde{S}$.
\end{enumerate}
 By the equisingularity of  $S$ and $\widetilde{S}$ we have in fact that $\widetilde{l}=l$. 
From example \ref{ejemplonodo1}, we can take  local holomorphic coordinates $(x,y)$ at $p_k$ and  $(\widetilde{x},\widetilde{y})$ at $\widetilde{p}_k$  such that:

\begin{enumerate}
\item $E_l=\{y=0\}$, $E_k= \{x=0\}$;
\item $\widetilde{E}_l=\{\widetilde{y}=0\}$, $\widetilde{E}_k=\{\widetilde{x}=0\}$;
 \item $S=\{|y|=|x|^{\lambda}\}$;  
 \item $\widetilde{S}=\{|\widetilde{y}|=|\widetilde{x}|^{\widetilde{\lambda}}\}$.
\end{enumerate}
Observe that  $p_{k+1}\in E_l$ if and only if $\lambda>1$. On the other hand, by the equisingularity of  $S$ and $\widetilde{S}$ we have that $p_{k+1}\in E_l$ if and only if  $\widetilde{p}_{k+1}\in \widetilde{E}_l$. Then we deduce that $\lambda>1$ if and only if $\widetilde{\lambda}>1$. Without loss of generality we can assume that  $\lambda$ and $\widetilde{\lambda}$ are both greater than one.  Then, since the nodal separators $S$ at $p_k$ and $\widetilde{S}$ at $\widetilde{p}_k$ are also equisingular, from proposition \ref{caso lineal} we conclude that   $\lambda=\widetilde{\lambda}$. Let $M$ and $\widetilde{M}$ be the manifolds obtained by performing the $k$ successively blow ups at $p,p_1,...,p_{k-1}$ and at $\widetilde{p},\widetilde{p}_1,...,\widetilde{p}_{k-1}$, respectively.   Obviously, the homeomorphism $h$ from a neighborhood of $p_k$  to a neighborhood of $\widetilde{p}_k$  given by $h(x,y)=(x,y)$ is a topological equivalence between the nodal separators $S$ at $p_k$ and $\widetilde{S}$ at $\widetilde{p}_k$. This homeomorphism extends as a homeomorphism of a neighborhood of  $E_1\cup\ldots\cup E_k$ in $M$ to a neighborhood of   $\widetilde{E}_1\cup\ldots\cup \widetilde{E}_k$ in $\widetilde{M}$. Therefore the nodal separators $S$ at $p\in V $ and $\widetilde{S}$ at $\widetilde{p}\in \widetilde{V} $  are topologically equivalent.

\section{Topological equivalence implies equisingularity}\label{top-eq}

In this section we reduce the proof of Theorem \ref{Zariski} to the proof of Proposition \ref{aproximacion por curvas} stated below.  Naturally, we assume that the nodal separators $S$ and $\widetilde{S}$ are topologically equivalent.

 Let $p_1, p_2,\ldots$ the points infinitely near to $p$ that lie on $S$, that is:\begin{enumerate}
\item $p_1$ is the only point in the exceptional divisor $E_1$  of the  blow up at $p$, that lies in $S$;
\item $p_j$ is the only point in the exceptional divisor $E_j$ of the blow up at $p_{j-1}$, that lies in $S$ ($j\ge 2$).
\end{enumerate}
In the same way, we consider the sequence  $\tilde{p}_1\in\widetilde{E}_1$, $\tilde{p}_2\in\widetilde{E}_2,\ldots$ of points infinitely near to $\tilde{p}$ that lie on $\widetilde{S}$.  

\begin{prop}\label{aproximacion por curvas}Given $k\in\mathbb{N}$, there exist two germs of analytic irreducible curves $\mathfrak{ C}$ at $p$ and $\widetilde{\mathfrak{ C}}$ at $\widetilde{p}$  such that:

\begin{enumerate}
\item $\mathfrak{ C}$ and $\widetilde{\mathfrak{ C}}$ are topologically equivalent as inmersed curves;
\item the points $p_1,...,p_k$ lies in $\mathfrak{ C}$;
\item the points $\widetilde{p}_1,...,\widetilde{p}_k$ lies in $\widetilde{\mathfrak{ C}}$.
\end{enumerate}
\end{prop}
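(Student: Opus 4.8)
The plan is to produce the curves $\mathfrak{C}$ and $\widetilde{\mathfrak{C}}$ explicitly in suitable local coordinates obtained after resolving the nodal separators, and then push them down. First I would pass to the level where $S$ and $\widetilde S$ become genuine nodal separators of nodal singularities: choose $k'\ge k$ large enough that, after the first $k'$ blow ups, $S$ is a nodal separator of a node at $p_{k'}$ and $\widetilde S$ is a nodal separator of a node at $\tilde p_{k'}$, and (using Remark \ref{corner}) that $p_{k'}$ is a corner $E_{k'}\cap E_l$ which are the two separatrices, similarly for $\tilde p_{k'}$. By Example \ref{ejemplonodo1} fix holomorphic coordinates $(x,y)$ near $p_{k'}$ with $E_l=\{y=0\}$, $E_{k'}=\{x=0\}$, $S=\{|y|=|x|^{\lambda}\}$, and analogously $(\tilde x,\tilde y)$ near $\tilde p_{k'}$ with $\widetilde S=\{|\tilde y|=|\tilde x|^{\tilde\lambda}\}$. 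In these coordinates I would take $\mathfrak{C}$ (the strict transform at level $k'$) to be a branch of the form $y=a x^{m}$, that is, a smooth curve tangent to $E_l$ with contact order $m$ chosen large, and $\widetilde{\mathfrak{C}}$ to be $\tilde y=\tilde a\,\tilde x^{\tilde m}$. The downstairs curves $\mathfrak{C}$ at $p$ and $\widetilde{\mathfrak{C}}$ at $\tilde p$ are then defined as the total transforms of these germs under $\pi$ and $\tilde\pi$ respectively.

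The verification of (2) and (3) is a matter of choosing the integer exponents correctly. The first $k'$ infinitely near points of $S$, namely $p_1,\dots,p_{k'}$, are determined by the continued-fraction data of $\lambda/(\lambda-1)$ exactly as computed in the proof of Proposition \ref{caso lineal}. A smooth curve through $p_{k'}$ tangent to $E_l$ passes through $p_{k'-1},p_{k'-2},\dots$; to force it to run back through the whole chain $p_1,\dots,p_{k'}$ of $S$, I would instead build $\mathfrak{C}$ inductively from the bottom using a Puiseux expansion whose successive Newton exponents reproduce the quotients $n_1,n_2,\dots$ that define the chain — equivalently, pick $\mathfrak{C}$ to have at each blown-up point the same tangent direction as $S$ until level $k$, after which $\mathfrak{C}$ may branch off transversally. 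Concretely this means $\mathfrak{C}$ is a curve whose Puiseux pairs, truncated at the $k$-th infinitely near point, agree with the "integer part'' of the nodal separator's infinitesimal data; since $\lambda\notin\mathbb{Q}$ this truncated data is a genuine finite chain and hence realizable by an algebraic branch. Doing the same truncation with the same integers on the $\widetilde S$ side, using that $\widetilde S$ is topologically equivalent to $S$, produces $\widetilde{\mathfrak{C}}$ with $\tilde p_1,\dots,\tilde p_k$ on it.

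For (1), I would show that $\mathfrak{C}$ and $\widetilde{\mathfrak{C}}$ have the same equisingularity type and then invoke Zariski's theorem for curves \cite{zariski}: having arranged that the chains of infinitely near points of $\mathfrak{C}$ and of $\widetilde{\mathfrak{C}}$ carry the same proximity and ordering data (because they were built from the \emph{same} sequence of integers $n_1,\dots$), the two curves are equisingular, hence topologically equivalent as immersed curves. The main obstacle — and the point I would spend the most care on — is to verify that topological equivalence of the nodal separators really does force the relevant finite initial segment of infinitesimal data to coincide \emph{before} we know Theorem \ref{Zariski}; one cannot simply quote equisingularity of $S$ and $\widetilde S$ here, since that is precisely the direction being proved. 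So I would extract from the topological equivalence $\mathfrak{h}$ only the combinatorial information that is manifestly a topological invariant: the way the Levi foliation's closures of leaves sit in a neighborhood basis, equivalently the sequence recording, for each blow up, whether $S$ meets the strict transform of the previous divisor or the new one. That sequence is exactly $(n_1,n_2,\dots)$, it is read off from the local topology of $S$ near $p$ (via linking numbers of the foliated boundary torus, as in the curve case), and $\mathfrak{h}$ being an ambient homeomorphism carrying $S$ to $\widetilde S$ and Levi foliation to Levi foliation transports it. Once the first $k$ terms are known to match, the curve constructions on the two sides are literally mirror images and Zariski's theorem finishes the proof.
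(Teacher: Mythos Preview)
Your proposal correctly isolates the difficulty but does not actually overcome it. The entire weight of your argument rests on the assertion that the proximity sequence $(n_1,\dots,n_k)$ of $S$ ``is read off from the local topology of $S$ near $p$ (via linking numbers of the foliated boundary torus, as in the curve case).'' This cannot be dismissed by analogy: for a curve, the link with a small sphere is a \emph{knot}, and Zariski's analysis extracts the Puiseux pairs from its iterated cabling structure; for a nodal separator the link is a \emph{$2$-torus}, and you have not said what linking numbers you mean, what they are linking with, or why an ambient homeomorphism must preserve them. More to the point, if this step were valid for every $k$ you would have proved directly that $S$ and $\widetilde S$ are equisingular, and the curves $\mathfrak C,\widetilde{\mathfrak C}$ would be entirely superfluous --- so your proposed resolution of the circularity is the full strength of Theorem~\ref{Zariski} in disguise, asserted rather than proved.

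The paper proceeds in the opposite order and thereby avoids this trap. It never compares the infinitesimal data of $S$ and $\widetilde S$ beforehand. Instead it first replaces $\mathfrak h$ by a ``nice'' equivalence $\mathfrak h_1$ (Proposition~\ref{nice}) which, in the linearizing coordinates at $p_k$ and $\tilde p_k$, has the explicit monomial form
\[
\mathfrak h_1(t\eta,t^{\lambda}\xi)=(t\mu_0\eta^{a}\xi^{b},\,t^{\tilde\lambda}\nu_0\eta^{c}\xi^{d}),\qquad \begin{pmatrix}a&b\\ c&d\end{pmatrix}\in SL(2,\mathbb Z),\ \ \tilde\lambda=\frac{c+d\lambda}{a+b\lambda};
\]
this is obtained from a rigidity lemma (Lemma~\ref{ass}) for lifts of torus maps conjugating irrational linear foliations. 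Then, for any rational $n/m$ close to $\lambda$, radial homeomorphisms $f,\tilde f$ (Lemma~\ref{mn}) exchanging $\{|y|=|x|^{n/m}\}$ with $\{|y|=|x|^{\lambda}\}$ are pre- and post-composed with $\mathfrak h_1$; a direct computation shows the resulting ambient homeomorphism carries the curve $\{(z^{m},z^{n})\}$ at $p_k$ onto $\{(\mu_0 z^{\tilde m},\nu_0 z^{\tilde n})\}$ at $\tilde p_k$, with $\tilde m=|am+bn|$, $\tilde n=|cm+dn|$. Pushed down, these are $\mathfrak C$ and $\widetilde{\mathfrak C}$, and item~(1) holds \emph{because an ambient homeomorphism literally carries one to the other}, not because their equisingularity types were matched in advance. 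Items~(2) and~(3) are automatic since both curves pass through $p_k$ (resp.\ $\tilde p_k$) by construction.
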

Since topological equivalence implies equisingularity in the case of curves,   it is easy to see that Proposition \ref{aproximacion por curvas}  implies that the nodal separators $S$ and $\widetilde{S}$ are equisingular, which will finish the proof of Theorem \ref{Zariski}.

\section{Constructing a better topological equivalence}\label{better}
In this section we begin with the proof of Proposition \ref{aproximacion por curvas}. Concretely, this section is devoted to prove Proposition \ref{nice},  which permit us to construct, given a topological equivalence of nodal separators, another topological equivalence with ``nice'' properties. 

Let $p, \tilde{p}, p_j ,\tilde{p}_j, E_j, \widetilde{E}_j$ be as in Section~\ref{top-eq}.
Clearly,  it is sufficient to prove Proposition \ref{aproximacion por curvas} for $k\in\mathbb{N}$  large enough. Thus, from now on we assume  $k\in\mathbb{N}$  large enough such that:
\begin{enumerate}
\item $p_k$ is the intersection of $E_k$ with $E_l$ for some $l<k$;  
\item $\widetilde{p}_k$ is the intersection of $\widetilde{E}_k$ with $\widetilde{E}_{\widetilde{l}}$ for some $\widetilde{l}<k$;
\item $E_k$ and $E_l$ are the separatrices of the nodal foliation generating $S$;
\item $\widetilde{E}_k$ and $\widetilde{E}_{\widetilde{l}}$  are the separatrices of the nodal foliation generating $\widetilde{S}$.
\end{enumerate}
Denote by $M$ the complex surface obtained by performing the $k$ successively  blow ups at $p$, $p_1$, ...,  $p_{k-1}$.  Set $$E:=\bigcup_{j=1}^{k}E_j $$ and let  $$\pi\colon (M,E)\rightarrow(V,p) $$ be the natural map. In the same way define $\widetilde{M}$, $\widetilde{E}$  and the natural map  $$\widetilde{\pi}\colon (\widetilde{M},\widetilde{E})\rightarrow(\widetilde{V},\widetilde{p}). $$
Let $\mathfrak{h}:\mathfrak{U}\rightarrow\widetilde{\mathfrak{U}}$ be a topological equivalence between the nodal separators $S$ at $p$ and $\widetilde{S}$ at $\widetilde{p}$. Set $U=\pi^{-1}(\mathfrak{U})$,   $\widetilde{U}=\widetilde{\pi}^{-1}(\widetilde{\mathfrak{U}})$ and $$h=\widetilde{\pi}^{-1}\circ\mathfrak{h}\circ\pi:U\backslash E\rightarrow\widetilde{U}\backslash\widetilde{E}.$$
Clearly the following properties hold:

\begin{enumerate}
\item $h$ is a homeomorphism;
\item $h(\zeta)\rightarrow \widetilde{E}$ as $\zeta\rightarrow E$, that is, $\tilde{d}(h(\zeta),\widetilde{E})\rightarrow 0$ as $d(\zeta,E)\rightarrow 0$ for some metrics $d$ and $\tilde{d}$ on $M$ and $\widetilde{M}$ respectively;
\item $h(S\cap U\backslash\{p_k\})=\widetilde{S}\cap\widetilde{U}\backslash\{\widetilde{p}_k\}$;
\item the leaves of the Levi foliation of $S\cap U$ are mapped by $h$ onto the leaves of the Levi foliation of $\widetilde{S}\cap\widetilde{U}$.
\end{enumerate}
The following proposition is inmediate:
\begin{prop} If a map $h:U\backslash E\rightarrow\widetilde{U}\backslash\widetilde{E}$ satisfies the properties 1,2,3 and 4 above, then the map \begin{align*}&\mathfrak{h} \colon\mathfrak{U}\rightarrow\widetilde{\mathfrak{U}},\\
&\mathfrak{h} =\tilde{\pi}\circ h\circ \pi^{-1} \textrm{ on } U\backslash\{0\},\\
&\mathfrak{h}(0) =0
\end{align*} defines a topological equivalence between  the nodal separators $S$ at $p$ and $\widetilde{S}$ at $\widetilde{p}$.
\end{prop}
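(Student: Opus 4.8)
The statement to prove is the ``immediate'' Proposition at the very end: that if $h\colon U\setminus E \to \widetilde U\setminus\widetilde E$ satisfies properties 1--4, then the induced map $\mathfrak h = \tilde\pi\circ h\circ\pi^{-1}$ on $U\setminus\{0\}$, extended by $\mathfrak h(0)=0$, is a topological equivalence between the nodal separators $S$ at $p$ and $\widetilde S$ at $\tilde p$.

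\textbf{Approach.} The plan is to check, one by one, the requirements in the definition of topological equivalence of nodal separators: that $\mathfrak h$ is a well-defined homeomorphism of neighborhoods of $p$ and $\tilde p$ with $\mathfrak h(p)=\tilde p$, that it is orientation preserving, that $\mathfrak h(S\cap\mathfrak U)=\widetilde S\cap\widetilde{\mathfrak U}$, and that it conjugates the Levi foliations. Since $\pi$ and $\tilde\pi$ are biholomorphisms off the exceptional divisors, away from $p$ the map $\mathfrak h$ is visibly a homeomorphism (composition of a homeomorphism with two biholomorphisms), and it is orientation preserving because $h$ is (which is built into property 1 together with the setup — $\mathfrak h$ was obtained from an orientation preserving $\mathfrak h$ originally, so one retains that hypothesis, or alternatively one observes blow ups and their inverses preserve orientation and $h$ does too). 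The content is therefore concentrated at the origin.

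\textbf{Key steps.} First I would show $\mathfrak h$ extends continuously at $p$ with value $\tilde p$: given $\zeta_n\to p$ in $\mathfrak U\setminus\{p\}$, the points $\pi^{-1}(\zeta_n)$ accumulate only on $E=\pi^{-1}(p)$ (since $\pi$ is proper and $\pi^{-1}(p)=E$), so by property 2, $h(\pi^{-1}(\zeta_n))$ accumulates only on $\widetilde E=\tilde\pi^{-1}(\tilde p)$, whence $\mathfrak h(\zeta_n)=\tilde\pi(h(\pi^{-1}(\zeta_n)))\to\tilde p$; this gives continuity at $p$, and the symmetric argument applied to $h^{-1}$ (which satisfies the analogous property 2, by symmetry of the hypotheses) gives that $\mathfrak h^{-1}$ is continuous at $\tilde p$, so $\mathfrak h$ is a homeomorphism of neighborhoods of $p$ and $\tilde p$. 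Second, the set equality: by property 3, $h$ maps $S\cap U\setminus\{p_k\}$ onto $\widetilde S\cap\widetilde U\setminus\{\tilde p_k\}$; pushing forward by $\tilde\pi$ and pulling back by $\pi$, and using that $\pi(S\cap U\setminus\{p_k\})$ equals $(S\cap\mathfrak U)\setminus\{p\}$ — here one uses that the strict transform of $S$ meets $E$ only at $p_k$, which is exactly the setup in Section~\ref{top-eq} — one gets $\mathfrak h((S\cap\mathfrak U)\setminus\{p\})=(\widetilde S\cap\widetilde{\mathfrak U})\setminus\{\tilde p\}$; adjoining $\mathfrak h(p)=\tilde p$ gives $\mathfrak h(S\cap\mathfrak U)=\widetilde S\cap\widetilde{\mathfrak U}$. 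Third, the Levi foliations: on $S\setminus\{p\}$, the Levi foliation is intrinsic (it is a holomorphic invariant of the CR structure) and is carried isomorphically through the biholomorphisms $\pi,\tilde\pi$, so the Levi foliation of $S\cap U\setminus\{p_k\}$ corresponds to that of $S\cap\mathfrak U\setminus\{p\}$ and likewise downstairs; property 4 says $h$ conjugates the former pair, hence $\mathfrak h$ conjugates the Levi foliations of $S\cap\mathfrak U$ and $\widetilde S\cap\widetilde{\mathfrak U}$ on the complement of $\{p\}$, which is all that is required.

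\textbf{Main obstacle.} None of this is deep; the only point needing a little care — and the one I would expect to spend the most words on — is the continuous extension at $p$, specifically the claim that $\pi^{-1}$ of a sequence converging to $p$ can accumulate only on $E$. This is just properness of $\pi$ together with $\pi^{-1}(p)=E$, but it must be phrased correctly because $U\setminus E$ is noncompact; one argues that any subsequential limit $\xi$ of $\pi^{-1}(\zeta_n)$ in the (locally compact) manifold $M$ satisfies $\pi(\xi)=p$ by continuity of $\pi$, hence $\xi\in E$. Everything else is a direct unwinding of definitions, so the proof is short; I would simply remark at the start that $\mathfrak h$ is manifestly an orientation preserving homeomorphism away from $p$ and then verify the extension and the two structural properties at $p$ as above.
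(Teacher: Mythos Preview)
Your proposal is correct and is essentially the approach the paper takes: the paper declares the proposition ``immediate'' and omits the proof entirely, and what you have written is exactly the routine verification one would supply if asked to spell it out. The only minor remark is that property~1 as listed does not literally say $h$ is orientation preserving, so your parenthetical about this being inherited from the original setup (or from the fact that blow ups are biholomorphic off the divisor) is the right way to handle it.
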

Thus, in order to construct a topological equivalence between the nodal separators $S$ at $p$ and $\widetilde{S}$ at $\widetilde{p}$ will be sufficient to construct a map $h$ satisfying the properties 1,2,3 and 4 above. Furthermore, if no confusion arise we can identify both maps $h$ and $\mathfrak{h}$. Then,    from now on it will be convenient to denote $h$ also by $\mathfrak{h}$.

\begin{prop} \label{nice}
Let $\mathfrak{h}\colon\mathfrak{U}\rightarrow\widetilde{\mathfrak{U}}$ be a topological equivalence between the nodal separators $S$  and $\widetilde{S}$. Then there exist:

\begin{enumerate}

\item another topological equivalence $\mathfrak{h}_1$ between $S$ and $\widetilde{S}$;
\item  local holomorphic coordinates $(x,y)$ at $p_k\in M$; 
\item local holomorphic coordinates $(\widetilde{x},\widetilde{y})$  at $\widetilde{p}_k\in\widetilde{M}$; 
\item a matrix $\begin{pmatrix}a&b\\ c&d\\ \end{pmatrix}$ in $ SL(2,\mathbb{Z})$; 
\item real irrational numbers $\lambda,\widetilde{\lambda}>0$; and
\item complex numbers $\mu_0,\nu_0\in\partial\mathbb{D}$

\end{enumerate}
such that:
\begin{enumerate}
\item $E_l=\{y=0\}$, $E_k= \{x=0\}$;
\item $\widetilde{E}_{\widetilde{l}}=\{\widetilde{y}=0\}$, $\widetilde{E}_k=\{\widetilde{x}=0\}$;
 \item $S=\{|y|=|x|^{\lambda}\}$;
 \item $\widetilde{S}=\{|\widetilde{y}|=|\widetilde{x}|^{\widetilde{\lambda}}\}$;
\item ${\widetilde{\lambda}}=\frac{c+d\lambda}{a+b\lambda};$

\item \label{item6}$\mathfrak{h}_1$ maps $\{|y|=|x|^{\lambda},  |x|\le 1\}$ onto
$\{|\widetilde{y}|=|\widetilde{x}|^{\lambda},  |\widetilde{x}|\le 1\}$ by the rule
$$ \mathfrak{h}_1 (t\eta,t^{\lambda}\xi)=(t\mu_0\eta^a\xi^b,t^{\widetilde{\lambda}}\nu_0\eta^c\xi^d); \, \eta,\xi\in\partial\mathbb{D},t\in[0,1].$$
\end{enumerate}
\end{prop}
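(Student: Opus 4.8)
The plan is to work entirely in the blown-up picture, where $S$ and $\widetilde S$ are the "model" nodal separators $\{|y|=|x|^\lambda\}$ and $\{|\widetilde y|=|\widetilde x|^{\widetilde\lambda}\}$ near corners $p_k$ and $\widetilde p_k$, and to promote an arbitrary topological equivalence $\mathfrak h$ to one with the explicit monomial form in item \eqref{item6}. First I would fix the coordinates: by Example \ref{ejemplonodo1} and the corner hypotheses (1)--(4) on $k$, choose holomorphic coordinates $(x,y)$ at $p_k$ and $(\widetilde x,\widetilde y)$ at $\widetilde p_k$ so that $E_l=\{y=0\}$, $E_k=\{x=0\}$, $S=\{|y|=|x|^\lambda\}$, and similarly with tildes; this gives conclusions (1)--(4). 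Next I would extract the combinatorial data. A topological equivalence must send the local exceptional divisor configuration near $p_k$ to that near $\widetilde p_k$ (these are the only $2$-dimensional "bad" loci, or one detects them as the set where the Levi foliation fails to be a submersion onto a disk), so $\mathfrak h$ either sends $\{x=0\}\mapsto\{\widetilde x=0\}$, $\{y=0\}\mapsto\{\widetilde y=0\}$ or swaps them; in either case $\mathfrak h$ restricted to $S\setminus\{p_k\}$ induces a homeomorphism of the "base" torus bundle.

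The core of the argument is a homotopy/covering-space computation. The nodal separator $S\cap\{|x|\le 1, x\ne 0\}$ retracts onto the torus $T=\{|x|=|y|=1\}=\partial\mathbb D\times\partial\mathbb D$ with coordinates $(\eta,\xi)$, and the Levi foliation on this torus is the real flow generated by the closed $1$-form $\operatorname{Im}\!\big(\tfrac{dy}{y}-\lambda\tfrac{dx}{x}\big)$ — equivalently the leaves are the fibers of the map $(\eta,\xi)\mapsto \arg\xi-\lambda\arg\eta \pmod{2\pi}$ thought of via $y x^{-\lambda}$. Since $\mathfrak h$ is an orientation-preserving homeomorphism carrying $S$ to $\widetilde S$ and conjugating Levi foliations, the induced automorphism $\mathfrak h_*$ of $H_1(T;\mathbb Z)\cong\mathbb Z^2$ is given by a matrix $\left(\begin{smallmatrix}a&b\\ c&d\end{smallmatrix}\right)$; orientation-preservation forces determinant $+1$, so the matrix lies in $SL(2,\mathbb Z)$ — this yields item (4). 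Tracking how the linear foliation transforms under such a monomial self-map of the torus gives the relation $\widetilde\lambda=\frac{c+d\lambda}{a+b\lambda}$, which is conclusion (5); this is exactly the classical fact that $GL(2,\mathbb Z)$ acts on slopes of linear foliations of the torus by Möbius transformations, and here the subgroup is $SL(2,\mathbb Z)$.

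Finally I would construct $\mathfrak h_1$. Define $\mathfrak h_1$ on the "cylinder" $\{|y|=|x|^\lambda,\ 0<|x|\le 1\}$ by the explicit monomial formula
$$\mathfrak h_1(t\eta,t^\lambda\xi)=\big(t\,\mu_0\,\eta^a\xi^b,\ t^{\widetilde\lambda}\,\nu_0\,\eta^c\xi^d\big),\qquad \eta,\xi\in\partial\mathbb D,\ t\in(0,1],$$
where $\mu_0,\nu_0\in\partial\mathbb D$ are fixed phases chosen (using that $\mathfrak h(p_k\text{-fiber})$ lands on the $\widetilde p_k$-fiber) so that $\mathfrak h_1$ is freely homotopic to $\mathfrak h$ on $S\setminus\{p_k\}$ and so that $\mathfrak h_1$ extends continuously by $0\mapsto 0$; because $\left(\begin{smallmatrix}a&b\\ c&d\end{smallmatrix}\right)\in SL(2,\mathbb Z)$ this map is a homeomorphism of the torus for each fixed $t$, and the relation in (5) makes it respect the $\{|\widetilde y|=|\widetilde x|^{\widetilde\lambda}\}$ equation and the Levi foliations — giving \eqref{item6}. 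The homotopy between $\mathfrak h|_{S\setminus\{p_k\}}$ and $\mathfrak h_1|_{S\setminus\{p_k\}}$, both being foliation-preserving homeomorphisms of a solid-torus-minus-core inducing the same map on $H_1$, is then pushed across the ambient neighborhood: extend $\mathfrak h_1$ off $S$ first on a small bidisk around $p_k$ and then, using the isotopy extension theorem on the complement of $E$, to all of $\mathfrak U$, keeping it equal to $\mathfrak h$ outside a neighborhood of $p_k$, thereby producing the desired topological equivalence $\mathfrak h_1$ and conclusion (1). The main obstacle is the last step — realizing the prescribed monomial model on $S$ by an \emph{ambient} homeomorphism that still conjugates the Levi foliations globally; this requires a careful isotopy-extension argument along the cylinder $\{|x|\le 1\}$ together with a compatible extension across the corner point $p_k$, and controlling behavior near $E\setminus\{p_k\}$ so that properties 1--4 of Section \ref{better} survive.
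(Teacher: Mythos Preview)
Your overall strategy --- linearize the separators at the corners, read off the $SL(2,\mathbb Z)$ matrix from the action on $H_1$ of the boundary torus, deduce the M\"obius relation $\widetilde\lambda=\frac{c+d\lambda}{a+b\lambda}$, and replace $\mathfrak h$ on $S$ by the monomial model --- is exactly the skeleton of the paper's proof. But two of your steps hide genuine work that the paper carries out explicitly and that your sketch does not address.

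First, the M\"obius relation and the $SL(2,\mathbb Z)$ matrix. You write that ``tracking how the linear foliation transforms under such a monomial self-map of the torus'' yields $\widetilde\lambda=\frac{c+d\lambda}{a+b\lambda}$. But $\mathfrak h$ restricted to the torus is \emph{not} a monomial map; it is an arbitrary foliation-preserving homeomorphism that merely induces the matrix $A$ on $H_1$. The paper proves (Lemma~\ref{ass}) a rigidity statement: any lift $H:\mathbb R^2\to\mathbb R^2$ of such a map necessarily has the form $H(u,v)=H(0,0)+A(u,v)+\kappa(u,v)\cdot(1,\widetilde\lambda)$, i.e.\ it differs from the affine model only by a continuous slide \emph{along the target foliation}. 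This simultaneously gives the M\"obius relation and --- crucially --- the leafwise homotopy from $\mathfrak h$ to the monomial map that you need later. Without this lemma you have neither conclusion (5) nor the interpolation. (Your aside that $\mathfrak h$ ``must send the local exceptional divisor configuration near $p_k$ to that near $\widetilde p_k$'' is also not available: the lift of $\mathfrak h$ is only defined on $U\setminus E$ and does \emph{not} a priori extend over the divisor --- that is the whole point of the paper.)

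Second, and more seriously, the ambient extension. Invoking ``the isotopy extension theorem on the complement of $E$'' is not enough. The isotopy you need is between two embeddings of a \emph{non-compact} $3$-manifold $S\setminus\{p_k\}$, one of which ($\mathfrak h|_S$) is merely continuous and does not send the boundary torus $\{|x|=1\}$ to the boundary torus $\{|\widetilde x|=1\}$; and the extended ambient map must still satisfy $h(\zeta)\to\widetilde E$ as $\zeta\to E$. The paper does something quite different from isotopy extension: it thickens $S$ to a one-parameter family $\mathcal S_s=\{|y|=(1+s\epsilon)|x|^\lambda\}$, $s\in[-1,1]$, replaces $\mathfrak h$ by a map $\mathfrak h_0$ sending each $\mathcal S_s$ to $\widetilde{\mathcal S}_s$, projects along the real nodal flow to compare $\mathfrak h_0$ with the monomial model $\bar{\mathfrak h}$ on a family of boundary tori (Lemma~\ref{geometrico}), and then interpolates across a collar $\mathcal C$ by Poincar\'e geodesics inside each Levi leaf. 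Only after this does a final explicit radial rescaling produce $\mathfrak h_1$ with the exact formula of item~\eqref{item6}. This construction is the bulk of Section~\ref{better}; your proposal does not contain a substitute for it, and the step you flag as ``the main obstacle'' is indeed where the argument as written would break down.
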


\begin{rem}  Observe that the irrational numbers $\lambda,\tilde{\lambda}$ actually depend on the natural number $k$, which we have previously fixed taking into account the properties in the beginning of Section \ref{better} (see remark \ref{corner}). In order to prove Proposition \ref{aproximacion por curvas} we will approximate the nodal separators $S$ and $\widetilde{S}$ by curves of type $y=x^{\frac{m}{n}}$ and $\tilde{y}=\tilde{x}^{\frac{\tilde{m}}{\tilde{n}}}$  for rational numbers $\frac{m}{n}$ and $\frac{\tilde{m}}{\tilde{n}}$ close to $\lambda$ and $\tilde{\lambda}$ respectively. If we consider $k\in\mathbb{N}$ fixed,  a first option to obtain a satisfactory approximation to the infinitesimal behavior of $S$ is to take $\frac{m}{n}$ very close to $\lambda$.   Nevertheless, will be more convenient for us to think in the following different way: for each $k$ we can choose $\frac{m}{n}$ ``moderately'' close to $\lambda=\lambda(k)$, then $y=x^{\frac{m}{n}}$  will give an arbitrarily satisfactory approximation to the infinitesimal behavior of $S$  whenever we take $k$ large enough. The precise mean of the word ``moderately'' above will be established in Section \ref{proofAC}.

\end{rem}
We begin with the proof of Proposition \ref{nice}.

Let $ B'$ be a small diffeomorphic compact ball centered at $p\in V$ and contained in $\mathfrak{U}$. There exist holomorphic coordinates $(x,y)$ at $p_k$ such that the foliation associated to $S$ is given by the holomorphic vector field $x\frac{\partial}{\partial x}+\lambda y\frac{\partial}{\partial y}$ for some irrational number $\lambda>0$ . We can assume that the nodal separator $S$ is given by $\{|y|=|x|^{\lambda}\}$ at $p_k$. Take some $\epsilon>0$ and consider, for each $s\in[-1,1]$, the nodal separator $\mathcal{S}_s$ at $p\in V$ given in the infinitesimal coordinates $(x,y)$   by 

 $$\mathcal{S}_s=\{|y|=(1+s\epsilon)|x|^{\lambda}\}.$$ 

Set $S_s=\mathcal{S}_s\cap B'$ and $\Lambda=\displaystyle{\bigcup_{s\in[-1,1]} S_s}$.

$B'$ and $\epsilon$ can be taken such that  the following properties hold:

\begin{enumerate}

\item  $\partial  B'$ is transverse to each  $\mathcal{S}_s$;
\item in the infinitesimal coordinates $(x,y)$,  each intersection $T'_s=\partial B'\cap\mathcal{S}_{s}$ is given by $$\{|y|=(1+s\epsilon)|x|^{\lambda}, |x|=r'_s\},$$ for some $r'_s>0$;

\item the set $\Lambda\backslash \{p\}$ is diffeomorphic to $(S_0\backslash\{p\})\times[-1,1]$ in such way that

\begin{enumerate}

\item  $(S_s\backslash\{p\})\simeq (S_0\backslash\{p\})\times\{s\}$, and

\item the Levi foliation on    $(S_s\backslash\{p\})\simeq (S_0\backslash\{p\})\times\{s\}$ coincides with the Levi foliation on $(S_0\backslash\{p\})$.

\end{enumerate}

\end{enumerate}

It is easy to construct a continuous map $f$ on the closure of $ B'\backslash\Lambda$ with the following properties:

\begin{enumerate}

\item $f$ maps  $ B'\backslash\Lambda$ homeomorphically onto $ B'\backslash S_0$;

\item for $\sigma\in\{1,-1\}$, we have that  $f$ maps $S_{\sigma}\simeq S_0\times\{\sigma\}$ homeomorphically onto $S_0$ by the rule 
$(\zeta,\sigma)\mapsto \zeta$.

\end{enumerate}

Now, we proceed in an analogous way at $\widetilde{p}\in \widetilde{V}$:  let $\widetilde{ B}$ be a diffeomorphic compact ball centered at $\widetilde{p}\in \widetilde{V}$ and contained in $\widetilde{\mathfrak{U}}$. 
Let $(\widetilde{x},\widetilde{y})$ be holomorphic coordinates  at $\widetilde{p}_k$ such that the foliation associated to $\widetilde{S}$ is given by the holomorphic vector field $\widetilde{x}\frac{\partial}{\partial \widetilde{x}}+\widetilde{\lambda} \widetilde{y}\frac{\partial}{\partial \widetilde{y}}$. We can assume that the nodal separator $\widetilde{S}$ is given by $\{|\widetilde{y}|=|\widetilde{x}|^{\widetilde{\lambda}}\}$ at $\widetilde{p}_k$. Take some $\widetilde{\epsilon}>0$ and consider, for each $s\in[-1,1]$, the nodal separator $\widetilde{\mathcal{S}}_s$ at $\widetilde{p}\in \widetilde{V}$ given in the infinitesimal coordinates $(\widetilde{x},\widetilde{y})$   by 

 $$\widetilde{\mathcal{S}}_s=\{|\widetilde{y}|=(1+s\widetilde{\epsilon})|\widetilde{x}|^{\widetilde{\lambda}}\}.$$ 

Set $\widetilde{S}_s=\widetilde{\mathcal{S}}_s\cap \widetilde{B}$ and $\widetilde{\Lambda}=\displaystyle{\bigcup_{s\in[-1,1]} \widetilde{S}_s}$.

We can take  $\widetilde{B}$ and $\widetilde{\epsilon}$ such that   the following properties hold:

\begin{enumerate}

\item $\partial  \widetilde{B}$ is transverse to each $\widetilde{\mathcal{S}}_s$;

\item in the infinitesimal coordinates $(\widetilde{x},\widetilde{y})$,  each intersection $\widetilde{T}_s=\partial \widetilde{B}\cap\widetilde{\mathcal{S}}_s$ is given by $$\{|\widetilde{y}|=(1+s\widetilde{\epsilon})|\widetilde{x}|^{\widetilde{\lambda}}, |\widetilde{x}|=\widetilde{r}_s\},$$ for some $\widetilde{r}_s>0$:

\item the set $\widetilde{\Lambda}\backslash\{\tilde{p}\}$ is diffeomorphic to $(\widetilde{S}_0\backslash\{\tilde{p}\})\times[-1,1]$ in such way that

\begin{enumerate}

\item  $(\widetilde{S}_s\backslash\{\tilde{p}\})\simeq (\widetilde{S}_0\backslash\{\tilde{p}\})\times\{s\}$, and

\item the Levi foliation on    $(\widetilde{S}_s\backslash\{\tilde{p}\})\simeq (\widetilde{S}_0\backslash\{\tilde{p}\})\times\{s\}$ coincides with the Levi foliation on $(\widetilde{S}_0\backslash\{\tilde{p}\})$.

\end{enumerate}

\end{enumerate}

We construct a continuous map $\widetilde{f}$ on the closure of $ \widetilde{B}\backslash\widetilde{\Lambda}$ with the following properties:

\begin{enumerate}

\item $\widetilde{f}$ maps  $ \widetilde{B}\backslash\widetilde{\Lambda}$ homeomorphically onto $\widetilde{B}\backslash \widetilde{S}_0$;

\item for $\sigma\in\{1,-1\}$, we have that  $\widetilde{f}$ maps $\widetilde{S}_{\sigma}\simeq \widetilde{S}_0\times\{\sigma\}$ homeomorphically onto $\widetilde{S}_0$ by the rule 
$(\zeta,\sigma)\mapsto \zeta$.

\end{enumerate}

Clearly we can assume $B'$ small enough such that $\mathfrak{h}(B')$ is contained in the interior of $\widetilde{B}$. Then we can define the map $\mathfrak{h}_0=\tilde{f}^{-1}\circ\mathfrak{h}\circ {f}$ on $ B'\backslash\Lambda$. On  $$(\Lambda\backslash\{p\})\simeq (S_0\backslash\{p\})\times [-1,1]$$ define 
$$\mathfrak{h}_0(\zeta,s)=(\mathfrak{h}(\zeta),s)\in (\widetilde{S}_0\backslash\{\tilde{p}\})\times [-1,1]\simeq(\widetilde{\Lambda}\backslash\{\tilde{p}\})$$ and set $\mathfrak{h}_0(p)=\tilde{p}$.  It is easy to verify the following properties:

\begin{enumerate}

\item $\mathfrak{h}_0$ maps  $ B'$ homeomorphically into  $ \widetilde{B}$;
\item  if $s\in [-1,1]$, then $\mathfrak{h}_0$ maps $S_s$  homeomorphically into  $\widetilde{S}_s$ conjugating the Levi foliations.
\end{enumerate}

Let $B$  be  a diffeomorphic compact ball, centered at $p\in V$ and such that:

\begin{enumerate}

\item $B$ is contained $\mathfrak{U}$;

\item $B'$ is contained in the interior of $B$;

\item in the infinitesimal coordinates $(x,y)$, the intersection of each $\mathcal{S}_s$  with $\overline{B\backslash B'}$ is given by  
$$\{|y|=(1+s\epsilon)|x|^{\lambda}, r'_s\le |x|\le r_s\},$$ for some $r_s>r'_s$.

\end{enumerate}
Set: \begin{enumerate}
\item $C_s= \mathcal{S}_s\cap\overline{B\backslash B'};$ \item $\mathcal{C}=\displaystyle{\bigcup_{s\in[-1,1]} C_s};$
\item $\mathcal{T}' =\mathcal{C}\cap\partial B'=\{(x,y)\in \mathcal{C}\colon |x|=r'_s\};$
\item
$\mathcal{T} =\mathcal{C}\cap \partial B=\{(x,y)\in \mathcal{C}\colon |x|=r_s\}.$  
\end{enumerate}
Clearly $\mathcal{C}$ is foliated by the restrictions to $\mathcal{C}$ of the leaves of the foliations of each $\mathcal{S}_s$; in fact, this foliation on $\mathcal{C}$ is generated by the vector field $x\frac{\partial}{\partial x}+\lambda y\frac{\partial}{\partial y}$ in the infinitesimal coordinates $(x,y)$. 
Given $z=(x_z,y_z)\in\mathcal{T}'$, let $L_z$ be the leave in $\mathcal{C}$ passing through $z$.  Consider the path $\gamma_z:[0,1]\rightarrow L_z$, $\gamma_z(t)=(x(t),y(t))$ such that $\gamma_z(0)=z$ and $x(t)=(1-t+t\frac{r_s}{r_s\rq{}})x_z$. Clearly we have the following properties:

\begin{enumerate}

\item    $\gamma_z(1)\in\mathcal{T} $;
\item $z\mapsto \gamma_z(1)$ defines a homeomorphism between $\mathcal{T}' $ and $\mathcal{T} $;
\item $\gamma_z((0,1))$ is contained in the interior of $L_z$;
\item  the sets $I_z= \gamma_z([0,1])$, $z\in \mathcal{T}'$  define a 1-dimensional foliation of $\mathcal{C}$.

\end{enumerate}
Set:

 \begin{enumerate}

\item $\widetilde{C}_s= \widetilde{\mathcal{S}}_s\cap\overline{\widetilde{B}\backslash \mathfrak{h}_0 (B')};$

 \item $\widetilde{\mathcal{C}}=\displaystyle{\bigcup_{s\in[-1,1]}\widetilde{C}_s};$

\item
$T_s= \mathcal{S}_s\cap\partial{B};$

 \item $\widetilde{\mathcal{T}}=\displaystyle{\bigcup_{s\in[-1,1]} \widetilde{T}_s}=\widetilde{\mathcal{C}}\cap\widetilde{B}$.

\end{enumerate}

\begin{lem}\label{geometrico} There exist $\mu_s,\nu_s\in\partial\mathbb{D}$ depending continuously on $s\in[-1,1]$ and a matrix  $\begin{pmatrix}a&b\\ c&d\\ \end{pmatrix}$ in $ SL(2,\mathbb{Z})$  such that the homeomorphism
  $ \bar{\mathfrak{h}}:\mathcal{T}\rightarrow \widetilde{\mathcal{T}}$ defined by
$$ \bar{\mathfrak{h}}(r_s\eta,(1+s\epsilon){r_s}^{\lambda}\xi)=(\tilde{r}_s\mu_s\eta^a\xi^b,(1+s\tilde{\epsilon})\tilde{r}_s^{\tilde{\lambda}}\nu_s\eta^c\xi^d); \; \eta, \xi \in \partial\mathbb{D}, s\in[-1,1] $$
has the following properties:
\begin{enumerate}

\item  $ \bar{\mathfrak{h}}$ conjugates the foliations in $\mathcal{T}$ and $\widetilde{\mathcal{T}}$;
\item  for all $z\in\mathcal{T}'$, the points $\mathfrak{h}_0(z)$ and $ \bar{\mathfrak{h}}(\gamma_z(1))$ are contained in the same leaf of $\tilde{x}\frac{\partial}{\partial \tilde{x}}+\tilde{\lambda} \tilde{y}\frac{\partial}{\partial \tilde{y}}$.

\end{enumerate} 

\end{lem}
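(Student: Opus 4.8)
The statement asks us to produce, on the two "outer boundary tori'' $\mathcal{T}$ and $\widetilde{\mathcal{T}}$, a homeomorphism of the explicit monomial form $(\eta,\xi)\mapsto(\mu_s\eta^a\xi^b,\nu_s\eta^c\xi^d)$ that conjugates the one-dimensional foliations and is compatible, leaf-by-leaf of the nodal foliation, with the already-constructed $\mathfrak{h}_0$ on the inner sphere. The key observation is that everything is governed by the fundamental group. Each $T_s=\mathcal{S}_s\cap\partial B$ is a $2$-torus: in the coordinates $(x,y)$ at $p_k$ it is $\{|x|=r_s,\ |y|=(1+s\epsilon)r_s^{\lambda}\}$, with the angular coordinates $(\arg x,\arg y)$ giving a basis of $\pi_1(T_s)\cong\mathbb{Z}^2$. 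The Levi (= nodal) foliation on each $T_s$ is the linear irrational foliation of slope determined by $\lambda$; likewise on $\widetilde{T}_s$ with slope determined by $\widetilde\lambda$. Since $\mathfrak{h}_0$ is already a topological equivalence between the $S_s$ conjugating Levi foliations, it induces a well-defined isomorphism on $\pi_1$ of the boundary tori (after connecting $\mathcal{T}'$ to $\mathcal{T}$ along the product $\mathcal{C}$, which is a deformation retract onto $\mathcal{T}'$). That isomorphism is an element of $GL(2,\mathbb{Z})$; because $\mathfrak{h}$ is orientation preserving and both foliations are "positively'' oriented, the induced map has determinant $+1$, giving the matrix $\begin{pmatrix}a&b\\ c&d\end{pmatrix}\in SL(2,\mathbb{Z})$ of the statement. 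This is the matrix that will appear, and the relation $\widetilde\lambda=\frac{c+d\lambda}{a+b\lambda}$ (item (5) of Proposition~\ref{nice}) is exactly the condition that this linear automorphism of $\mathbb{Z}^2$ carries the $\lambda$-slope foliation to the $\widetilde\lambda$-slope one; it will be recovered a posteriori from the fact that $\bar{\mathfrak h}$ must conjugate foliations.

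**The steps.** First I would set up coordinates on $\mathcal{T}$: write a point of $T_s$ as $(r_s\eta,(1+s\epsilon)r_s^{\lambda}\xi)$ with $\eta,\xi\in\partial\mathbb{D}$, so $\mathcal{T}\cong\partial\mathbb{D}\times\partial\mathbb{D}\times[-1,1]$, and similarly $\widetilde{\mathcal{T}}\cong\partial\mathbb{D}\times\partial\mathbb{D}\times[-1,1]$; note the foliation on $T_s$ has leaves the orbits of $t\mapsto(e^{it}\eta,e^{i\lambda t}\xi)$. Second, for fixed $s$, analyze the homeomorphism $\mathcal{T}'\to\widetilde{\mathcal{T}}$ obtained by first flowing $\mathcal{T}$ back to $\mathcal{T}'$ along $\mathcal{C}$ via $z\mapsto\gamma_z(1)^{-1}$, then applying $\mathfrak{h}_0$, then recording the result modulo the $\widetilde\lambda$-flow on $\widetilde{\mathcal{C}}$: this is a homeomorphism between two tori conjugating the respective linear foliations, hence up to isotopy it is linear, and lifts to an affine map of the universal covers with linear part in $SL(2,\mathbb{Z})$. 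The linear part is constant in $s$ (it is a discrete invariant varying continuously), giving the single matrix $\begin{pmatrix}a&b\\ c&d\end{pmatrix}$. Third, read off from "linear part = $(a,b;c,d)$'' that the map on the covers has the form $(\arg\eta,\arg\xi)\mapsto(a\arg\eta+b\arg\xi+\theta_s,\ c\arg\eta+d\arg\xi+\phi_s)$ for some $\theta_s,\phi_s$ continuous in $s$; exponentiating and setting $\mu_s=e^{i\theta_s}$, $\nu_s=e^{i\phi_s}$ — after absorbing the radial factors $\tilde r_s,(1+s\tilde\epsilon)\tilde r_s^{\tilde\lambda}$ — yields precisely the formula for $\bar{\mathfrak h}$ in the statement. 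Fourth, verify the two asserted properties: (1) is immediate since a linear map of slope $(a,b;c,d)$ sends the $\lambda$-foliation to the $\widetilde\lambda$-foliation exactly when $\widetilde\lambda=(c+d\lambda)/(a+b\lambda)$, which we arrange/observe; (2) holds because $\bar{\mathfrak h}$ was built to agree with $\mathfrak{h}_0$ after projecting along leaves of $\widetilde{x}\partial_{\widetilde x}+\widetilde\lambda\widetilde{y}\partial_{\widetilde y}$ — the flowing procedure in step two is precisely "project to the same leaf.''

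**The main obstacle.** The delicate point is establishing that a homeomorphism between two tori conjugating irrational linear foliations is isotopic to the linear map given by its action on $\pi_1$, and then promoting that isotopy-statement to the clean on-the-nose monomial formula, uniformly in the parameter $s\in[-1,1]$. The isotopy class is controlled by $\pi_0(\mathrm{Homeo}(T^2))=GL(2,\mathbb{Z})$, but a general homeomorphism in the linear isotopy class need not itself be linear; one must use that it conjugates the foliations to straighten it, i.e.\ show that conjugating two minimal linear foliations forces the map to be of the form "linear plus a correction constant along leaves'', and the correction can be absorbed into the $\mu_s,\nu_s$. Concretely one lifts to $\mathbb{R}^2$, uses that the leaf through the lift of a base point is a line of the correct slope and is sent to a line of slope $\widetilde\lambda$, and iterates over the dense set of leaves to pin down the map up to the two rotational constants. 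Making the constants $\mu_s,\nu_s$ depend continuously on $s$ requires only that the whole construction (flow time, base points, lifts) can be chosen continuously in $s$, which is clear from the product structures on $\Lambda,\widetilde\Lambda,\mathcal{C},\widetilde{\mathcal{C}}$ already fixed. I expect this rigidity-plus-continuity argument to be the bulk of the proof; the matrix being in $SL(2,\mathbb{Z})$ (rather than $GL$) is a short orientation check, and the relation $\widetilde\lambda=(c+d\lambda)/(a+b\lambda)$ then falls out of property (1).
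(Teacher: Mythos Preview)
Your approach is essentially the paper's: compose the radial flows with $\mathfrak{h}_0$ to get a leaf-preserving map $h_s:T_s\to\widetilde T_s$, lift to $\mathbb{R}^2$, invoke a rigidity statement for maps conjugating irrational linear foliations (isolated in the paper as Lemma~\ref{ass}) to write the lift as $H_s(0,0)+A(u,v)+\kappa_s(u,v)(1,\tilde\lambda)$, and define $\bar{\mathfrak h}$ by dropping the last term and exponentiating, with $\mu_s,\nu_s$ read off from $H_s(0,0)$. Two small corrections to your write-up: first, the paper explicitly notes that $h_s$ need not be a homeomorphism (the projection $\rho$ along the real flow can fail to be injective on $\mathfrak{h}_0(T'_s)$), only that it is continuous, preserves leaves, and induces an isomorphism on $\pi_1$ --- this is all Lemma~\ref{ass} requires; second, the correction term $\kappa_s(u,v)(1,\tilde\lambda)$ is a genuine function of $(u,v)$ in the leaf direction of the \emph{target} foliation, so it is not ``absorbed into $\mu_s,\nu_s$'' but simply discarded --- the whole point is that discarding it does not change which leaf the image lies on, and that is exactly what yields property~(2).
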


Before proceeding with the proof of Lemma \ref{geometrico}, we need to establish  the following dynamical lemma.
\begin{lem}\label{ass} Suppose that the maps  $H,A\colon\mathbb{R}^2\rightarrow\mathbb{R}^2$ satisfy the following hypothesis:
 \begin{enumerate}
\item $H$ is continuous and $A$ is a linear isomorphism;
\item $H(u+m,v+n)=H(u,v)+A(m,n),\textrm{ for all }(u,v)\in\mathbb{R}^2,\,(m,n)\in\mathbb{Z}^2$; 
\item there exist irrational numbers $\lambda,\tilde{\lambda}\in\mathbb{R}$ such that $H$ maps  leaves of the foliation $dv-\lambda du=0$ into leaves of the foliation $d{v}-\tilde{\lambda} d{u}=0$.
\end{enumerate} Then we have the following properties:
\begin{enumerate}
\item $\lambda$ and $\tilde{\lambda}$ are related by $$\widetilde{\lambda}=\frac{c+d\lambda}{a+b\lambda};$$
\item there exists a continuous function $\kappa:\mathbb{R}^2\rightarrow\mathbb{R}$ such that
\[H(u,v)=H(0,0)+A(u,v)+\kappa(u,v)\cdot(1,\tilde{\lambda}).\]
\end{enumerate}
\end{lem}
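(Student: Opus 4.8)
The plan is to exploit the $\mathbb{Z}^2$-equivariance to reduce the global structure of $H$ to its behaviour on a fundamental domain, and then use the foliation-preservation hypothesis to pin down the relation between $\lambda$ and $\tilde\lambda$. First I would write $A=\begin{pmatrix}a&b\\ c&d\end{pmatrix}$ (with $ad-bc=\pm 1$ a priori, though the context will force $SL(2,\mathbb{Z})$) and consider the map $G(u,v):=H(u,v)-H(0,0)-A(u,v)$. By hypothesis (2), $G$ is $\mathbb{Z}^2$-periodic: $G(u+m,v+n)=G(u,v)$ for all $(m,n)\in\mathbb{Z}^2$, so $G$ descends to a continuous map on the torus $\mathbb{T}^2=\mathbb{R}^2/\mathbb{Z}^2$, and in particular $G$ is bounded. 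The goal of part (2) is exactly to show that $G$ takes values in the line $\mathbb{R}\cdot(1,\tilde\lambda)$; equivalently, that a suitable linear functional $\ell$ vanishing on $(1,\tilde\lambda)$ satisfies $\ell\circ G\equiv 0$.

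Next I would bring in hypothesis (3). A leaf of $dv-\lambda\,du=0$ is a line of slope $\lambda$, i.e. $\{(u_0+t,\,v_0+\lambda t):t\in\mathbb{R}\}$; its image under $H$ must lie in a line of slope $\tilde\lambda$. Applying this to the line through the origin, $t\mapsto H(t,\lambda t)$ lies in a line of slope $\tilde\lambda$ through $H(0,0)$; writing $H(t,\lambda t)=H(0,0)+A(t,\lambda t)+G(t,\lambda t)$, and noting $A(t,\lambda t)=t(a+b\lambda,\,c+d\lambda)$, we get that $t\mapsto t(a+b\lambda,c+d\lambda)+G(t,\lambda t)$ must be parallel to $(1,\tilde\lambda)$. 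Since $G$ is bounded while the $A$-term grows linearly in $t$, the direction $(a+b\lambda,c+d\lambda)$ itself must be parallel to $(1,\tilde\lambda)$ — otherwise the curve would escape any slope-$\tilde\lambda$ line. This yields immediately
\[
\widetilde\lambda=\frac{c+d\lambda}{a+b\lambda},
\]
which is part (1). (One should also observe $a+b\lambda\neq 0$: since $\lambda$ is irrational and $(a,b)\in\mathbb{Z}^2$ is not zero — $A$ being invertible — we cannot have $a+b\lambda=0$.)

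For part (2), having fixed the direction, let $\ell(u,v)=\tilde\lambda u - v$, so that $\ell$ annihilates $(1,\tilde\lambda)$, and set $\kappa$ to be defined by $H(u,v)=H(0,0)+A(u,v)+\kappa(u,v)(1,\tilde\lambda)$ once we know $\ell(G)\equiv 0$; concretely $\kappa(u,v)$ will be the first coordinate of $G(u,v)$, continuous since $G$ is. To prove $\ell\circ G\equiv 0$, fix an arbitrary point $(u_0,v_0)$ and run the same argument along the leaf through it: $t\mapsto H(u_0+t,v_0+\lambda t)$ lies on a slope-$\tilde\lambda$ line, so $\ell$ applied to it is constant in $t$. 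Expanding via $G$ and using that $\ell(A(t,\lambda t))=\ell(t(a+b\lambda,c+d\lambda))=0$ (by part (1)), we find $\ell(G(u_0+t,v_0+\lambda t))$ is constant in $t$, i.e. $t\mapsto \ell(G(u_0+t,v_0+\lambda t))$ is a constant $c(u_0,v_0)$. Now I would use irrationality of $\lambda$: the line of slope $\lambda$ through $(u_0,v_0)$ projects to a dense (equidistributed) subset of $\mathbb{T}^2$, and $\ell\circ G$ descends to a continuous function $\bar{g}$ on $\mathbb{T}^2$; a continuous function on the torus that is constant along one dense orbit of an irrational line flow is constant on the whole torus. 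Thus $\ell\circ G\equiv c$ for a single constant $c$; evaluating at $(0,0)$ gives $c=\ell(G(0,0))=\ell(0)=0$, so $\ell\circ G\equiv 0$ and the decomposition follows with $\kappa$ continuous.

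The main obstacle I anticipate is the density/equidistribution step: one must argue carefully that $\ell\circ G$, a priori only known to be constant along \emph{each} individual slope-$\lambda$ line (with the constant possibly depending on the line), is in fact globally constant. The clean way is to pass to the torus, where $G$ is genuinely a function on a compact space, and invoke that the irrational linear flow is minimal, so that continuity plus invariance along one orbit forces constancy; the periodicity hypothesis (2) is exactly what makes this torus reduction legitimate. Everything else is bookkeeping with the linear algebra of $A$ and the boundedness of $G$.
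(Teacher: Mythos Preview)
Your argument is correct and is organized somewhat differently from the paper's. The paper never introduces the periodic map $G=H-H(0,0)-A$ explicitly; instead, for part (1) it chooses integer sequences $m_k,n_k\to\infty$ with $m_k\lambda-n_k\to 0$, expands $H(m_k,\lambda m_k)$ via the equivariance, divides by $m_k$ and passes to the limit to obtain $A(1,\lambda)\parallel(1,\tilde\lambda)$; for part (2) it repeats this with sequences satisfying $m_k\lambda-n_k+(v_0-\lambda u_0)\to 0$. Your route---pull off the affine part, observe that the remainder $G$ is $\mathbb{Z}^2$-periodic hence bounded, then use minimality of the irrational linear flow on $\mathbb{T}^2$ to force $\ell\circ G\equiv 0$---is the same idea at heart (both exploit density coming from the irrationality of $\lambda$) but packaged more conceptually. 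The payoff of your packaging is that boundedness of $G$ makes part (1) a one-line limit $t\to\infty$, and part (2) becomes a standard minimality argument rather than a second sequence computation; the paper's version, by contrast, avoids invoking torus dynamics and stays entirely at the level of explicit estimates.

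One small correction: you justify $a+b\lambda\neq 0$ by saying $(a,b)\in\mathbb{Z}^2$ and $\lambda\notin\mathbb{Q}$, but the lemma's hypotheses do not assert that $A$ has integer entries. This is harmless, since your own argument already gives what you need: you show $(a+b\lambda,c+d\lambda)=A(1,\lambda)$ is a nonzero scalar multiple of $(1,\tilde\lambda)$, and $A(1,\lambda)\neq 0$ because $A$ is an isomorphism, so the scalar---which equals $a+b\lambda$---is nonzero.
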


\begin{proof} \emph{(1)} Since $\lambda$ is irrational, given $k\in\mathbb{N}$ there exist $m_k,n_k\in\mathbb{Z}$ tending to infinite such that $$\delta_k:=m_k\lambda-n_k\rightarrow 0 \textrm{ as }k\rightarrow\infty.$$ Since $(m_k,\lambda m_k)$ and $(0,0)$ belong to the line $v-\lambda u=0$ and this line is mapped into a leaf of the foliation $d{v}-\tilde{\lambda} d{u}=0$, there exists $r_k\in\mathbb{R}$ such that 
\begin{equation}\label{dyn1}H(m_k,\lambda m_k)-H(0,0)=r_k(1,\tilde{\lambda}).\end{equation}
On the other hand we have 
\begin{align*}H(m_k,\lambda m_k)&=H(m_k,\delta_k+n_k)=H(0,\delta_k)+A(m_k,n_k)\\&=H(0,\delta_k)+A(m_k,m_k\lambda-\delta_k)\\&=H(0,\delta_k)+m_k A(1,\lambda)-A(0,\delta_k).\end{align*} From this and from Equation  \ref{dyn1} we obtain
\begin{equation}\label{dyn2}\nonumber
A(1,\lambda)=\frac{1}{m_k}H(0,0)-\frac{1}{m_k}H(0,\delta_k)+\frac{1}{m_k}A(0,\delta_k)+\frac{r_k}{m_k}(1,\tilde{\lambda}).
\end{equation} Then, if $k\rightarrow\infty$ in  last equation we deduce that $A(1,\lambda)=c(1,\tilde{\lambda})$ for some $c\in\mathbb{R}$. Then, since  $A$ is an isomorphism and therefore  $c\neq 0$, we conclude that   $$\widetilde{\lambda}=\frac{c+d\lambda}{a+b\lambda}.$$
\emph{(2)} Fix $(u_0,v_0)\in \mathbb{R}^2$. Given $k\in\mathbb{N}$ we now take $m_k,n_k\in\mathbb{Z}$ tending to infinite such that $$\delta_k:=m_k\lambda-n_k+v_0-u_0\lambda\rightarrow 0 \textrm{ as }k\rightarrow\infty.$$ 
Since $(u_0,v_0)$ and $(m_k,\delta_k+n_k)$ belong the line $v-\lambda u=v_0-\lambda u_0$, there exists $r_k\in\mathbb{R}$ such that 
\begin{align*}r_k(1,\tilde{\lambda})&=H(m_k,\delta_k+n_k)-H(u_0,v_0)\\&=H(0,\delta_k)+A(m_k,n_k)-H(u_0,v_0)\\&=H(0,\delta_k)-H(u_0,v_0)+A\big(u_0+m_k-u_0,m_k\lambda-\delta_k+v_0-u_0\lambda\big)\\&=H(0,\delta_k)-H(u_0,v_0)+A(u_0,v_0)-A(0,\delta_k)+ (m_k-u_0) A(1,\lambda)\\
&=H(0,\delta_k)-H(u_0,v_0)+A(u_0,v_0)-A(0,\delta_k)+ (m_k-u_0)c(1,\tilde{\lambda})\end{align*} and therefore we have 
$$H(u_0,v_0)-H(0,\delta_k)-A(u_0,v_0)=(m_kc-u_0c-r_k)(1,\tilde{\lambda})-A(0,\delta_k).$$
Then, if $k\rightarrow\infty$  we deduce that there exists $\kappa(u_0,v_0)\in\mathbb{R}$ such that $$H(u_0,v_0)-H(0,0)-A(u_0,v_0)=\kappa(u_0,v_0)(1,\tilde{\lambda}).$$ Clearly $\kappa$ is necessarily continuous, so the proof of the lemma is complete.
\end{proof}

\noindent\emph{Proof of Lemma \ref{geometrico}.}   Consider the real flow $\phi$ associated to the vector field  $\widetilde{x}\frac{\partial}{\partial \widetilde{x}}+\widetilde{\lambda} \widetilde{y}\frac{\partial}{\partial \widetilde{y}}$. Given $\zeta\in \widetilde{S}_s\backslash\{0\}$, let $\rho(\zeta)\in\widetilde{T}_s$ be the intersection intersection between $\widetilde{T}_s$   and the orbit of $\phi$  through $\zeta$.    Define the map $h_s:T_s\rightarrow\widetilde{T}_s$ as follows. Given $w\in{T}_s$, let $z\in\mathcal{T}'$ be such that $\gamma_z(1)=w$ and put $h_s(w)=\rho(\mathfrak{h}_0(z))$. Let $\mathcal{G}_s$ and $\widetilde{\mathcal{G}}_s$ be the one dimensional real foliations induced by the Levi foliations on $T_s$ and $\widetilde{T}_s$, respectively.   It is easy to verify the following properties:
\begin{enumerate}
\item $h_s$ maps leaves of $\mathcal{G}_s$ to leaves of $\widetilde{\mathcal{G}}_s$;
\item Although $h_s$ is not necessarily a homeomorphism, it induces an isomorphism $h_s^*:\pi_1(T_s)\rightarrow\pi_1(\widetilde{T}_s)$.

\end{enumerate}
Recall that $$T_s=\{(r_s\eta,(1+s\epsilon)r_s^{\lambda}{\xi})\colon\eta,\xi\in\partial\mathbb{D}\}$$ and 
$$\widetilde{T}_s=\{(\tilde{r}_s\eta,(1+s\tilde{\epsilon})\tilde{r}_s^{\widetilde{\lambda}}{\xi})\colon\eta,\xi\in\partial\mathbb{D}\}.$$ Consider the bases $\{\alpha_s, \beta_s\}$ of $\pi_1(T_s)$ and $\{\tilde{\alpha}_s,\tilde{\beta}_s\}$ of  $\pi_1(\widetilde{T}_s)$ given by the positively oriented loops \begin{eqnarray}\alpha_s&=&r_s\partial\mathbb{D}\times\{(1+s\epsilon)r_s^{\lambda}\};\\
 \beta_s&=&\{r_s\}\times (1+s\epsilon)r_s^{\lambda}\partial\mathbb{D};\\  \tilde{\alpha}_s&=&\tilde{r}_s\partial\mathbb{D}\times\{(1+s\tilde{\epsilon})\tilde{r}_s^{\tilde{\lambda}}\};\\ 
\tilde{\beta}_s&=&\{\tilde{r}_s\}\times (1+s\tilde{\epsilon})\tilde{r}_s^{\tilde{\lambda}}\partial\mathbb{D}.
\end{eqnarray} 
Let $A_s$ be the matrix in $SL(2,\mathbb{Z})$ representing the isomorphism $h_s^*$ respect to the bases above. In fact, it is easy to see that $A_s$ does not depend on $s\in[-1,1]$, so we have $$A_s=A=\begin{pmatrix}a&b\\ c&d\\ \end{pmatrix}\in \textrm{SL}(2,\mathbb{Z}).$$ Consider the coverings

$$(u,v)\in\mathbb{R}^2\mapsto (r_se^{2\pi iu},(1+s\epsilon)r_s^{\lambda}e^{2\pi iv})\in T_s;$$  
$$(\tilde{u},\tilde{v})\in\mathbb{R}^2\mapsto (\tilde{r}_s e^{2\pi i\tilde{u}}(1+s\tilde{\epsilon})\tilde{r}_s^{\tilde{\lambda}}e^{2\pi i\tilde{v}})\in \widetilde{T}_s.$$
 and let $H_s:\mathbb{R}^2\rightarrow\mathbb{R}^2$ be a lift of $h_s$. The pullbacks of $\mathcal{G}_s$ and $\widetilde{\mathcal{G}}_s$ in the planes $(u,v)$ and $ (\tilde{u},\tilde{v})$ define the foliations $dv-\lambda du=0$ and $d\tilde{v}-\tilde{\lambda} d\tilde{u}=0$, respectively.  It is easy to see the following:
\begin{enumerate}
\item $H_s(u+m,v+n)=H_s(u,v)+A(m,n),\textrm{ for all }(u,v)\in\mathbb{R}^2,\,(m,n)\in\mathbb{Z}^2$; 
\item $H_s$ maps leaves of    $dv-\lambda du=0$ into leaves of  $d\tilde{v}-\tilde{\lambda} d\tilde{u}=0$.
\end{enumerate}
By Lemma \ref{ass} there exists a continuous function $\kappa_s:\mathbb{R}^2\rightarrow\mathbb{R}$ such that
\begin{equation}\label{rigido}H_s(u,v)=H_s(0,0)+A(u,v)+\kappa_s(u,v)(1,\widetilde{\lambda})\end{equation} and we have the equality
 $\widetilde{\lambda}=\frac{c+d\lambda}{a+b\lambda}$. Consider the homeomorphism  $$\overline{H}_s(u,v)=H_s(0,0)+A(u,v)$$ and let $\bar{{h}}_s:T_s\rightarrow\widetilde{T}_s$ be the corresponding induced homeomorphism. Clearly $\overline{H}_s$ conjugates the foliations defined by     $dv-\lambda du=0$ and $d\tilde{v}-\tilde{\lambda} d\tilde{u}=0$, so $\bar{h}_s$ conjugates $\mathcal{G}_s$ with $\widetilde{\mathcal{G}}_s$ . Let $H_s(0,0)=(\mathfrak{u}_s,\mathfrak{v}_s)$ and define $\mu_s=e^{2\pi i \mathfrak{u}_s}$ and $ \nu_s=e^{2\pi i \mathfrak{v}_s}$. Then it is easy to see that 
 $$\bar{{h}}_s(r_s \eta,(1+s\epsilon)r_s^{\lambda}\xi)=(\tilde{r}_s\mu_s \eta^a\xi^b,(1+s\tilde{\epsilon})\tilde{r}_s^{\tilde{\lambda}}\nu_s\eta^c \xi^d),\textrm{ for all }\eta,\xi\in\partial\mathbb{D}.$$ 
Since $\bar{h}_s=\bar{\mathfrak{h}}|_{T_s}$ for all $s\in[-1,1]$, item 1 of Lemma \ref{geometrico} is easily obtained. From equation \ref{rigido} it is easy to see that, for each $W\in\mathbb{R}^2$, $s\in[-1,1]$, the points  $H_s(W)$ and $\overline{H}_s(W)$ are in the same leaf of $d\tilde{v}-\tilde{\lambda}d\tilde{u}=0$. Therefore,  for each $w\in T_s$,  $s\in[-1,1]$,  the points $h_s(w)$ and $\bar{h}_s(w)$ are in the same leaf of $\widetilde{\mathcal{G}}_s$. Since $h_s(w)=\rho(\mathfrak{h}_0(z))$ provided $w=\gamma_z(1)$, we have that $\rho(\mathfrak{h}_0(z))$ and   $\bar{h}_s(\gamma_z(1))$ are in the same leaf of $\widetilde{\mathcal{G}}_s$. Moreover, since $\rho$ preserves the leaves of    $\tilde{x}\frac{\partial}{\partial \tilde{x}}+\tilde{\lambda} \tilde{y}\frac{\partial}{\partial \tilde{y}}$, we have that $\mathfrak{h}_0(z)$ and   $\bar{h}_s(\gamma_z(1))$ are in the same leaf of $\tilde{x}\frac{\partial}{\partial \tilde{x}}+\tilde{\lambda} \tilde{y}\frac{\partial}{\partial \tilde{y}}$.
This proves item 2 of Lemma \ref{geometrico}.\qed 

Given $z\in\mathcal{T}'$, let $s_z\in[-1,1]$ be such that $z\in S_{s_z}$ and let $H_z$ be the leaf of the Levi foliation of $S_{s_z}$  containing $z$. We know that
 $S_{s_z}$ is mapped by $\mathfrak{h}_0$ into $\widetilde{S}_{s_z}$. Moreover,   $\mathfrak{h}_0(H_z)$ is contained  in the interior of  a leave $\widetilde{H}_z$ of the Levi foliation of $\widetilde{S}_{s_z}$. Let $\widetilde{L}_z$ be the closure of $\widetilde{H}_z\backslash \mathfrak{h}_0(H_z)$.  The interior of $\widetilde{L}_z$ is holomorphically equivalent to a disc, so we can consider the Poincar\'e metric in   the interior of $\widetilde{L}_z$. Let $\widetilde{\gamma}_z:\mathbb{R}\rightarrow \widetilde{L}_z$ be a geodesic such that 

\begin{enumerate}

\item[] $\widetilde{\gamma}_z(-\infty):=\displaystyle{\lim_{s\rightarrow -\infty}\widetilde{\gamma}_z(s)=\mathfrak{h}_0(z)}$
\item[] $\widetilde{\gamma}_z(+\infty):=\displaystyle{\lim_{s\rightarrow +\infty}\widetilde{\gamma}_z(s)= \bar{\mathfrak{h}}(z)}$ 
\end{enumerate} 
and set $\widetilde{I}_z=\widetilde{\gamma}_z(\mathbb{R}\cup\pm\infty)$.  We have the following properties:

\begin{enumerate}
\item although the parameterized geodesic $\widetilde{\gamma}_z$ is not uniquely defined, the set $\widetilde{I}_z$ is well defined and depends continuously on $z\in\mathcal{T}'$;
\item  the sets $\widetilde{I}_z$, $z\in \mathcal{T}'$  defines a partition of $\widetilde{\mathcal{C}}$.

\end{enumerate}

 In order to choose $\widetilde{\gamma}_z$
depending continuously on  $z\in\mathcal{T}'$ it suffices to define the value $\widetilde{\gamma}_z(0)$ depending continuously on 
$z\in\mathcal{T}'$. Observe the following facts:

\begin{enumerate}

\item$\widetilde{L}_z$ is diffeomorphic to a closed band and $\partial_1\widetilde{L}_z:=\widetilde{L}_z\cap \widetilde{B}$  is a component of its boundary;
\item Since $\partial \widetilde{B}$ is smooth, the boundary $\partial_1\widetilde{L}_z$ depends smoothly  on $z$. Observe that we can assume  $\partial\widetilde{B}$ to be real analytic near $\widetilde{\mathcal{T}}$.
\end{enumerate}
Then, it is not difficult to prove that, for each $z$, the euclidean length of $\widetilde{\gamma}_z$ is finite.   Moreover, it is easy to see that there is $\delta>0$ such that the euclidean length of $\gamma_z$ is greater than $\delta$ for all  $z\in\mathcal{T}'$. Then we can define $\widetilde{\gamma}_z(0)$ such that the euclidean length of $\widetilde{\gamma}_z|_{[0,+\infty)}$ is equal to $\delta$. It is not difficult to see that $\widetilde{\gamma}_z(0)$  depends continuously on $z\in\mathcal{T}'$.
Fix an increasing diffeomorphism $\phi:(0,1)\rightarrow\mathbb{R}$ and define the homeomorphism $h_z:I_z\rightarrow\widetilde{I}_z$ by
 
\begin{enumerate}

\item[] \hspace{2cm} $h_z(\gamma_z(s))=\widetilde{\gamma}_z(\phi(s))$, if $s\in(0,1)$;
\item[] \hspace{2
cm} $h_z(z)=\mathfrak{h}_0(z)$; 
\item[] \hspace{2
cm} $h_z(\gamma_z(1))= \bar{\mathfrak{h}}(\widetilde{\gamma}_z(1))$.

\end{enumerate} 
Now, we can extend the map $\mathfrak{h}_0$ to $\mathcal{C}$ by putting $\mathfrak{h}_0=h_z$ on $I_z$.  The extended $\mathfrak{h}_0$  has the following properties:

\begin{enumerate}
\item $\mathfrak{h}_0$ is a homeomorphism between $B'\cup \mathcal{C}$ and $\mathfrak{h}_0(B')\cup\widetilde{\mathcal{C}}$;
\item $\mathfrak{h}_0$ maps the nodal separator $$\{|y|=(1+s\epsilon)|x|^{\lambda}, |x|\le r_s\}$$   onto the nodal separator
 $$\{|\widetilde{y}|=(1+s\tilde{\epsilon})|\tilde{x}|^{\tilde{\lambda}}, |\tilde{x}|\le \tilde{r}_s\}.$$
\item $\mathfrak{h}_0$ maps  ${T}_s$ onto $\widetilde{{T}}_s$ by the rule
$$ \mathfrak{h}_0(r_s\eta,(1+s\epsilon){r_s}^{\lambda}\xi)=(\tilde{r}_s\mu_s\eta^a\xi^b,(1+s\tilde{\epsilon})\tilde{r}_s^{\tilde{\lambda}}\mu_s\eta^c\xi^d); \; \eta, \xi \in \partial\mathbb{D}, s\in[-1,1]. $$
\end{enumerate}
Put $\mathfrak{h}_0(x,y)=(\mathfrak{f}(x,y),\mathfrak{g}(x,y))$  and define $$ \mathfrak{h}_1\colon B'\cup \mathcal{C}\rightarrow \mathfrak{h}_0(B')\cup\widetilde{\mathcal{C}}$$ as follows:\\

 $\displaystyle{\mathfrak{h}_1(tx,t^{\lambda}y)=\bigg(|s|\mathfrak{f}\Big(\frac{t}{|s|}x,(\frac{t}{|s|})^{\lambda}y\Big),{|s|}^{\widetilde{\lambda}}\mathfrak{g}\Big(\frac{t}{|s|}x,(\frac{t}{|s|})^{\lambda}y\Big)\bigg),}$ \\
\vspace{-1mm}
{\flushright for $(x,y)\in T_s$, $0<t<|s|$, $0<|s|\le 1$;\\}
\vspace{5mm}
$\displaystyle{\mathfrak{h}_1(tx,t^{\lambda}y)=\Big(t\mathfrak{f}(x,y),{t}^{\tilde{\lambda}}\mathfrak{g}(x,y)\Big),}$\\ 
\vspace{-3mm}
{\flushright for $(x,y)\in T_s$, $|s|\le t\le 1$, $|s|\le 1$; and\\} 
\vspace{5mm}
$\mathfrak{h}_1=\mathfrak{h}_0$ otherwise.\\

\noindent It is easy to see that ${\mathfrak{h}_1}$  has the following properties:

\begin{enumerate}
\item ${\mathfrak{h}_1}$ is a homeomorphism between $B'\cup \mathcal{C}$ and $\mathfrak{h}_0(B')\cup\widetilde{\mathcal{C}}$;
\item ${\mathfrak{h}_1}$ maps the nodal separator $$\{|y|=|x|^{\lambda}, |x|\le r_0\}$$   onto the nodal separator
 $$\{|\widetilde{y}|=|\widetilde{x}|^{\widetilde{\lambda}}, |\widetilde{x}|\le \widetilde{r}_0\}$$
 by the rule
$$ {\mathfrak{h}_1}(tr_0\eta,t^{\lambda}{r_0}^{\lambda}\xi)=(t\tilde{r}_0\mu_0\eta^a\xi^b,t^{\tilde{\lambda}}\tilde{r}_0^{\tilde{\lambda}}\nu_0\eta^c\xi^d); \; \eta, \xi \in \partial\mathbb{D}, t\in[0,1]. $$
\end{enumerate}
Clearly we can extend  ${\mathfrak{h}_1}$ to a neighborhood of $$\{|y|=|x|^{\lambda}, |x|\le r_0\}.$$ Moreover, by a linear change of coordinates we can assume that $r_0=\widetilde{r}_0=1$, so the proof of Proposition \ref{nice} is complete.

\section{Proof of Proposition \ref{aproximacion por curvas}}\label{proofAC}

By simplicity, we can assume that  $\mathfrak{h}:\mathfrak{U}\rightarrow\widetilde{\mathfrak{U}}$ satisfies the properties 1 to 6 in Proposition \ref{nice}. 

Consider $(\alpha,\beta)\in\mathbb{R}^+\times\mathbb{R}^+$ fixed. It is easy to see that the family of real curves $ (t^{\alpha}\eta,t^{\beta}\xi)$, $t\in[0,1]$ indexed by  $(\eta,\xi)\in\partial (\mathbb{D}\times \mathbb{D})$ defines a 1-dimensional foliation on $\overline{\mathbb{D}}\times \overline{\mathbb{D}}$ topologically equivalent to the standard real radial foliation. In particular, any $(x,y)\in (\overline{\mathbb{D}}\times \overline{\mathbb{D}})\backslash\{0\}$ can be expressed in a unique way as $$(x,y)= (t^{\alpha}\eta,t^{\beta}\xi)$$ for a some $(\eta,\xi)\in\partial (\mathbb{D}\times \mathbb{D})$, $t\in(0,1]$. We will need the following  lemma.
\begin{lem}\label{mn} Given $m,n\in\mathbb{N}$ define
$f:\overline{\mathbb{D}}\times \overline{\mathbb{D}}\rightarrow\overline{\mathbb{D}}\times\overline{\mathbb{D}}$ as follows. Firstly, define $f(0,0)=(0,0)$.  Secondly, if $(x,y)\neq 0$, from the considerations above we have  $$(x,y)= (t^{m}\eta,t^{n}\xi)$$ for a some $(\eta,\xi)\in\partial (\mathbb{D}\times \mathbb{D})$, $t\in(0,1]$, so we can define 
$$f(x,y)=(t\eta,t^{\lambda}\xi).$$  Then we have the following properties:
\begin{enumerate}
\item $f$ is a homeomorphism;
\item $f$ maps $\{|y|=|x|^{\frac{n}{m}}, |x|\le 1\}$ onto $\{|y|=|x|^{\lambda},|x|\le 1\};$
\item \label{itemid} $f=\emph{id}$ on $\partial(\mathbb{D}\times\mathbb{D})$.
\end{enumerate}
\end{lem}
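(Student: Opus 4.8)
The plan is to realize $f$ as a composition of two ``generalized polar coordinate'' homeomorphisms and to read off all three assertions from that description. Fix $(\alpha,\beta)\in\mathbb{R}^+\times\mathbb{R}^+$ and let $K$ be the cone
\[
K=\big(\partial(\mathbb{D}\times\mathbb{D})\times[0,1]\big)\big/\big(\partial(\mathbb{D}\times\mathbb{D})\times\{0\}\big),
\]
with cone point $*$. Since $\partial(\mathbb{D}\times\mathbb{D})$ is a topological $3$-sphere, $K$ is homeomorphic to a closed $4$-ball; in particular $K$ is compact. Let $\Psi_{\alpha,\beta}\colon K\to\overline{\mathbb{D}}\times\overline{\mathbb{D}}$ be the map induced by $((\eta,\xi),t)\mapsto(t^{\alpha}\eta,t^{\beta}\xi)$, with $\Psi_{\alpha,\beta}(*)=(0,0)$. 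By the discussion preceding the lemma, every nonzero point of $\overline{\mathbb{D}}\times\overline{\mathbb{D}}$ is of the form $(t^{\alpha}\eta,t^{\beta}\xi)$ for a unique $(\eta,\xi)\in\partial(\mathbb{D}\times\mathbb{D})$, $t\in(0,1]$; hence $\Psi_{\alpha,\beta}$ is a continuous bijection, and since $K$ is compact and $\overline{\mathbb{D}}\times\overline{\mathbb{D}}$ is Hausdorff it is a homeomorphism. Applying this with $(\alpha,\beta)=(m,n)$ and with $(\alpha,\beta)=(1,\lambda)$, the very definition of $f$ reads $f=\Psi_{1,\lambda}\circ\Psi_{m,n}^{-1}$, so $f$ is a homeomorphism of $\overline{\mathbb{D}}\times\overline{\mathbb{D}}$; this is item (1).

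For item (3), if $(\eta,\xi)\in\partial(\mathbb{D}\times\mathbb{D})$ then $\Psi_{m,n}((\eta,\xi),1)=(\eta,\xi)$, and by uniqueness of the representation the parameter $t$ attached to the point $(\eta,\xi)$ must equal $1$; hence $f(\eta,\xi)=\Psi_{1,\lambda}((\eta,\xi),1)=(\eta,\xi)$. For item (2), I would check the two set identities
\[
\{|y|=|x|^{n/m},\ |x|\le 1\}=\Psi_{m,n}(\Theta),\qquad \{|y|=|x|^{\lambda},\ |x|\le 1\}=\Psi_{1,\lambda}(\Theta),
\]
where $\Theta\subset K$ is the image of $\{((\eta,\xi),t):|\eta|=|\xi|=1\}$ (the ``distinguished torus times interval'', which contains $*$). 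Indeed, if $|\eta|=|\xi|=1$ then $(t^{\alpha}\eta,t^{\beta}\xi)$ has moduli $(t^{\alpha},t^{\beta})$ with $t^{\beta}=(t^{\alpha})^{\beta/\alpha}$; conversely a point with $|y|=|x|^{\beta/\alpha}$, $|x|\le 1$, is recovered by $t=|x|^{1/\alpha}$, $\eta=x/|x|$, $\xi=y/|y|$ (and equals $*$ when $x=0$). Since $f=\Psi_{1,\lambda}\circ\Psi_{m,n}^{-1}$, it carries $\{|y|=|x|^{n/m},\ |x|\le 1\}=\Psi_{m,n}(\Theta)$ bijectively onto $\Psi_{1,\lambda}(\Theta)=\{|y|=|x|^{\lambda},\ |x|\le 1\}$, giving item (2).

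The verifications above are essentially bookkeeping; the only point requiring a little care is the assertion that $\Psi_{\alpha,\beta}$ is a homeomorphism. Continuity is immediate from the formula, including at the cone point, since $\big|(t^{\alpha}\eta,t^{\beta}\xi)\big|\to 0$ as $t\to 0$ uniformly in $(\eta,\xi)$; injectivity --- whose only delicate case is the ``edge'' $|\eta|=|\xi|=1$, where the two coordinate patches of $\partial(\mathbb{D}\times\mathbb{D})$ overlap --- is precisely the uniqueness statement already recorded before the lemma; and continuity of the inverse is then automatic from compactness of $K$. So in fact there is no serious obstacle: Lemma \ref{mn} is a routine consequence of viewing $f$ through these generalized polar coordinates, and the value of isolating it is purely organizational, to be used when approximating $S$ and $\widetilde{S}$ by the rational curves $y=x^{m/n}$ in Section \ref{proofAC}.
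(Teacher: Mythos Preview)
Your argument is correct and follows essentially the same route as the paper: both view $f$ as the composition of the ``$(1,\lambda)$-polar coordinates'' with the inverse of the ``$(m,n)$-polar coordinates'', and both derive items (2) and (3) by the same direct computations with $|\eta|=|\xi|=1$ and $t=1$. Your version is slightly more explicit in that you package the radial foliation as a cone $K$ and invoke the compact-to-Hausdorff criterion to certify that $\Psi_{\alpha,\beta}$ is a homeomorphism, whereas the paper simply asserts that $f$ conjugates the two ``topologically radial'' foliations; but this is a difference of presentation, not of substance.
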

\begin{rem} Observe that for any $(\eta,\xi)\in\partial (\mathbb{D}\times \mathbb{D})$ we can write  $$f(t^{m}\eta,t^{n}\xi)=(t\eta,t^{\lambda}\xi)$$  even if $t=0$. So we can consider that $f$ is defined by the unique expression $$f(t^{m}\eta,t^{n}\xi)=(t\eta,t^{\lambda}\xi)$$  for any $(\eta,\xi)\in\partial (\mathbb{D}\times \mathbb{D})$, $t\in[0,1]$.  
\end{rem}
\noindent\emph{Proof of Lemma \ref{mn}.}

\emph{(1)} For the first assertion it is sufficient to see that $f$ defines a topological equivalence between the topologically radial foliations defined by the pairs $(m,n)$  and $(1,\lambda)$. 

\emph{(2)} Given $(x,y)$ such that $|y|=|x|^{\frac{n}{m}}$, $|x|\le 1$, we easily see that $(x,y)=(t^m\eta,t^n\xi)$ with $|\eta|=|\xi|=1$, $t\in[0,1]$. Then $$(x',y'):=f(x,y)=(t\eta,t^{\lambda}\xi)$$ clearly satisfies $|y'|=|x'|^{\lambda}$. On the other hand, any $(x',y')$ such that  $|y'|=|x'|^{\lambda}$, $|x'|\le 1$ can be expressed as $(x',y')=(t\eta,t^{\lambda}\xi)$ with $|\eta|=|\xi|=1$, $t\in[0,1]$. Then  
$(x',y')=f(t^m\eta,t^n\xi)$, where $(x,y)=(t^m\eta,t^n\xi)$ obviously satisfies  $|y|=|x|^{\frac{n}{m}}$, $|x|\le 1$. This proves the second assertion. 

\emph{(3)} For the third assertion it is sufficient to see that $(x,y)=(t^m\eta,t^n\xi)\in\partial(\mathbb{D}\times\mathbb{D})$ implies $t=1$, so $f(x,y)=f(\eta,\xi)=(\eta,\xi)$.
\qed \\

We see from Proposition \ref{nice} that $$\frac{c+d\lambda}{a+b\lambda}>0;$$
hence
\begin{enumerate}
\item  ${a+b\lambda}>0$  and ${c+d\lambda}>0$; or
\item    ${a+b\lambda}<0$  and ${c+d\lambda}<0$.
\end{enumerate}
Take  $m,n\in\mathbb{N}$ with $n/m$ irreducible and close enough to $\lambda$ such that:
\begin{enumerate}
\item  ${am+bn}>0$  and ${cm+dn}>0$; or
\item  $am+bn<0$  and $cm+dn<0$.
\end{enumerate}
Let $f$ be as in Lemma \ref{mn}. Then $f$ defines a homeomorphism of the neighborhood $ \overline{\mathbb{D}}\times \overline{\mathbb{D}}$ of $p_k\in M$ with itself. If we put $f=\textrm{id}$ on $M \backslash \overline{\mathbb{D}}\times \overline{\mathbb{D}}$, from item \ref{itemid} of Lemma \ref{mn} we have the following properties:

\begin{enumerate}
\item $f$ is a homeomorphism of $M$ with itself;
\item $f(E)=E$;
\item $f$ maps $\{|y|=|x|^{\frac{n}{m}}, |x|\le 1\}$ onto $\{|y|=|x|^{\lambda},|x|\le 1\}$ by the rule
$$f(t^m\eta,t^n\xi)=(t\eta,t^{\lambda}\xi);\; (\eta,\xi)\in\partial(\mathbb{D}\times \mathbb{D}),t\in[0,1].$$

\end{enumerate}

If we set \begin{eqnarray}\nonumber\widetilde{m}&=&|am+bn|,\\ \nonumber \widetilde{n}&=&|cm+dn|
\end{eqnarray}  and apply Lemma \ref{mn} to a neighborhood of $\widetilde{p}_k$ in $\widetilde{M}$, we can construct as above a map $\widetilde{f}$ such that:

\begin{enumerate}

\item $\widetilde{f}$ is a homeomorphism of $\widetilde{M}$ with itself;
\item $\widetilde{f}(\widetilde{E})=\widetilde{E}$;
\item $\widetilde{f}$ maps $\{|\widetilde{y}|=|\widetilde{x}|^{\frac{\widetilde{n}}{\widetilde{m}}}, |\widetilde{x}|\le 1\}$ onto $\{|\widetilde{y}|=|\widetilde{x}|^{\widetilde{\lambda}},|\widetilde{x}|\le 1\}$  by the rule
$$\widetilde{f}({t}^{\widetilde{m}}\eta,t^{\widetilde{n}}\xi)=(t\eta,t^{\widetilde{{\lambda}}}\xi);\; (\eta,\xi)\in\partial(\mathbb{D}\times\mathbb{D}),t\in[0,1].$$

\end{enumerate}

If we consider the map $\mathfrak{h}_1:=\widetilde{f}^{-1}\circ \mathfrak{h}\circ f$,  clearly we have the following properties:
\begin{enumerate}
\item  $\mathfrak{h}_1$ maps the complement of $E$ in a neighborhood of $M$ onto the complement of  $\widetilde{E}$ in a neighborhood of $\widetilde{M}$;
\item $\mathfrak{h}(\zeta)\rightarrow \widetilde{E}$ as $\zeta\rightarrow E$.
\end{enumerate}
Moreover, from item 6 of Proposition \ref{nice}   we obtain an explicit expression of $\mathfrak{h}_1$ on $\{|y|=|x|^{\frac{n}{m}}, |x|\le 1\}$ as follows. If $(x,y)$  belongs to $\{|y|=|x|^{\frac{n}{m}}, |x|\le 1\}$, as we have seen in the proof of Lemma \ref{mn} we have
  $(x,y)=(t^m\eta,t^n\xi)$ with $|\eta|=|\xi|=1$,  $t\in[0,1]$ and  therefore:

\begin{eqnarray}\nonumber \mathfrak{ h}_1(x,y)&=&\widetilde{f}^{-1}\circ \mathfrak{ h}\circ f(t^m\eta,t^n\xi)
=\widetilde{f}^{-1}\circ\mathfrak{h}(t\eta,t^{\lambda}\xi)\\  \nonumber
&=&\widetilde{f}^{-1}(t\mu_0\eta^a\xi^b,t^{\widetilde{\lambda}}\nu_0\eta^c\xi^d)=
(t^{\widetilde{m}}\mu_0\eta^a\xi^b,t^{\widetilde{n}}\nu_0\eta^c\xi^d) \\ \nonumber
&=& (t^{|am+bn|}\mu_0\eta^a\xi^b,t^{|cm+dn|}\nu_0\eta^c\xi^d).
\end{eqnarray}
Here we have to cases.
In the first case we have $|am+bn|=am+bn$ and $|cm+dn|=cm+dn$ and therefore:

\begin{eqnarray}\nonumber \mathfrak{h}_1(x,y)&=& (t^{am+bn}\mu_0\eta^a\xi^b,t^{cm+dn}\nu_0\eta^c\xi^d)\\ \nonumber
&=& (\mu_0(t^m\eta)^a(t^n\xi)^b, \nu_0(t^m\eta)^c(t^n\xi)^d)\\ \nonumber
&=&(\mu_0 x^ay^b,\nu_0 x^cy^d).
\end{eqnarray}
In the other case we have

\begin{eqnarray}\nonumber \mathfrak{h}_1(x,y)&=& (t^{-am-bn}\mu_0\eta^a\xi^b,t^{-cm-dn}\nu_0\eta^c\xi^d)\\ \nonumber
&=& (\mu_0(t^m\eta^{-1})^{-a}(t^n\xi^{-1})^{-b},\nu_0(t^m\eta^{-1})^{-c}(t^n\xi^{-1})^{-d})\\ \nonumber
&=&(\mu_0{\overline{x}}^{-a}\overline{y}^{-b},\nu_0\overline{x}^{-c}\overline{y}^{-d}).
\end{eqnarray}
In any case, it is easy to see that $\mathfrak{ h}_1$ maps the curve $$\{(z^m,z^n):|z|<1\}\textrm{ at $p_k\in M$ }$$
to the curve $$\{(\mu_0 z^{\tilde{m}},\nu_0z^{\tilde{n}}):|z|<1\} \textrm{ at $\tilde{p}_k\in\widetilde{M}$ }.$$
Clearly these curves define two curves $\mathfrak{C}$ at $p$  and $\widetilde{\mathfrak{ C}}$ at $\widetilde{p}$ satisfying the properties 1, 2 and 3 of Proposition \ref{aproximacion por curvas}.

\section{Topological invariance of the eigenvalue}

Let $V$ and $\widetilde{V}$ be smooth complex surfaces and let $S$ and $\widetilde{S}$ be nodal separators at  $p\in V$  and at $\tilde{p}\in \widetilde{V}$, respectively.  We know that, after a finite sequence of blow ups at $p$, the nodal separator $S$ is generated by a nodal foliation with an irrational positive eigenvalue $\lambda$. Clearly this eigenvalue depends on the number of iterated blow ups realized at $p$, but  next theorem shows that, taking into consideration this number of blow ups, the eigenvalue is a topological invariant of the nodal separator. Moreover,  next theorem also show that there are only to possibilities for the map induced  by a topological equivalence between $S\backslash\{p\}$ and $\widetilde{S}\backslash\{\tilde{p}\}$  at homology level.
 
 \begin{prop} \label{best}
Let $\mathfrak{h}\colon\mathfrak{U}\rightarrow\widetilde{\mathfrak{U}}$ be a topological equivalence between the nodal separators $S$  and $\widetilde{S}$.  Let $ p_j ,\tilde{p}_j, E_j, \widetilde{E}_j$ $(j\in\mathbb{N}$) be as in Section~\ref{top-eq}. Let $k\in\mathbb{N}$ be such that 
\begin{enumerate}
\item $p_k$ is the intersection of $E_k$ with $E_l$ for some $l<k$;
\item $\tilde{p}_k$ is the intersection of $\widetilde{E}_k$ with $\widetilde{E}_{\tilde{l}}$ for some $\tilde{l}<k$;
\item $S$ at $p$ is generated by a nodal foliation whose separatrices are contained in $E_k$ and $E_l$; 
\item  $\widetilde{S}$ at $\tilde{p}$ is generated by a nodal foliation whose separatrices are contained in $\widetilde{E}_k$ and $\widetilde{E}_{\tilde{l}}$.
\end{enumerate} Let $(x,y)$ and  $(\tilde{x},\tilde{y})$ be holomorphic coordinates at $p_k$ and at  $\tilde{p}_k$, respectively,  such that  
\begin{enumerate}
\item   $p\simeq(0,0)$, $\tilde{p}\simeq (0,0)$; 
\item $E_l=\{y=0\}$, $E_k= \{x=0\}$,  $\widetilde{E}_{\tilde{l}}=\{\widetilde{y}=0\}$, $\widetilde{E}_k=\{\widetilde{x}=0\}$;
\item  $S$ at  $p$  is given by $\{|y|=|x|^{\lambda}\}$ for some irrational number $\lambda>0$;
\item  $\widetilde{S}$ at $\tilde{p}$  is given by $\{|\tilde{y}|=|\tilde{x}|^{\tilde{\lambda}}\}$ for some irrational number $\tilde{\lambda}>0$.
\end{enumerate} Let  $\mathfrak{h}^*$ be the map  from $H_1(S\backslash\{p_k\})$ to $H_1(\widetilde{S}\backslash\{\tilde{p}_k\})$ induced by $\mathfrak{h}$ at homology level. Clearly these groups can be naturally identified if we think $(x,y)\simeq(\tilde{x},\tilde{y})$, so we can think that $\mathfrak{h}^*$ is an isomorphism of $\mathbb{Z}^2$. Then, we have the following properties:
\begin{enumerate}
\item \label{it1} $\tilde{l}=l$;
\item $\tilde{\lambda}=\lambda$;
\item the map $\mathfrak{h}^*$ is the identity or the inversion isomorphism according to $\mathfrak{h}$ preserves or reverses the natural orientations of Levi foliations leaves.
\end{enumerate}

\end{prop}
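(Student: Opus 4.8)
The plan is to obtain (1) and (2) from equisingularity, and (3) by confronting two constraints on the induced isomorphism $\mathfrak h^{*}$: the Levi foliation forces $(1,\lambda)$ to be an eigenvector of $\mathfrak h^{*}$, while the topology of the ambient surface near $p_{k}$ forces $\mathfrak h^{*}$ to be a signed permutation matrix; together these give $\mathfrak h^{*}=\pm\mathrm{Id}$. For (1) and (2): by Theorem~\ref{Zariski} the nodal separators $S$ and $\widetilde S$ are equisingular, so there is a proximity-preserving order isomorphism $\phi\colon\mathcal N_{p}(S)\to\mathcal N_{\tilde p}(\widetilde S)$, necessarily unique since both chains are linearly ordered, hence $\phi(p_{j})=\tilde p_{j}$ for all $j$. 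As $p_{k}=E_{k}\cap E_{l}$ is proximate to $p_{l-1}$, its image $\tilde p_{k}$ is proximate to $\tilde p_{l-1}$, i.e.\ $\tilde p_{k}\in\widetilde E_{l}$; combined with $\tilde p_{k}\in\widetilde E_{\tilde l}$ this forces $\tilde l=l$. Moreover $p_{k+1}\in E_{l}$ if and only if $\lambda>1$, and $p_{k+1}\in E_{l}$ if and only if $\tilde p_{k+1}\in\widetilde E_{l}$ by equisingularity, so $\lambda>1$ if and only if $\tilde\lambda>1$; interchanging $x\leftrightarrow y$ and $\tilde x\leftrightarrow\tilde y$ simultaneously if necessary, we may assume $\lambda,\tilde\lambda>1$, and then Proposition~\ref{caso lineal}, applied to $S$ at $p_{k}$ and $\widetilde S$ at $\tilde p_{k}$ (still equisingular), gives $\tilde\lambda=\lambda$.

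For (3), first set up the homology. Near $p_{k}$ one has $S\setminus\{p_{k}\}\cong\partial\mathbb D\times\partial\mathbb D\times(0,r]$, which deformation retracts onto the torus $\partial\mathbb D\times\partial\mathbb D$; thus $H_{1}(S\setminus\{p_{k}\})\cong\mathbb Z^{2}$ with basis $\alpha$, the class of $\{|x|=r\}$ inside a slice $\{y=\text{const}\}$ (the boundary of a holomorphic disc meeting $E_{k}=\{x=0\}$ transversally at one point), and $\beta$, the class of $\{|y|=r^{\lambda}\}$ inside a slice $\{x=\text{const}\}$ (the boundary of such a disc meeting $E_{l}=\{y=0\}$); likewise $\widetilde\alpha,\widetilde\beta$ for $\widetilde S$, and under the identification $(x,y)\simeq(\tilde x,\tilde y)$ we regard $\mathfrak h^{*}\in\mathrm{GL}(2,\mathbb Z)$. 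The leaves of the Levi foliation retract onto the lines of slope $\lambda$ on the torus and $\mathfrak h$ carries leaves to leaves; lifting to the universal cover exactly as in Lemma~\ref{ass} and using $\tilde\lambda=\lambda$ shows that $\mathfrak h^{*}(1,\lambda)$ is parallel to $(1,\lambda)$, i.e.\ $(1,\lambda)$ is an eigenvector of $\mathfrak h^{*}$. On the other hand — and here it is convenient to work near $p$ downstairs, where $\mathfrak h$ is a genuine homeomorphism of neighborhoods and $(W,S\cap W)$ is homeomorphic to the cone on a Heegaard torus in $S^{3}$ — the complement $W\setminus S$ has exactly two components $R_{1},R_{2}$, and the deformation retraction $(x,y)\mapsto(0,y)$ of $R_{1}\setminus\{p\}$ onto a circle makes $\alpha$ nullhomotopic while $\beta$ becomes a generator, with the roles of $\alpha$ and $\beta$ exchanged for $R_{2}$ via $(x,y)\mapsto(x,0)$. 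Since $\mathfrak h$ sends $\{R_{1},R_{2}\}$ onto $\{\widetilde R_{1},\widetilde R_{2}\}$, it carries $\{\mathbb Z\alpha,\mathbb Z\beta\}$ onto $\{\mathbb Z\widetilde\alpha,\mathbb Z\widetilde\beta\}$; being unimodular, $\mathfrak h^{*}$ is therefore a signed permutation matrix.

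A signed permutation matrix in $\mathrm{GL}(2,\mathbb Z)$ other than $\pm\mathrm{Id}$ has real eigendirections only of slope $0$, $\infty$ or $\pm1$ (or none at all); since the eigenvector $(1,\lambda)$ has irrational positive slope, $\mathfrak h^{*}=\pm\mathrm{Id}$. To match the sign with orientations, note that $\mathfrak h$ restricted to a Levi leaf is a homeomorphism of complex — hence oriented — surfaces, so it is orientation-preserving on all leaves or orientation-reversing on all leaves. In the injective parametrization $s\mapsto(x_{0}e^{s},y_{0}e^{\lambda s})$ of a leaf, the complex orientation is $d(\operatorname{Re}s)\wedge d(\operatorname{Im}s)$, where increasing $\operatorname{Re}s$ increases the distance $|x|$ to the singular point while increasing $\operatorname{Im}s$ moves in the $+(1,\lambda)$ direction of the torus; since $\mathfrak h$ sends the end $|x|\to0$ of a leaf to the corresponding end of its image, it preserves (resp.\ reverses) the orientation of the leaves exactly when it preserves (resp.\ reverses) the $(1,\lambda)$ direction, i.e.\ exactly when $\mathfrak h^{*}=\mathrm{Id}$ (resp.\ $\mathfrak h^{*}=-\mathrm{Id}$).

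The main obstacle is the second constraint above. The homology action is rigid only because a side of $S$ is a solid handlebody in the ambient surface with a prescribed meridian, so that $\mathfrak h$ — an \emph{ambient} homeomorphism, not merely a homeomorphism of the foliated $3$-manifold $S$ — must send meridian to meridian. Without this ambient input, for a quadratic-irrational $\lambda$ a hyperbolic element of $\mathrm{SL}(2,\mathbb Z)$ fixing the line through $(1,\lambda)$ would be admissible at the level of the foliated torus alone, and the statement would be false; pinning down the correct model of a side of $S$ near $p_{k}$ and checking that $\mathfrak h$ respects it is the heart of the argument. The reduction to $\lambda>1$ and the final sign computation are routine bookkeeping.
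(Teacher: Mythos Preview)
Your proofs of (1) and (2) via equisingularity and Proposition~\ref{caso lineal} are correct and match the paper's approach. Your strategy for (3) is genuinely different from the paper's (which recycles the curve-approximation machinery of Proposition~\ref{nice} and Section~\ref{proofAC}), and the eigenvector constraint you extract from Lemma~\ref{ass} is valid. However, the second constraint---that $\mathfrak h^{*}$ is a signed permutation in the basis $(\alpha,\beta)$---is \emph{false} as stated, and the argument you give for it breaks down.

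The issue is that $(x,y)$ are coordinates at $p_{k}$ \emph{after} $k$ blow-ups, not at $p$. Your ``retraction $(x,y)\mapsto(0,y)$'' lands on $E_{k}=\{x=0\}$, which is a component of the exceptional divisor and collapses to the single point $p$ under $\pi$; it does not retract $R_{1}\setminus\{p\}$ onto a circle. Concretely, take $k=2$ with $p_{2}=E_{1}\cap E_{2}$: in suitable charts $\pi(x,y)=(x^{2}y,xy)$, so in the downstairs basis $(\alpha',\beta')$ one has $\alpha=2\alpha'+\beta'$ and $\beta=\alpha'+\beta'$. The meridians of the two complementary solid tori of $S$ in $W$ are $\alpha'$ and $\beta'$, i.e.\ $\alpha-\beta$ and $-\alpha+2\beta$ in your basis---\emph{not} $\alpha$ and $\beta$. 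In particular $\alpha$ is \emph{not} nullhomotopic in either $R_{1}$ or $R_{2}$.

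Your idea is salvageable. The ambient argument does show that $\mathfrak h^{*}$ is a signed permutation in the \emph{downstairs} meridian basis $(\alpha',\beta')$, hence has finite order. Equisingularity (already established) guarantees that the change-of-basis matrix $P$ from $(\alpha,\beta)$ to $(\alpha',\beta')$ is the same on both sides, so $\mathfrak h^{*}=P^{-1}MP$ in the upstairs basis still has finite order in $GL(2,\mathbb Z)$. Its eigenvalue on $(1,\lambda)$ is then a real root of unity, i.e.\ $\pm1$; writing $a+b\lambda=\pm1$ and $c+d\lambda=\pm\lambda$ with $a,b,c,d\in\mathbb Z$ and $\lambda$ irrational forces $b=c=0$, $a=d=\pm1$, hence $\mathfrak h^{*}=\pm\mathrm{Id}$. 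You should replace the incorrect meridian identification by this finite-order argument.
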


\begin{proof} Item \ref{it1} follows directly from the equisingularity of $S$ and $\widetilde{S}$.   We return to the ideas and notations of Section \ref{proofAC}. From the final of the proof of  Proposition \ref{aproximacion por curvas} we deduce  that the curves $$\{(z^m,z^n):|z|<1\}\textrm{ at $p_k\in M$ }$$
and $$\{(\mu_0 z^{\tilde{m}},\nu_0z^{\tilde{n}}):|z|<1\} \textrm{ at $\tilde{p}_k\in\widetilde{M}$ }$$ are equisingular.  But this can happen only if we have $\frac{\tilde{n}}{\tilde{m}}=\frac{n}{m}$ or $\frac{\tilde{n}}{\tilde{m}}=\frac{m}{n}$, so 
\begin{align*}\frac{cm+dn}{am+bn}=\frac{n}{m}\textrm{ or }&\frac{cm+dn}{am+bn}=\frac{m}{n},\\cm^2-bn^2+(d-a)mn=0 \textrm{ or }& dn^2-am^2+(c-b)mn=0.\end{align*} Since  $\frac{n}{m}$ is any irreducible fraction close enough to $\lambda$, we conclude that $c=b=0,a=d$ or $a=d=0, c=b$. Thus,  $$\tilde{\lambda}=\frac{c+d\lambda}{a+b\lambda}\in\{\lambda,\frac{1}{{\lambda}}\}.$$ By the equisingularity of $S$ and $\widetilde{S}$ we have that $S$ is tangent to $E_l$ if and only if $\widetilde{S}$ is tangent to $\widetilde{E}_l$, so we  have $\lambda>1$ if and only if $\tilde{\lambda}>1$. Therefore  $\tilde{\lambda}=\lambda$ and  \begin{align*} \begin{pmatrix}a&b\\ c&d\\ \end{pmatrix}&=\pm\textrm{id}.\end{align*}  From the construction of the  map $\mathfrak{h}_1$ given by Proposition \ref{nice} it is easy to see that 
\begin{enumerate}
\item $\mathfrak{h}_1$ induces the same map $\mathfrak{h}^*$;
\item $\mathfrak{h}_1$ preserves the orientation of  Levi leaves if and only if $\mathfrak{h}$ do.
\end{enumerate} From the proof of Lemma \ref{geometrico} we see that the map   $\mathfrak{h}^*$ is given by the matrix $\begin{pmatrix}a&b\\ c&d\\ \end{pmatrix}$, so $\mathfrak{h}^*=\pm\textrm{id}$. Finally it is easy to see from item \ref{item6} of Proposition \ref{nice} that $\mathfrak{h}_1$ preserves the orientation of the leaves if  $\begin{pmatrix}a&b\\ c&d\\ \end{pmatrix}=\textrm{id}$ and reverses them if $\begin{pmatrix}a&b\\ c&d\\ \end{pmatrix}=-\textrm{id}$.
\end{proof}

\section{Topological equivalence of holomorphic foliations and invariance of nodal separators}\label{fol1}

Let  $\mathcal{F}$ and $\widetilde{\mathcal{F}}$ be holomorphic foliations with isolated singularities at $0\in\mathbb{C}^2$. Suppose that
$\mathcal{F}$ and $\widetilde{\mathcal{F}}$ are {topologically equivalent} (at $0\in\mathbb{C}^2$), that is,  there is an orientation preserving homeomorphism $\mathfrak{h}:{\mathfrak{U}}\rightarrow\widetilde{{\mathfrak{U}}}$, $\mathfrak{h}(0)=0$  between neighborhoods of $0\in\mathbb{C}^2$,  taking leaves of  $\mathcal{F}$ to leaves of $\widetilde{\mathcal{F}}$. Such a homeomorphism is called a topological equivalence  between $\mathcal{F}$ and $\widetilde{\mathcal{F}}$.

\begin{thm}\label{invariancia}  Let $\mathfrak{h}:{\mathfrak{U}}\rightarrow\tilde{{\mathfrak{U}}}$, $\mathfrak{h}(0)=0$  be a topological equivalence between  $\mathcal{F}$ and $\widetilde{\mathcal{F}}$.  Let $S$ be a nodal separator of $\mathcal{F}$ at $0\in\mathbb{C}^{2}$. Then $\mathfrak{h}(S)$ is a nodal separator of $\widetilde{\mathcal{F}}$ at $0\in\mathbb{C}^2$.
\end{thm}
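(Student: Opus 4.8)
The plan is to reduce the statement to the purely dynamical description of a nodal separator as the closure of a ``non-separatrix'' leaf of a node, and then transport this description through the resolution. First I would recall that, by definition, $S$ being a nodal separator of $\mathcal{F}$ at $0$ means there is a finite composition of blow ups $\pi\colon M\to(\mathbb{C}^2,0)$ and a node $\mathfrak{n}$ of the strict transform $\mathcal{F}^{*}=\pi^{*}\mathcal{F}$ at a point $q\in\pi^{-1}(0)$ such that the strict transform $S^{*}$ of $S$ is a nodal separator of $\mathcal{F}^{*}$ at $q$; by Theorem \ref{cr} (or rather by the very definition recalled in the introduction) $S^{*}$ is, in linearizing coordinates at $q$, the closure of a leaf of the node other than the two separatrices $E_{1},E_{2}$ through $q$. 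The point is that this is a \emph{dynamical} characterization: $S^{*}\setminus\{q\}$ is (a piece of) a leaf of $\mathcal{F}^{*}$, and $q$ is a node of $\mathcal{F}^{*}$.

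The hard part is that the topological equivalence $\mathfrak{h}$ between $\mathcal{F}$ and $\widetilde{\mathcal{F}}$ lives downstairs at $(\mathbb{C}^2,0)$ and need not lift to the resolution. To get around this I would argue as follows. It is a classical fact (Camacho--Sad type arguments, or the theory developed around Theorem \ref{cr}) that a topological equivalence at $(\mathbb{C}^2,0)$ induces a bijection between the branches of the separatrix of $\mathcal{F}$ and those of $\widetilde{\mathcal{F}}$, and more precisely between the irreducible components of the exceptional divisors in the \emph{minimal} resolutions that carry singularities, together with their mutual incidences; this is exactly the content of the reduction-of-singularities invariance that underlies Theorem \ref{bijection}. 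Thus, enlarging $\pi$ and its analogue $\widetilde{\pi}$ so that both dominate the minimal resolutions and so that (by Remark \ref{corner}) $q$ is a corner whose two local separatrices are components $E_{1},E_{2}$ of the divisor, the combinatorial data single out a corresponding corner $\tilde q$ in the resolution of $\widetilde{\mathcal{F}}$ lying on the images of $E_{1},E_{2}$. One then shows $\tilde q$ is itself a node of $\widetilde{\mathcal{F}}^{*}$: this is where Theorem \ref{bijection} does the work — it asserts precisely that nodes correspond to nodes under a topological equivalence, and that nodal separators issuing from $\mathfrak{n}$ are carried to nodal separators issuing from $\mathfrak{h}_{*}(\mathfrak{n})$.

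With that in hand the conclusion is almost formal. Let $\mathfrak{n}=q$ and $\tilde{\mathfrak{n}}=\mathfrak{h}_{*}(\mathfrak{n})=\tilde q$. Since $S^{*}\setminus\{q\}$ is a leaf of $\mathcal{F}^{*}$ accumulating only on $q$, its image under $\mathfrak{h}$ (read through $\pi$ and $\widetilde{\pi}$, i.e. the leaf $\mathfrak{h}(\pi(S^{*}\setminus\{q\}))$ pulled back by $\widetilde{\pi}$) is a leaf of $\widetilde{\mathcal{F}}$, resp. of $\widetilde{\mathcal{F}}^{*}$, accumulating only on $\tilde q$; being a leaf of a node other than the two separatrices, its closure near $\tilde q$ is by definition a nodal separator of $\widetilde{\mathcal{F}}^{*}$ at $\tilde q$. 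Pushing down by $\widetilde{\pi}$, $\mathfrak{h}(S)$ is therefore a nodal separator of $\widetilde{\mathcal{F}}$ at $0$, which is the assertion. The one genuinely delicate point to write carefully is the identification of the leaf of $\widetilde{\mathcal{F}}^{*}$ with $\mathfrak{h}(S)$ upstairs, i.e. checking that $\mathfrak{h}$ maps the local leaf $L$ with $0\in\overline L$ underlying $S$ to a local leaf $\widetilde L$ whose strict transform in the resolution of $\widetilde{\mathcal{F}}$ accumulates on the \emph{node} $\tilde q$ rather than on some other singularity; here one uses that $\mathfrak{h}$ preserves the incidence pattern of the divisor together with Theorem \ref{cr} applied to $\overline{\widetilde L}$, which forces $\overline{\widetilde L}$ to contain a separatrix or a nodal separator, and the divisor combinatorics plus Theorem \ref{bijection} exclude everything except the nodal separator at $\tilde q$.
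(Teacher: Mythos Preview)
Your argument is circular. You invoke Theorem~\ref{bijection} to produce the node $\tilde q=\mathfrak{h}_*(\mathfrak{n})$ and to guarantee that nodal separators are sent to nodal separators, but in the paper Theorem~\ref{bijection} is proved \emph{after} Theorem~\ref{invariancia}: it is obtained as a direct consequence of Theorem~\ref{parejas de nodos}, whose proof in turn uses Theorem~\ref{invariancia}. So you are assuming the conclusion. Relatedly, the ``classical fact'' you appeal to---that a topological equivalence at $(\mathbb{C}^2,0)$ induces a bijection of divisor components preserving incidences---is precisely what is \emph{not} available in general; the paper stresses that $\mathfrak{h}$ need not lift to the resolution (this is the Mar\'in--Mattei result, valid only under Generic General Type hypotheses). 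You cannot use it here. There is also a smaller slip: $S^*\setminus\{q\}$ is not a single leaf of $\mathcal{F}^*$; it is a $3$-real-dimensional set foliated by uncountably many dense leaves.

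The paper's proof avoids all of this and is much more elementary. Since $\mathfrak{h}(S)$ is a closed connected $\widetilde{\mathcal{F}}$-invariant set strictly containing $\{0\}$, Theorem~\ref{cr} gives a separatrix or a nodal separator $\widetilde S\subset\mathfrak{h}(S)$; the separatrix case is ruled out because $\mathfrak{h}^{-1}$ of a separatrix would be a proper closed invariant subset of $S$, impossible by minimality of the Levi foliation on $S$ (and dimension). Then one only needs $\mathfrak{h}(S)\subset\widetilde S$ near $0$: choose infinitesimal coordinates so that $\widetilde S=\{|\tilde y|=|\tilde x|^{\tilde\lambda}\}$ on a bidisc, pick $p\in S$ with $\mathfrak{h}(p)\in\widetilde S$, and note that the $\mathcal{F}$-leaf $L$ through $p$ is dense in $S$ while $\mathfrak{h}(L)$ is contained in the $\widetilde{\mathcal{F}}$-leaf through $\mathfrak{h}(p)$, hence in $\widetilde S$. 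No resolution combinatorics, no Theorem~\ref{bijection}.
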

\begin{proof} From Theorem \ref{cr} we deduce that $\mathfrak{h}(S)$ contains a nodal separator $\widetilde{S}$ of $\widetilde{\mathcal{F}}$ at $0\in\mathbb{C}^2$.There are infinitesimal coordinates $(\widetilde{x},\widetilde{y})$  such that $$\widetilde{S}=\{|\widetilde{y}|=|\widetilde{x}|^{\widetilde{\lambda}}: |\widetilde{x}|< 1\}.$$  It is sufficient to prove that there is some neighborhood $\widetilde{\mathfrak{U}}_0$ of $0\in\mathbb{C}^2$ such that $\mathfrak{h}(S)\cap\widetilde{\mathfrak{U}}_0$ is contained in $\widetilde{S}$. Take a neighborhood $\widetilde{\mathfrak{U}}_0$ of $0\in\mathbb{C}^2$ with the following properties:

\begin{enumerate}

\item $\widetilde{S}_0:=\widetilde{S}\cap\widetilde{\mathfrak{U}}_0\subset\{|\widetilde{y}|=|\widetilde{x}|^{\widetilde{\lambda}}:|\widetilde{x}|<1/2\};$
\item in infinitesimal coordinates $(x,y)$, we have
$$S_0:=S\cap\mathfrak{h}^{-1}(\widetilde{\mathfrak{U}}_0)=\{|y|=|x|^\lambda:|x|<1\}.$$

\end{enumerate}
Take a point $p\in S_0$ such that $\widetilde{p}:=\mathfrak{h}({p})$ is contained in $\widetilde{S}_0$. Let $L$ be the leaf of $\mathcal{F}|_{S_0}$ through $p$. Since $\mathfrak{h}(S)\cap\widetilde{\mathfrak{U}}_0=\mathfrak{h}(S_0)$ and $L$ is dense in $S_0$, it is sufficient  to prove that $\mathfrak{h}(L)$ is contained in $\widetilde{S}$. Let $\widetilde{L}$ be the leaf of $\widetilde{\mathcal{F}}|_{\widetilde{S}_0}$ through $\widetilde{p}$. By item 1 above we deduce that $\widetilde{L}$ is also the leaf of $\widetilde{\mathcal{F}}|_{\widetilde{\mathfrak{U}}_0}$  through $\widetilde{p}$.  Since $\mathfrak{h}(L)$ is contained in $\widetilde{\mathfrak{U}}_0$, then it is contained in  the leaf of $\widetilde{\mathcal{F}}|_{\widetilde{\mathfrak{U}}_0}$ through $\widetilde{p}$, so  $\mathfrak{h}(L)\subset \widetilde{L}$. Therefore   $\mathfrak{h}(L)\subset\widetilde{S}$.
\end{proof}

\begin{thm}\label{parejas de nodos}  Let $\mathfrak{h}:{\mathfrak{U}}\rightarrow\widetilde{{\mathfrak{U}}}$, $\mathfrak{h}(0)=0$  be a topological equivalence between  $\mathcal{F}$ and $\widetilde{\mathcal{F}}$.  Let $S_1$ and $S_2$ be  nodal separators of $\mathcal{F}$ at $0\in\mathbb{C}^{2}$  issuing from the same node in the resolution of $\mathcal{F}$.   Then $\mathfrak{h}(S_1)$ and $\mathfrak{h}(S_2)$ are nodal separators issuing from the same node in the resolution of $\widetilde{\mathcal{F}}$.
\end{thm}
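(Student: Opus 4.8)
The plan is to join $S_1$ and $S_2$ by a continuous one-parameter family of nodal separators of $\mathcal{F}$ at $0$, to transport that family by $\mathfrak{h}$ (invoking Theorem~\ref{invariancia}), and then to prove that the node from which a nodal separator issues is locally constant along such a family. Since $S_1$ and $S_2$ issue from the same node $\mathfrak{n}$ of $\mathcal{F}$, located at a point $q$ of the resolution of $\mathcal{F}$, I fix linearizing coordinates $(x,y)$ at $q$ in which $\mathcal{F}$ reads $x\frac{\partial}{\partial x}+\lambda y\frac{\partial}{\partial y}$ with $\lambda\in(0,\infty)\setminus\mathbb{Q}$; then, in these coordinates, the strict transforms of $S_1$ and $S_2$ near $q$ are $\{|y|=c_1|x|^{\lambda}\}$ and $\{|y|=c_2|x|^{\lambda}\}$ for some $c_1,c_2>0$, and after interchanging $S_1$ and $S_2$ if necessary we may assume $c_1\le c_2$ (if $c_1=c_2$ then $S_1=S_2$ and there is nothing to prove). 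Fixing a small polydisc $\Delta$ at $q$, for each $c\in[c_1,c_2]$ let $S_c$ be the nodal separator of $\mathcal{F}$ at $0$ obtained by projecting $\{|y|=c|x|^{\lambda}\}\cap\Delta$; for $\Delta$ small enough this is well defined for every $c$ in the compact interval, $S_{c_1}=S_1$, $S_{c_2}=S_2$, and $c\mapsto S_c$ is continuous for the Hausdorff distance on compact subsets of a fixed neighborhood of $0$. As $\mathfrak{h}$ is uniformly continuous on a compact neighborhood of $0$, the family $T_c:=\mathfrak{h}(S_c)$ is again Hausdorff-continuous; by Theorem~\ref{invariancia} each $T_c$ is a nodal separator of $\widetilde{\mathcal{F}}$ at $0$, and $T_{c_1}=\mathfrak{h}(S_1)$, $T_{c_2}=\mathfrak{h}(S_2)$.

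So everything reduces to the following lemma: if $\{T_t\}_{t\in[0,1]}$ is a Hausdorff-continuous family of nodal separators of $\widetilde{\mathcal{F}}$ at $0$, then all the $T_t$ issue from the same node. The resolution of $\widetilde{\mathcal{F}}$ is a finite composition of blow ups, hence has only finitely many nodes, and each nodal separator of $\widetilde{\mathcal{F}}$ at $0$ issues from exactly one of them, namely the node at the unique point of the exceptional divisor lying on its strict transform. The core observation is that, for every node $\mathfrak{p}$, the set $A_{\mathfrak{p}}=\{t:\ T_t\text{ issues from }\mathfrak{p}\}$ is closed: if $t_n\to t_*$ with $t_n\in A_{\mathfrak{p}}$ then, in fixed linearizing coordinates at the point $q_{\mathfrak{p}}$ of $\mathfrak{p}$, the strict transform of $T_{t_n}$ near $q_{\mathfrak{p}}$ is $\{|y|=a_n|x|^{\mu}\}$ for some $a_n>0$, where $\mu\in(0,\infty)\setminus\mathbb{Q}$ is the eigenvalue of $\mathfrak{p}$; the numbers $a_n$ cannot leave every compact subset of $(0,\infty)$, for otherwise a subsequence of the $T_{t_n}$ would Hausdorff-converge to the projection of a local separatrix at $q_{\mathfrak{p}}$ --- which is either the point $0$ or a germ of analytic curve, in any case not a nodal separator --- contradicting $T_{t_n}\to T_{t_*}$; hence along a subsequence $a_n\to a_*\in(0,\infty)$ and $T_{t_*}$ is the nodal separator of $\mathfrak{p}$ attached to $a_*$, so $t_*\in A_{\mathfrak{p}}$. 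Granting this, $A_{\mathfrak{m}}$ is closed and nonempty, where $\mathfrak{m}$ is the node of $T_0$; and it is open, because if $t_n\to t_*\in A_{\mathfrak{m}}$ with $t_n\notin A_{\mathfrak{m}}$ then, by finiteness of the set of nodes, a subsequence lies in a single $A_{\mathfrak{m}'}$ with $\mathfrak{m}'\ne\mathfrak{m}$, and closedness of $A_{\mathfrak{m}'}$ would place $t_*$ in $A_{\mathfrak{m}'}$ as well, contradicting uniqueness of the node of $T_{t_*}$. By connectedness of $[0,1]$ we get $A_{\mathfrak{m}}=[0,1]$, which proves the lemma.

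Applying the lemma to $\{T_c\}_{c\in[c_1,c_2]}$ shows that $\mathfrak{h}(S_1)=T_{c_1}$ and $\mathfrak{h}(S_2)=T_{c_2}$ issue from the same node of $\widetilde{\mathcal{F}}$, which is exactly the assertion. I expect the delicate point to be the closedness step of the lemma, that is, ruling out the degenerations $a_n\to 0$ and $a_n\to\infty$; this is precisely where Theorem~\ref{invariancia} enters in an essential way, since it guarantees that the Hausdorff limit $T_{t_*}$ is a genuine nodal separator --- a real three-dimensional set with an isolated singularity at $0$ --- and therefore cannot coincide with the lower-dimensional projection of a separatrix, nor with the point $0$. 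The rest is soft: uniform continuity of $\mathfrak{h}$, finiteness of the resolution, and connectedness of the parameter interval.
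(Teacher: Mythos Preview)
Your argument is correct and follows the same overall strategy as the paper: connect $S_1$ to $S_2$ by the natural one-parameter family of nodal separators at the common node, push the family through $\mathfrak{h}$, invoke Theorem~\ref{invariancia} to know each image is again a nodal separator, and then use a connectedness argument to conclude that the associated node on the $\widetilde{\mathcal{F}}$ side cannot change along the family.

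The only noteworthy difference is in how local constancy of the node map is obtained. The paper argues \emph{openness} directly: if $S'$ is close to $S$, then $\mathfrak{h}(S')$ is close to $\mathfrak{h}(S)$ and therefore, near the node $\mathfrak{n}(S)$, already contains a nodal separator issuing from $\mathfrak{n}(S)$; since $\mathfrak{h}(S')$ is itself a nodal separator (Theorem~\ref{invariancia}), it must coincide with that one, so $\mathfrak{n}(S')=\mathfrak{n}(S)$. You instead prove \emph{closedness} of each fiber $A_{\mathfrak{p}}$ by ruling out the degenerations $a_n\to 0$ and $a_n\to\infty$, and then recover openness from the finiteness of the set of nodes. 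Both routes are valid; the paper's is slightly shorter because it never needs to analyze the boundary degenerations, while yours makes the role of Theorem~\ref{invariancia} in blocking those degenerations more explicit.
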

\begin{proof}

Let $S$ be any nodal separator of $\mathcal{F}$. Denote by $\mathfrak{n}(S)$ the node in the resolution of $\widetilde{\mathcal{F}}$ associated to the nodal separator  $\mathfrak{h}(S)$. It is easy to see   that, if $S'$ is a nodal separator close enough to $S$, then $\mathfrak{h}(S')$ is close to $\mathfrak{h}(S)$  and  contains a nodal separator also issuing  from $\mathfrak{n}(S)$. Therefore, from Theorem \ref{invariancia} we deduce that $\mathfrak{n}(S')=\mathfrak{n}(S)$.  Thus, the map $\mathfrak{n}$ is locally constant and the theorem follows by an argument of connectedness.
\end{proof}
 \noindent\emph{Proof of Theorem \ref{bijection}.} It is a direct consequence of Theorem \ref{parejas de nodos}. 

\noindent\emph{Proof of Theorem \ref{nodalext}.}
By \cite{rosas3} there exists a topological equivalence $\mathfrak{h}$ between $\mathcal{F}$ and $\widetilde{\mathcal{F}}$ which, after resolution, extends as a homeomorphism to a neighborhood of each linearizable or resonant singularity which is not a corner. We denote by $\mathcal{E}$ and $\widetilde{\mathcal{E}}$ the exceptional divisors in the resolutions of $\mathcal{F}$ and $\widetilde{\mathcal{F}}$, respectively. We use the same notation $\mathcal{F}$ for the foliation at $(\mathbb{C}^2,0)$ an its strict transform by the resolution map.  Let $p$ be a nodal corner point of $\mathcal{F}$ and let $\tilde{p}$ its corresponding nodal point in $\widetilde{\mathcal{F}}$ according to Theorem \ref{bijection}. By Theorem \ref{Zariski}, the nodal separators at $p$ and at $\tilde{p}$ are equisingular, so $p$ and $\tilde{p}$ have the same eigenvalue $\lambda>0$. There are holomorphic coordinates $(x,y)$ at $p$  and $(\tilde{x}, \tilde{y})$  at $\tilde{p}$ with the following properties:
\begin{enumerate}

\item $p\simeq (0,0)$, $\tilde{p}\simeq (0,0)$;
\item $\{x=0\}$ and $\{y=0\}$ are contained in different components of $\mathcal{E}$;
\item $\{\tilde{x}=0\}$ and $\{\tilde{y}=0\}$ are contained in different components of $\widetilde{\mathcal{E}}$;
\item $\mathcal{F}$ is defined by the vector field $x\frac{\partial}{\partial x} +\lambda y\frac{\partial}{\partial y}$;
\item $\widetilde{\mathcal{F}}$ is defined by the vector field $\tilde{x}\frac{\partial}{\partial \tilde{x}} +\lambda \tilde{y}\frac{\partial}{\partial \tilde{y}}$.

\end{enumerate} We denote by $\mathcal{E}_1$  and $\mathcal{E}_2$ the connected components of $\mathcal{E}$ containing  $\{y=0\}$ and $\{x=0\}$, respectively. Analogously define $\widetilde{\mathcal{E}}_1$  and $\widetilde{\mathcal{E}}_2$.
We will use the ideas used in \cite{rosas3} to construct the topological equivalence near a nodal non corner point.  We will think for a  moment that $p$ is not a corner: think that $\{x=0\}$ is a separatrix and that the exceptional divisor is reduced to $\mathcal{E}_1$. The fact that $\{x=0\}$ is a separatrix mapped into $\{\tilde{x}=0\}$ is only used to remove the homological obstruction to the extension of  $\mathfrak{h}$, as we explain in the sequel. Set $$T=\{0<|x|\le\varepsilon, 0<|y|\le\varepsilon\}$$ and $$\widetilde{T}=\{0<|\tilde{x}|\le\varepsilon, 0<|\tilde{y}|\le\varepsilon\}$$ for some $\varepsilon>0$. The map  $\mathfrak{h}$ induces an isomorphism $\mathfrak{h}^*$ between $H_1 (T)$ and $H_1(\widetilde{T})$. In a natural way we can think that  $H_1 (T)=H_1(\widetilde{T})$.  Then, the fact that $\{x=0\}$ is a separatrix is used in \cite{rosas3} to prove that the isomorphism $\mathfrak{h}^*$ is the identity or the inversion isomorphism according to $\mathfrak{h}$ preserves or reverses the natural orientation of the leaves. In our case we already have this property, by Proposition \ref{best}. Then, given $\epsilon>0$,  as in  \cite[Theorem 7 and Section 7]{rosas3} we find some numbers $a_1,b_1,\tilde{a}_1,\tilde{b}_1\in (0,\epsilon)$ and construct a homeomorphism $\mathfrak{h}_1$ with the following properties:
\begin{enumerate}
\item $\mathfrak{h}_1$ is defined on $$V_1=W_1\backslash\Big(\{|x|< a_1, |y|< b_1\}\cup\mathcal{E}_1\Big),$$ where $W_1$ is a neighborhood of $\mathcal{E}_1$;
\item  $\mathfrak{h}_1$ maps $V_1$  onto $$\widetilde{W}_1\backslash \Big(\{|\tilde{x}|< \tilde{a}_1, |\tilde{y}|< \tilde{b}_1\})\cup \widetilde{\mathcal{E}}_1\Big),$$ where $\widetilde{W}_1$ is a neighborhood of $\widetilde{\mathcal{E}}_1$;
\item $\mathfrak{h}_1$ maps leaves of $\mathcal{F}$ to leaves of $\widetilde{\mathcal{F}}$;
\item  $\mathfrak{h}_1(\zeta)$ tends to $\widetilde{\mathcal{E}}_1$ as $\zeta$ tends to ${\mathcal{E}}_1$;
\item $\mathfrak{h}_1$ maps the set $$R_1=\{|x|=a_1, 0<|y|\le b_1\}$$ onto the set $$\widetilde{R}_1=\{|\tilde{x}|=\tilde{a}_1, 0<|\tilde{y}|\le \tilde{b}_1\}$$ conjugating the one dimensional foliations induced by $\mathcal{F}$ and $\widetilde{\mathcal{F}}$  on $R_1$ and $\widetilde{R}_1$;
\item $\mathfrak{h}_1$ maps each punctured disc $$\{x=u, 0<|y|\le b_1\}, |u|=a_1$$ onto a punctured disc  $$\{\tilde{x}=\tilde{u}, 0<|\tilde{y}|\le \tilde{b}_1\}, |\tilde{u}|=\tilde{a}_1,$$ so $\mathfrak{h}_1$ extends to $\overline{R_1}$;
\item close to the divisor and outside $$\{|x|\le\epsilon,|y|\le\epsilon\}\cup\mathfrak{h}^{-1}\big(\{|\tilde{x}|\le\epsilon,|\tilde{y}|\le\epsilon\}\big)$$ we have $\mathfrak{h}_1=\mathfrak{h}$;
\item $\mathfrak{h}_1$ induces the same map $\mathfrak{h}^*$.
\end{enumerate}  

In the same way, we find numbers $a_1,b_1,\tilde{a}_1,\tilde{b}_1\in (0,\epsilon)$ and construct a homeomorphism $\mathfrak{h}_2$ with the following properties:

\begin{enumerate}
\item $\mathfrak{h}_2$ is defined on $$V_2=W_ 2\backslash\Big(\{|x|< a_ 2, |y|< b_ 2\}\cup\mathcal{E}_2\Big),$$ where $W_ 2$ is a neighborhood of $\mathcal{E}_ 2$;
\item  $\mathfrak{h}_ 2$ maps $V_ 2$  onto $$\widetilde{W}_ 2\backslash\Big(\{|\tilde{x}|< \tilde{a}_ 2, |\tilde{y}|< \tilde{b}_ 2\})\cup \widetilde{\mathcal{E}}_2\Big),$$ where $\widetilde{W}_ 2$ is a neighborhood of $\widetilde{\mathcal{E}}_ 2$;
\item $\mathfrak{h}_ 2$ maps leaves of $\mathcal{F}$ to leaves of $\widetilde{\mathcal{F}}$;
\item  $\mathfrak{h}_ 2(\zeta)$ tends to $\widetilde{\mathcal{E}}_2$ as $\zeta$ tends to ${\mathcal{E}}_2$;
\item $\mathfrak{h}_ 2$ maps the set $$R_ 2=\{|y|=b_ 2, 0<|x|\le a_ 2\}$$ onto the set $$\widetilde{R}_ 2=\{|\tilde{y}|=\tilde{b}_ 2, 0<|\tilde{x}|\le \tilde{a}_ 2\}$$ conjugating the one dimensional foliations induced by $\mathcal{F}$ and $\widetilde{\mathcal{F}}$  on $R_ 2$ and $\widetilde{R}_ 2$;
\item $\mathfrak{h}_ 2$ maps each punctured disc $$\{y=u, 0<|x|\le a_ 2\},|u|=b_ 2$$ onto a punctured disc  $$\{\tilde{y}=\tilde{u}, 0<|\tilde{x}|\le \tilde{a}_ 2\}, |\tilde{u}|=\tilde{b}_ 2,$$ so $\mathfrak{h}_ 2$ extends to $\overline{R_ 2}$;
\item close to the divisor and outside $$\{|x|\le\epsilon,|y|\le\epsilon\}\cup\mathfrak{h}^{- 1}\big(\{|\tilde{x}|\le\epsilon,|\tilde{y}|\le\epsilon\}\big)$$ we have $\mathfrak{h}_ 2=\mathfrak{h}$;
\item $\mathfrak{h}_ 2$ induces the same map $\mathfrak{h}^*$.
\end{enumerate}  
In fact, the numbers $a_j,b_j,\tilde{a}_j,\tilde{b}_j$ are arbitrary whenever they are small enough, so we can suppose that  $b_2=b_1$ and $\tilde{b}_2=\tilde{b}_1$. By reducing $W_2$ and $a_2$ if necessary we can assume the following additional properties:
\begin{enumerate}
\item $a_2<a_1$, $\tilde{a}_2<\tilde{a}_1$;
\item $V_1$ and $V_2$ are disjoint;
\item $\mathfrak{h}_1(V_1)$, $\mathfrak{h}_2(V_2)$ and  $\{|x|< \tilde{a}_ 1, |y|< \tilde{b}_ 1\}$ are pairwise disjoint.
\end{enumerate}

For all $s\in[0,1]$, set $\alpha_s=(1-s)a_1+sa_2$, $\tilde{\alpha}_s=(1-s)\tilde{a}_1+s\tilde{a}_2$ and consider the sets  \begin{align*}&T_s=\{|x|=\alpha_s,|y|=b_1\};\\
&\widetilde{T}_s=\{|\tilde{x}|=\tilde{\alpha}_s,|\tilde{y}|=\tilde{b}_1\}.\end{align*} Clearly we have the following:
\begin{enumerate}
\item $\mathfrak{h}_1$ conjugates the one-dimensional foliations on $T_{0}$ and $\widetilde{T}_{0}$;
\item $\mathfrak{h}_2$ conjugates the one-dimensional foliations on $T_{1}$ and $\widetilde{T}_{1}$. 
\end{enumerate}
\begin{lem}There exist  a continuous family of homeomorphisms $$h_s\colon T_s\to \widetilde{T}_s,\; s\in[0,1]$$ with $h_0=\mathfrak{h}_1$, $h_1=\mathfrak{h}_2$ and such that, for each $s\in[0,1]$, the homeomorphism
$h_s$ conjugates the one dimensional foliation induced by $\mathcal{F}$ on $T_s$ with the one dimensional foliation induced by $\widetilde{\mathcal{F}}$ on $\widetilde{T}_s$.
\end{lem}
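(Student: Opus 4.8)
The plan is to interpolate between $\mathfrak{h}_1$ and $\mathfrak{h}_2$ at the level of the foliated two-tori $T_s$ by first understanding the induced maps at the level of fundamental groups and lifting to the universal covers. Each $T_s$ is a two-torus carrying the linear foliation $|y|=b_1$-slice of $x\frac{\partial}{\partial x}+\lambda y\frac{\partial}{\partial y}$, which in the natural angular coordinates $(u,v)\in(\mathbb{R}/\mathbb{Z})^2$ is the linear foliation $dv-\lambda\,du=0$; similarly $\widetilde{T}_s$ carries $d\tilde v-\lambda\,d\tilde u=0$ (recall $\tilde\lambda=\lambda$ by Proposition \ref{best}). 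The homeomorphisms $\mathfrak{h}_1|_{T_0}$ and $\mathfrak{h}_2|_{T_1}$ conjugate these foliations, hence each lifts to an affine-type map of $\mathbb{R}^2$ of the form $W\mapsto A_0 W+c_0$, respectively $W\mapsto A_1 W+c_1$, where $A_0,A_1\in SL(2,\mathbb{Z})$ and, by the rigidity of Lemma \ref{ass} (or directly by Proposition \ref{best}), one has $A_0=A_1=\pm\mathrm{id}$, with the sign determined by whether $\mathfrak{h}$ preserves or reverses the orientation of the Levi leaves. Since this is a single global property of $\mathfrak{h}$, the signs for $\mathfrak{h}_1$ and $\mathfrak{h}_2$ agree: call the common matrix $A\in\{\mathrm{id},-\mathrm{id}\}$.

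Next I would write each $T_s$ and $\widetilde{T}_s$ explicitly. Using the angular coordinates, a point of $T_s$ is $(\alpha_s\eta,b_1\xi)$ with $\eta,\xi\in\partial\mathbb{D}$, and similarly for $\widetilde{T}_s$ with $(\tilde\alpha_s,\tilde b_1)$. On the universal cover the foliation $dv-\lambda\,du=0$ is preserved by any map $W\mapsto AW+c$ whenever $A(1,\lambda)$ is parallel to $(1,\lambda)$, which holds for $A=\pm\mathrm{id}$. So for each $s\in[0,1]$ I define $H_s(W)=AW+c_s$, where $c_s$ is chosen to depend continuously on $s$, to descend to a well-defined homeomorphism $T_s\to\widetilde{T}_s$ (this requires $A\in SL(2,\mathbb{Z})$, which holds), and to interpolate the boundary data: $H_0$ is the lift of $\mathfrak{h}_1|_{T_0}$ and $H_1$ is the lift of $\mathfrak{h}_2|_{T_1}$. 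Concretely, after reading off $\mathfrak{h}_1|_{T_0}$ and $\mathfrak{h}_2|_{T_1}$ in the form $(\alpha_0\eta,b_1\xi)\mapsto(\tilde\alpha_0\mu\eta^{a}\xi^{b},\tilde b_1\nu\eta^{c}\xi^{d})$ with $\begin{pmatrix}a&b\\c&d\end{pmatrix}=A$, the translation parts $c_0=(\arg\mu_0,\arg\nu_0)/2\pi$ and $c_1=(\arg\mu_1,\arg\nu_1)/2\pi$ are vectors in $\mathbb{R}^2/\mathbb{Z}^2$, and any continuous path $s\mapsto c_s$ in $\mathbb{R}^2$ lifting a path joining them produces the required family $h_s$.

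The main technical point — and the step I expect to be the real obstacle — is verifying that $\mathfrak{h}_1|_{T_0}$ and $\mathfrak{h}_2|_{T_1}$ really do induce the \emph{same} matrix $A$ on homology, so that the affine interpolation above is possible (if the two endpoints induced different linear parts, no path of homeomorphisms conjugating the foliations could connect them, since the linear part is a discrete invariant). This is where Proposition \ref{best} does the work: both $\mathfrak{h}_1$ and $\mathfrak{h}_2$ induce the \emph{same} map $\mathfrak{h}^*$ on $H_1$ (they are constructed, as recorded in their defining property lists, to induce the same $\mathfrak{h}^*$ as $\mathfrak{h}$), and Proposition \ref{best} identifies $\mathfrak{h}^*$ as $\pm\mathrm{id}$ depending only on the orientation behavior of $\mathfrak{h}$ on Levi leaves. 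Hence both endpoints have the same linear part, the translation parts live in a connected space $\mathbb{R}^2/\mathbb{Z}^2$, and the interpolation goes through. The remaining verifications — that each $H_s$ descends to a homeomorphism $T_s\to\widetilde{T}_s$ conjugating the respective one-dimensional foliations, and that $h_s$ depends continuously on $s$ with the correct endpoints — are then routine, exactly parallel to the computation carried out in the proof of Lemma \ref{geometrico}.
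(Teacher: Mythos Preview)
Your overall strategy matches the paper's: identify each $T_s$ and $\widetilde{T}_s$ with the standard torus carrying the linear foliation $dv-\lambda\,du=0$, lift the endpoint maps $\mathfrak{h}_1|_{T_0}$ and $\mathfrak{h}_2|_{T_1}$ to $\mathbb{R}^2$, observe that they share the same linear part $A=\pm\mathrm{id}$ (because both induce $\mathfrak{h}^*$), and then interpolate. The gap is in your claim that the lifts are genuinely affine, of the form $W\mapsto AW+c_j$. Lemma~\ref{ass} does \emph{not} assert this: it only gives the form
\[
H_j(u,v)=H_j(0,0)+A(u,v)+\kappa_j(u,v)\cdot(1,\lambda)
\]
with a nontrivial continuous $\mathbb{Z}^2$-periodic function $\kappa_j$. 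There is no reason to expect $\mathfrak{h}_1|_{T_0}$ or $\mathfrak{h}_2|_{T_1}$ (which come from the construction in \cite{rosas3}) to be of the monomial shape $(\eta,\xi)\mapsto(\mu\eta^a\xi^b,\nu\eta^c\xi^d)$. If you drop the $\kappa_j$ terms and interpolate only the constants $c_j=H_j(0,0)$, the resulting family has the wrong endpoints: $h_0$ is the affine approximation to $\mathfrak{h}_1|_{T_0}$, not $\mathfrak{h}_1|_{T_0}$ itself, and the gluing of $\mathfrak{h}_0$ along $\partial D$ in the next step of the proof of Theorem~\ref{nodalext} would fail to be continuous at $T_0$ and $T_1$.

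The paper repairs this by keeping the $\kappa$-terms and taking the convex combination of the full lifts, $H_s=(1-s)H_0+sH_1$. Because $H_0$ and $H_1$ share the same linear part $A$, this $H_s$ again has the form $H_s(0,0)+A(u,v)+\kappa_s(u,v)(1,\lambda)$ with $\kappa_s=(1-s)\kappa_0+s\kappa_1$; it is $\mathbb{Z}^2$-equivariant (so it descends to the torus), it maps leaves of $dv-\lambda\,du=0$ to leaves, and it is a homeomorphism (injectivity along each leaf holds because $H_0$ and $H_1$ have the same orientation on leaves, while the induced map on the leaf space $v-\lambda u$ is a translation for each $H_j$ and hence also for $H_s$). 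This yields the required family with the exact boundary conditions $h_0=\mathfrak{h}_1$ and $h_1=\mathfrak{h}_2$. Your argument becomes correct once you replace the purely affine interpolation by this convex combination of the actual lifts.
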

\begin{proof} Of course, define $h_0=\mathfrak{h}_1$ and $h_1=\mathfrak{h}_2$. Each $T_s$ can be identified with the torus $\partial \mathbb{D}\times\partial\mathbb{D}$ by the map
$$\big(\alpha_se^{2\pi i u},b_1e^{2\pi i v}\big)\mapsto (e^{2\pi i u},e^{2\pi i v});\; u,v\in\mathbb{R}.$$ Then we can think that $h_0$ and $h_1$ are in the class $\mathcal{H}$ of homeomorphisms of $\partial \mathbb{D}\times\partial\mathbb{D}$ preserving the foliation $dv-\lambda du=0$. Clearly it is sufficient to prove that $h_0$ and $h_1$ are included in a  continuous family $\{h_s\}_{s\in[0,1]}$ of homeomorphisms  in $\mathcal{H}$. We know that $h_0$ and $h_1$ lift to some  homeomorphisms $H_1,H_2\colon\mathbb{R}^2\to\mathbb{R}^2$, respectively.  On the other hand, recall that $h_0=\mathfrak{h}_1$ and $h_1=\mathfrak{h}_2$ induce the same map $\mathfrak{h}^*$ at homology level and let us define  $A(u,v)=(u,v)$ or $A(u,v)=(-u,-v)$ according to $\mathfrak{h}^*$ is the identity or the inversion map. Then there exist continuous functions $\kappa_0,\kappa_1\colon\mathbb{R}^2\to\mathbb{R}$ such that (see Lemma \ref{ass})
$$H_j=H_j(0,0)+A(u,v)+\kappa_j(u,v)\cdot (1,\lambda); \; j=0,1.$$ Now, it is not difficult to see that  $$H_s=(1-s)H_0+sH_1; \, s\in[0,1]$$ induce  a continuous family in $\mathcal{H}$. 
\end{proof}
Set $D=\{|x|\le a_1,|y|\le b_1\}$ and $\widetilde{D}=\{|\tilde{x}|\le \tilde{a}_1,|\tilde{y}|\le \tilde{b}_1\}$ and define $\mathfrak{h}_0\colon\partial D\to\partial\widetilde{D}$ as 
\begin{align*} 
\mathfrak{h}_0 &= \mathfrak{h}_1, \textrm{ on } \{|x|=a_1,|y|\le b_1\}; \\
\mathfrak{h}_0 &=\mathfrak{h}_2,\textrm{ on }\{|x|\le a_2,|y|=b_1\};\\
\mathfrak{h}_0 &=h_s, \textrm{ on } T_s,\, s\in [0,1].
\end{align*}
It is easy to see that $\mathfrak{h}_0$ is a homeomorphism conjugating the one-dimensional foliations induced by $\mathcal{F}$ on $\partial D$ and $\widetilde{\mathcal{F}}$ on $\partial\widetilde{D}$. Then, by the conical structure of nodal singularities we can extend $\mathfrak{h}_0$ as a homeomorphism between $D$ and $\widetilde{D}$ mapping leaves of $\mathcal{F}$ to leaves of $\widetilde{ \mathcal{F}}$. Finally, it is easy to see that the map $\bar{\mathfrak{h}}$ defined as 
\begin{align*} 
\bar{\mathfrak{h}} &= \mathfrak{h}_1, \textrm{ on } V_1, \\
\bar{\mathfrak{h}} &=\mathfrak{h}_2,\textrm{ on }V_2, \textrm{ and}\\
\bar{\mathfrak{h}} &=\mathfrak{h}_0, \textrm{ on } D
\end{align*} defines a topological equivalence between $\mathcal{F}$ and $\widetilde{\mathcal{F}}$ extending to the nodal corner singularity $p$. Moreover,
close to the divisor and outside $$\{|x|\le\epsilon,|y|\le\epsilon\}\cup\mathfrak{h}^{- 1}(\{|\tilde{x}|\le\epsilon,|\tilde{y}|\le\epsilon\})$$ we have $\bar{\mathfrak{h}}=\mathfrak{h}$. This last property permit us to  repeat finitely many times the construction  above    to obtain a topological equivalence satisfying the requirements of Theorem \ref{nodalext}.

\end{document}